\documentclass[11pt,reqno]{amsart}

\usepackage[margin=1.15in]{geometry}
\usepackage[utf8]{inputenc}
\usepackage[T1]{fontenc}
\usepackage{amsmath,amssymb,amsfonts,mathtools}
\usepackage{enumitem}
\usepackage[colorlinks=true,linkcolor=blue,citecolor=blue,urlcolor=blue]{hyperref}
\usepackage[nameinlink,noabbrev]{cleveref}
\usepackage{bbm}

\title{Classification of blow-ups for the Alt--Phillips problem in three dimensions}
\author{Xavier Fern\'andez-Real}
\address{EPFL SB, Station 8, 1015 Lausanne, Switzerland}
\email{xavier.fernandez-real@epfl.ch}
\thanks{The author was supported by the Swiss National Science Foundation (SNF grant PZ00P2\_208930), by the Swiss State Secretariat for Education, Research and Innovation (SERI) under contract number MB22.00034, and by the AEI project PID2021-125021NA-I00 (Spain).}
\date{}

\subjclass[2020]{35R35, 35B08, 35J61, 35B65}
\keywords{Alt--Phillips problem, free boundary problem, stable cones, homogeneous blow-ups, semilinear elliptic equations}

\newtheorem{thm}{Theorem}[section]
\newtheorem{lem}[thm]{Lemma}
\newtheorem{cor}[thm]{Corollary}
\newtheorem{prop}[thm]{Proposition}

\theoremstyle{definition}
\newtheorem{defn}[thm]{Definition}

\theoremstyle{remark}
\newtheorem{remark}[thm]{Remark}

\crefname{thm}{Theorem}{Theorems}
\crefname{lem}{Lemma}{Lemmas}
\crefname{cor}{Corollary}{Corollaries}
\crefname{prop}{Proposition}{Propositions}
\crefname{claim}{Claim}{Claims}
\crefname{defn}{Definition}{Definitions}
\crefname{notation}{Notation}{Notations}
\crefname{remark}{Remark}{Remarks}

\numberwithin{equation}{section}

\newcommand{\R}{\mathbb R}
\newcommand{\Sn}{\mathbb S^2}
\newcommand{\Om}{\Omega}

\newcommand{\apalpha}{\alpha}

\newcommand{\lamax}{\lambda_{\max}}
\newcommand{\lamin}{\lambda_{\min}}

\newcommand{\Jh}{\mathcal J_{\!h}}
\newcommand{\Lin}{\mathcal L}
\newcommand{\HH}{\mathsf A}
\newcommand{\grad}{\nabla}
\newcommand{\tr}{\operatorname{tr}}
\newcommand{\dist}{\operatorname{dist}}
\newcommand{\supp}{\operatorname{supp}}
\newcommand{\eps}{\varepsilon}

\begin{document}

\begin{abstract}
We prove a flatness theorem for classical stable homogeneous solutions of the $\gamma$-Alt--Phillips free boundary problem in three dimensions in the range $0<\gamma\le2/3$, where $\gamma$ is the exponent in the energy density $|\nabla u|^2+u^\gamma$. In particular, this implies full regularity of the free boundary for minimizers of the corresponding Alt--Phillips energy in dimension $3$.
\end{abstract}

\maketitle

\tableofcontents

\section{Introduction}\label{sec:introduction}
Free boundary regularity is closely tied to the classification of homogeneous solutions.  In one-phase variational problems, blow-ups at singular points are cones, and stable or minimizing cones are the natural obstructions to flatness and higher regularity.  This perspective goes back to the Alt--Caffarelli one-phase or Bernoulli  problem \cite{AltCaffarelli1981ExistenceRegularityOnePhase} and to subsequent monotonicity and blow-up methods for elliptic free boundaries (see also \cite{Weiss1998PartialRegularityEllipticFreeBoundary}, and the expository works \cite{Vel23, CS05, Fer26}). For the Bernoulli problem, stable-cone rigidity in low dimension was developed in particular by Jerison--Savin \cite{JerisonSavin2015StableConesOnePhase} (see also, \cite{CaffarelliJerisonKenig2004GlobalMinimizersFreeBoundary, DeSilva2009SingularEnergyMinimizing}).

The present paper concerns the semilinear one-phase problem introduced by Alt--Phillips \cite{Phillips1983MinimizationFreeBoundary, AltPhillips1986FreeBoundarySemilinear}.  For $0<\gamma<1$, the functional we consider has the form
\[
  \mathcal E_\gamma(u,D) =\int_{  D}\left(|\grad u|^2+u^\gamma\mathbbm{1}_{\{u > 0\}}\right)\,dx.
\]
In the smooth free-boundary regime, its Euler--Lagrange system consists of a semilinear equation in $\{u > 0\}$ that is singular at the zero phase  ($\Delta u = \frac{\gamma}{2} u^{\gamma-1}$ in $\{u > 0\}$)  and a degenerate Neumann condition on the free boundary ($|\nabla u| = 0 $ on $\partial \{u > 0\}$); the precise normalization used below is recalled in Section~\ref{sec:preliminaries}.  The natural scaling exponent is
\[
  \beta=\frac{2}{2-\gamma},
\]
so Alt--Phillips blow-ups are $\beta$-homogeneous.  It is therefore convenient to pass to the one-homogeneous variable
\[
  v=\beta u^{1/\beta}, \qquad u=\left(\frac v\beta\right)^\beta .
\]
If
\[
  \apalpha=\beta\gamma=\frac{2\gamma}{2-\gamma},
\]
then the transformed profile satisfies
\[
  \Delta v=\frac{\apalpha}{2}\frac{1-|\grad v|^2}{v} \quad\text{in}\quad \{v>0\}, \qquad\qquad   |\grad v|=1 \quad\text{on}\quad \partial\{v > 0\}.
\]
The range considered in this paper,
\[
  0<\gamma\le \frac23,
\]
is exactly the range $0<\apalpha\le 1$.  In this regime we prove that stable three-dimensional homogeneous classical cones are necessarily flat. This is the first classification result for the Alt--Phillips problem in dimension 3 (whereas dimension 2 is already known, \cite{AltPhillips1986FreeBoundarySemilinear}).

\begin{thm}\label{thm:stable-homogeneous-cone-flatness}
Let $0<\gamma\le 2/3$, $\beta=\tfrac{2}{2-\gamma}$, and let $u$ be a classical stable homogeneous solution to the $\gamma$-Alt--Phillips problem in $\R^3$ (see Definitions~\ref{def:classical-homogeneous-ap-cone} and \ref{def:stability}). Then, there exists $e\in\Sn$ such that
\[
  u(x)=\beta^{-\beta}(x\cdot e)_+^\beta .
\]
\end{thm}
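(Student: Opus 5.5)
We follow the Jerison--Savin strategy for stable Bernoulli cones, carried out in the one-homogeneous variable $v=\beta u^{1/\beta}$. The target is an integral identity obtained by testing the stability inequality (Definition~\ref{def:stability}) against a suitable function built from the second fundamental form of the level sets of $v$; in dimension $3$ this identity should force $D^2v\equiv0$ on $\{v>0\}$. By homogeneity, $v$ is one-homogeneous, smooth in the cone $\{v>0\}$, satisfies $\Delta v=\frac{\apalpha}{2}\frac{1-|\grad v|^2}{v}$, and has $|\grad v|=1$ on the free boundary, which is smooth away from the origin because $u$ is classical.

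\textbf{Step 1 (linearization and stability).} We write the second variation of $\mathcal E_\gamma$ in terms of $v$. This gives a linearized operator $\Lin$ acting on perturbations $\varphi$ in $\{v>0\}$, together with a boundary term on $\partial\{v>0\}$ that involves the mean curvature $H$ of the free boundary. Stability then reads
\[
\int \Lin\text{-energy of }\varphi\ \ge 0 \qquad\text{for all compactly supported } \varphi \text{ away from } 0.
\]
We also record the two basic Jacobi-type fields. Translations give $\partial_e v$, which solves the linearized equation. The dilation structure gives $x\cdot\grad v-v=0$, so it carries no information.

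\textbf{Step 2 (a subsolution built from curvature).} Following Jerison--Savin, we set
\[
c^2=|D^2v|^2+\text{(lower order correction)},
\]
and look for a Bochner/Simons-type inequality of the form
\[
\Lin c\ \ge\ \frac{\delta\,|\grad c|^2}{c}-\text{(error)}
\]
in $\{v>0\}$. On the free boundary we want a sign condition on $\partial_\nu c$ coming from $|\grad v|=1$ and the equation; here the degenerate right-hand side $\frac{1-|\grad v|^2}{v}$ must be controlled near $\partial\{v>0\}$. The new ingredient compared with Bernoulli is the $\apalpha$-dependent terms arising from differentiating $\frac{\apalpha}{2}(1-|\grad v|^2)/v$ twice. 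We would combine them using the improved Kato inequality in dimension $3$ (in the $2$-dimensional level sets). We expect the restriction $\apalpha\le1$ to be exactly what makes the resulting quadratic form in the entries of $D^2v$ nonnegative.

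\textbf{Step 3 (conclusion).} We plug $\varphi=c\,\eta$ into stability, where $\eta=\eta(|x|)$ is the standard logarithmic cutoff on $\{\epsilon<|x|<1/\epsilon\}$. Since $c$ is $(-1)$-homogeneous, the cutoff errors are $\lesssim \int c^2|\grad\eta|^2$, which in $\R^3$ behaves like $\int r^{-2}r^{2}|\grad \eta|^2\,dr\to0$ for the log cutoff, because $n=3$ is exactly the borderline. Combining this with Step 2 yields $\int \delta|\grad c|^2\eta^2\le o(1)$. Hence $c$ is constant on the cone, and by homogeneity $c\equiv0$. Therefore $D^2v\equiv0$, so $v$ is affine, and $|\grad v|=1$ together with $v=0$ on the boundary gives $v=(x\cdot e)_+$. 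Transforming back gives $u=\beta^{-\beta}(x\cdot e)_+^\beta$.

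\textbf{Main obstacle.} The hardest step is Step 2: finding the right combination $c$, which may need weights in $v$ or $|\grad v|$, so that both the interior inequality and the free-boundary sign hold with the singular $1/v$ terms. We would first try $c^2=|D^2v|^2+k\,\apalpha(1-|\grad v|^2)^2/v^2$ and tune $k$, checking positivity of the resulting quadratic form for $\apalpha\le1$.
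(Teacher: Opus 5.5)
Your overall architecture (a Jerison--Savin test function built from $D^2v$, inserted into the weighted stability inequality) is the one the paper uses. However, the step you treat as routine would fail, and the step you flag as hard is the entire proof.

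\textbf{Step 3 fails as written.} In the variable $v$ the stability inequality is $\int v^\apalpha(|\grad\varphi|^2-Q\varphi^2)\ge0$. It has no boundary term, since $v^\apalpha$ vanishes on $\partial\Om$ and $v^\apalpha Q\in L^1_{\rm loc}$. So no sign of $\partial_\nu c$ is needed, only Lipschitz regularity of the test function up to $\partial\Om$. You dropped the weight $v^\apalpha$, and in any case the cutoff error does not vanish. For $c=r^{-1}F(\omega)$ and $v=r\psi(\omega)$,
\[
\int v^\apalpha c^2|\grad\eta|^2\,dx=\Bigl(\int_{\Om_S}\psi^\apalpha F^2\,d\omega\Bigr)\int_0^\infty r^{\apalpha}|\eta'|^2\,dr .
\]
If $\eta$ vanishes near $0$ and equals $1$ on $[1,2]$, Cauchy--Schwarz gives $\int_0^1 r^\apalpha|\eta'|^2\,dr\ge\bigl(\int_0^1 r^{-\apalpha}\,dr\bigr)^{-1}=1-\apalpha>0$ for $\apalpha<1$. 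Without the weight, the log cutoff even makes $\int|\eta'|^2\,dr$ blow up at the origin. So dimension $3$ is not borderline here, and a qualitative inequality $\Lin c\ge\delta|\grad c|^2/c$ cannot give $\grad c\equiv0$.

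What is needed is the quantitative Hardy mechanism of Proposition~\ref{prop:stability-hardy-reduction}. A nonnegative $G$, homogeneous of degree $-p$, with $\Lin G\ge\mu G/r^2$ forces $\mu\le\lambda_{\apalpha,p}=(1+\apalpha-2p)^2/4$. For $G=c$, using $|\grad c|\ge|\partial_r c|=c/r$, you would need $\delta>(1-\apalpha)^2/4$; in general the power $p$ must be tuned.

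\textbf{Step 2 is missing.} The difficulty is not only gradient terms that a Kato-type inequality could absorb, but singular zero-order terms. For example, $\Jh(\mathcal A_{22})$ contains $-\frac{\apalpha}{v}\lambda_M^2$, which blows up with the wrong sign at the free boundary. The paper controls this with three ingredients your plan lacks:
\begin{itemize}
\item the Modica-type bound $|\grad v|\le1$, which gives $Q\ge0$, $T=r\Delta v>0$, $H>0$, and the boundary values $\lambda_M\to h$, $\lambda_m\to-h/(1+\apalpha)$;
\item the pinching $\lambda_M+(1+\apalpha)\lambda_m>0$, proved by a maximum principle for $T/\lambda_M$;
\item the choice $G=\Lambda^p$ with $\Lambda=\lamax(D^2v)$, together with a viscosity argument where $\lambda_M=\lambda_m$.
\end{itemize}
With this choice, at $r=1$ the potential term of $\Lin$ combines with $-\frac{\apalpha}{v}\lambda_M^2$ into $\frac{(1-p)T-p\apalpha\lambda_M}{v}\lambda_M^p$. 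This is nonnegative by pinching provided $p\le1/(2+\apalpha)$, while the gradient terms need $p\ge\apalpha/(2+\apalpha)$. The result is $\Lin G\ge p(1+p)G/r^2$.

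The contradiction $p(1+p)>\lambda_{\apalpha,p}$ is equivalent to $p(2+\apalpha)>(1+\apalpha)^2/4$, which is achievable in that window if and only if $\apalpha<1$. So $\apalpha\le1$ enters through this window and the Hardy constant, not through positivity of a quadratic form as you anticipate. At $\apalpha=1$ the comparison is an exact equality. There one must keep the strictly positive remainder $\frac{F}{3\psi}(2T-\lambda_M)$, which is positive by pinching, to conclude.
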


Thus, under the hypotheses of the theorem, the only stable cone is the half-space profile.  The classical assumption corresponds to regularity away from the vertex; in particular, it excludes the two-sided profile $|x\cdot e|$ from the one-phase class. Also, the restriction $0<\gamma\le 2/3$ is consistent with recent evidence for the special role of the threshold $\gamma=\tfrac23$ \cite{KarakhanyanSanzPerela2026StableConesAltPhillips,AllenKriventsovShahgholian2025OscillatingSingularTerms,RestrepoRosOton2025CinftyRegularitySemilinear}; moreover, the counterexamples in \cite{SavinYu2025StableMinimizingConesAltPhillips} for $\gamma$ close to $1$ show that one cannot expect such a statement throughout the whole range $0<\gamma<1$. Notice, for instance, that the problem is, in a way, \emph{less} convex for this range of $\gamma$ (see \eqref{eq:venergy}, where $0<\apalpha < 1$). 

As a classical consequence, we obtain the regularity of free boundaries:
\begin{cor}
\label{cor:free-boundary-regularity}
Let $n \ge 2$ and $0 < \gamma \le 2/3$. Let $ u$ be a minimizer of $\mathcal E_\gamma(u, B_1)$ with $B_1\subset \R^n$. Then, if $n \le 3$, the free boundary $\partial\{u > 0\}$ is a smooth $C^\infty$ manifold in $B_1$. Moreover, if $n \ge 4$, the set of singular free boundary points satisfies  ${\rm dim}_{\mathcal{H}}({\rm Sing}(u))\le n - 4$. 
\end{cor}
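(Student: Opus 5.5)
The corollary follows from Theorem~\ref{thm:stable-homogeneous-cone-flatness} through the standard blow-up and dimension-reduction scheme (Weiss monotonicity, an $\varepsilon$-regularity theorem, Federer reduction). We take as known the facts for minimizers of $\mathcal E_\gamma$ from \cite{AltPhillips1986FreeBoundarySemilinear}: optimal $C^{1,\beta-1}$-type regularity and $\beta$-homogeneous growth $u(x)\sim \dist(x,\partial\{u>0\})^\beta$, nondegeneracy, and compactness of rescalings $u_r(x)=r^{-\beta}u(x_0+rx)$ in $C^1_{\rm loc}$ together with convergence of the positivity sets in Hausdorff distance. We also use the Weiss-type monotonicity formula for the $\beta$-homogeneous balanced energy, which implies that every blow-up is a $\beta$-homogeneous global minimizer. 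Finally, we use the flatness-implies-regularity result: if $u$ is close to $\beta^{-\beta}(x\cdot e)_+^\beta$ in $B_1$, then $\partial\{u>0\}\cap B_{1/2}$ is $C^{1,\alpha}$. Higher regularity up to $C^\infty$ then comes from the hodograph/bootstrap results cited in the introduction \cite{RestrepoRosOton2025CinftyRegularitySemilinear}. A free boundary point is called regular if some blow-up is a half-space profile. By $\varepsilon$-regularity, the regular set is relatively open and smooth, and every blow-up there is flat. So it suffices to estimate the singular set.

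\emph{Step 1: $n=2,3$.} Let $x_0$ be a free boundary point and $u_0$ a blow-up. Then $u_0$ is a homogeneous global minimizer, hence stable. The key point is that $u_0$ is \emph{classical} in the sense of Definition~\ref{def:classical-homogeneous-ap-cone}, i.e.\ smooth away from the vertex. To see this, restrict to $\Sn$ and take any $y\neq0$ on $\partial\{u_0>0\}$. A blow-up of $u_0$ at $y$ is invariant in the direction $y$, so it reduces to a homogeneous minimizer in one dimension fewer. In $\R^2$ such cones are flat (two dimensions is known from \cite{AltPhillips1986FreeBoundarySemilinear}), so $y$ is a regular point, and $\varepsilon$-regularity makes $\partial\{u_0>0\}\setminus\{0\}$ smooth. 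Theorem~\ref{thm:stable-homogeneous-cone-flatness} now applies and gives $u_0=\beta^{-\beta}(x\cdot e)_+^\beta$. Hence every free boundary point is regular, and $\partial\{u>0\}$ is $C^\infty$ in $B_1$. For $n=2$ the same argument works directly using the two-dimensional classification.

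\emph{Step 2: $n\ge4$.} We apply Federer's dimension reduction to the set ${\rm Sing}(u)$, using upper semicontinuity of the Weiss density and compactness of minimizers. If $\mathcal H^{s}({\rm Sing}(u))>0$ with $s>n-4$, then by iterated blow-ups at density points of the singular set we obtain a homogeneous minimizer in $\R^n$ whose singular set contains a linear subspace of dimension $\ge n-3$. Splitting off that subspace produces a singular homogeneous minimizer in $\R^3$. Its free boundary is smooth away from the origin by Step 1, so it is a classical stable cone. The theorem then says it is flat, which is a contradiction. Therefore $\dim_{\mathcal H}({\rm Sing}(u))\le n-4$.

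The main obstacle is the technical input rather than the logic. One needs to verify that minimizers (and their blow-ups) satisfy the stability notion of Definition~\ref{def:stability}, since second variations in the singular $u^\gamma$ term require care near the free boundary. One also needs the splitting and compactness statements for cones. I would first prove stability for the transformed variable $v$ by computing second variations along domain deformations supported away from the vertex. This is legitimate precisely because of the classical regularity established in Step 1.
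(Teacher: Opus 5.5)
Your proposal is correct and follows the paper's route: the paper's proof just calls the corollary a classical consequence of Theorem~\ref{thm:stable-homogeneous-cone-flatness}, via the standard blow-up, $\varepsilon$-regularity and higher-regularity theory (Alt--Phillips, De Silva--Savin, Restrepo--Ros-Oton) plus the dimension-reduction bound in Fern\'andez-Real--Yu, Appendix~A, and your Steps 1--2 unpack exactly that. The stability issue you flag as the main obstacle is not really one: a global minimizer is stable in the inner-variation sense of Definition~\ref{def:stability} immediately, because $u\circ(\mathrm{Id}+\varepsilon\xi)$ is an admissible competitor, and the passage to the inequality for $v$ is already built into the theorem through Proposition~\ref{prop:alt-phillips-stability-inequality}.
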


We remark that thanks to our analysis we are also able to characterize the range of $\gamma$ for which cones with full support are not admissible stable solutions; see Proposition~\ref{prop:full-support-exclusion}.

\medskip

The Alt--Phillips problem has recently been revisited from several related viewpoints.  De Silva--Savin developed a viscosity framework for degenerate one-phase problems that includes the transformed Alt--Phillips equation \cite{DeSilvaSavin2021DegenerateOnePhase}. Generic regularity was established by the author and Yu in \cite{FernandezRealYu2023GenericPropertiesFreeBoundary}.  Restrepo--Ros-Oton proved smoothness of regular free boundaries for semilinear problems of this type \cite{RestrepoRosOton2025CinftyRegularitySemilinear} (see also \cite{carducci2026smoothness}).  The stable-cone formulation and the transformed stability inequality used here are due to Karakhanyan--Sanz-Perela \cite{KarakhanyanSanzPerela2026StableConesAltPhillips}; related stable and minimizing Alt--Phillips cones have also been studied by Savin--Yu \cite{SavinYu2025StableMinimizingConesAltPhillips} (see also \cite{SavinYu2026ConcentrationConesAltPhillips}).  These results place the Alt--Phillips theory in parallel with the classical Bernoulli theory, while also showing that the singular weight in the transformed equation creates new parameter-dependent phenomena.

\subsection{Ideas of the proof}
The stability inequality is first reduced to a sharp radial Hardy obstruction.  If a nonnegative homogeneous test function $G$, of degree $-p$, satisfies
\[
  \Lin G\ge \mu\frac{G}{r^2},
\]
 in the positive phase (where $\mathcal{L}$ is the linearized operator, \eqref{eq:linearized-operator}), then stability forces
\[
  \mu\le \lambda_{\apalpha,p}, \qquad \lambda_{\apalpha,p}=\frac{(1+\apalpha-2p)^2}{4}.
\]
This reduction immediately rules out full-support stable cones in certain ranges (see Proposition~\ref{prop:full-support-exclusion}).  It is the only global input needed later; the rest of the argument is a pointwise and distributional analysis on a classical cone.

For classical cones, the main object is the scale-invariant Hessian
\[
  \HH=rD^2v.
\]
The flat half-space solution is characterized by $\HH\equiv0$.  We first obtain the global gradient bound $|\grad v|\le1$, and then prove positivity of the mean curvature of the regular free boundary.  In the interior, a collection of Euclidean Hessian identities for the transformed equation yields a pinching inequality for the eigenvalues of $\HH|_{x^\perp}$.  Writing
\[
  \lambda_M=\lamax(\HH|_{x^\perp}), \qquad \lambda_m=\lamin(\HH|_{x^\perp}), \qquad T=\lambda_M+\lambda_m,
\]
the pinching takes the form
\[
  \lambda_M+(1+\apalpha)\lambda_m>0, \qquad\text{equivalently}\qquad T> \frac{\apalpha}{1+\apalpha}\lambda_M.
\]
In a way, this estimate is a sharper version, in the singular Alt--Phillips setting, of the curvature positivity used in the Bernoulli cone argument.

The final step is a Jerison--Savin type test based on the largest Euclidean Hessian eigenvalue
\[
  \Lambda=\lamax(D^2v)=\frac{\lambda_M}{r}.
\]
For suitable $p$, the function $G=\Lambda^p$ satisfies the distributional inequality
\[
  \Lin G\ge p(1+p)\frac{G}{r^2}.
\]
For $0<\apalpha<1$, the sharp Hardy constant $\lambda_\apalpha=(1+\apalpha)^2/4$ is strictly smaller than one, and one can choose
\[
  \frac{\lambda_\apalpha}{2+\apalpha}<p<\frac{1}{2+\apalpha}.
\]
For this choice, $p(1+p)>\lambda_{\apalpha,p}$, contradicting the Hardy obstruction unless $\HH\equiv0$. The endpoint case $\alpha = 1$ is done separately. 

The paper is organized as follows.  In Section~\ref{sec:preliminaries} we fix the normalization, record the transformed equation and stability inequality, and introduce the homogeneous Hessian notation.  In Section~\ref{sec:reduction}, we prove the Hardy reduction and use it to exclude full-support cones.  The positivity of the free-boundary mean curvature is established in Section~\ref{sec:positive-mean-curvature}.  The Hessian identities and pinching estimate are proved in Section~\ref{sec:classical-cone-identities}, and specific properties of the test function chosen are shown in Section~\ref{sec:properties-test-functions}, which together with the Hardy reduction, complete the proof of Theorem~\ref{thm:stable-homogeneous-cone-flatness}.

\section{Preliminaries}\label{sec:preliminaries}
In general we will work with $0<\gamma<1$, unless explicitly stated otherwise, and
\[
  \beta=\frac{2}{2-\gamma}, \qquad \apalpha=\beta\gamma = \frac{2\gamma}{2-\gamma}.
\]
In particular $0<\apalpha\le1$ when $0 < \gamma \le 2/3$. 

We start with the definition of   (classical) $\gamma$-Alt--Phillips solution (cf. \cite[Definition 2.2]{SavinYu2025StableMinimizingConesAltPhillips}) for $\gamma\in (0, 1)$. The same definition extends to lower regularity contexts in the viscosity formulation as in \cite{DeSilvaSavin2021DegenerateOnePhase}. It corresponds, in the smooth free boundary setting, to the criticality condition (or vanishing of the first inner variation) for the Alt--Phillips energy for $u \ge 0$:
\begin{equation}
  \label{eq:alt-phillips-energy} \mathcal{E}_\gamma(u, D) = \int_{D} \left(|\nabla u|^2 + u^\gamma \mathbbm{1}_{\{u > 0\}}\right) \, dx = \int_{\{u > 0\}\cap D} \left(|\nabla u|^2 + u^\gamma \right) \, dx.
\end{equation}
\begin{defn}\label{def:classical-homogeneous-ap-cone}
Let $n \ge 2$,  $D \subset \R^n$, and $\gamma \in (0, 1)$. We say that $u \ge 0$, $u\in C(D)$, is a \emph{classical solution to the $\gamma$-Alt--Phillips problem in $D$} if
\[
\left\{
    \begin{array}{rcll}
        \Delta u& = & \tfrac{\gamma}{2} u^{\gamma-1}&\quad\text{in}\quad \{u > 0\}\cap D,\\
        \tfrac{|\nabla u|^2}{u^\gamma} &=&1&\quad\text{on}\quad \partial\{u > 0\}\cap D,
    \end{array}
    \right.
\]
where $\{u > 0\}$ is locally around $\partial\{u >0\}$ the subgraph of a smooth function (i.e., $\{u > 0\}$ is a smooth domain).

We say that $u\in C(\R^n)$ is a \emph{classical cone} for the $\gamma$-Alt--Phillips problem if $D = \R^n$, $u$ is $\beta$-homogeneous, and $u$ is a classical solution to the $\gamma$-Alt--Phillips problem in $\R^n\setminus \{0\}$.
\end{defn}

We also define stable solutions (cf. \cite{KarakhanyanSanzPerela2026StableConesAltPhillips}):

\begin{defn}\label{def:stability}
Let $n \ge 2$,  $D \subset \R^n$, and $\gamma \in (0, 1)$. Let $u$ be a classical solution to the $\gamma$-Alt--Phillips problem in $D$. We say it is \emph{stable} if second-order inner variations are nonnegative; i.e.,
\[
  \lim_{\eps \downarrow 0} \frac{\mathcal{E}_\gamma(u_\eps, D)-\mathcal{E}_\gamma(u, D)}{\eps^2}\ge 0,\quad u_\eps(x) = u\circ ({\rm Id}+\eps \xi(x)), \quad\text{for all}\quad \xi \in C^\infty_c( D; \R^n).
\]
If $u$ is a classical cone for the $\gamma$-Alt--Phillips problem, we say it is stable if it satisfies the previous expression for variations compactly supported in $\R^n\setminus \{0\}$.
\end{defn}

\subsection{The transformed equation and stability}
We work in the transformed variable 
\[
v=\beta u^{1/\beta}.
\]

\begin{prop}[Transformed solution]
\label{prop:transformed-equation}
Let $n \ge 2$, $\gamma\in (0, 1)$. Let $u$ be a classical cone for the $\gamma$-Alt--Phillips problem in $\R^n$, and set
\[
  v=\beta u^{1/\beta}, \qquad u=\left(\frac v\beta\right)^\beta .
\]
Then, $v\in C(\R^n)$, $v \ge 0$ is 1-homogeneous, $\{u > 0 \} = \{v > 0\}$,  and satisfies
\[
\left\{
    \begin{array}{rcll}
        \Delta v& = & \frac{\apalpha}{2}\frac{1-|\grad v|^2}{v}&\quad\text{in}\quad \{v > 0\},\\
        |\nabla v|^2 &=& 1&\quad\text{on}\quad \partial\{v > 0\}.
    \end{array}
    \right.
\]
\end{prop}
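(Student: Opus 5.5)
The argument is a direct computation with the change of variables $u=(v/\beta)^\beta$, done separately in the positive phase and on the free boundary. The qualitative properties come first. Since $u\ge0$ is continuous, $v=\beta u^{1/\beta}$ is continuous and nonnegative, and $\{v>0\}=\{u>0\}$. Because $u$ is $\beta$-homogeneous, $u(tx)^{1/\beta}=t\,u(x)^{1/\beta}$, so $v$ is $1$-homogeneous. Inside $\{u>0\}$ the function $u$ is smooth (classical solution, with smooth right-hand side where $u>0$), and the map $s\mapsto \beta s^{1/\beta}$ is smooth on $(0,\infty)$; hence $v$ is smooth in $\{v>0\}$.

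Next comes the interior equation. Writing $u=\beta^{-\beta}v^\beta$, we compute
\[
\nabla u=\beta^{1-\beta}v^{\beta-1}\nabla v,\qquad
\Delta u=\beta^{1-\beta}v^{\beta-1}\Big(\Delta v+(\beta-1)\frac{|\nabla v|^2}{v}\Big).
\]
The right-hand side of the equation for $u$ is $\tfrac\gamma2 u^{\gamma-1}=\tfrac\gamma2\beta^{-\beta(\gamma-1)}v^{\beta(\gamma-1)}$. From $\beta=\tfrac{2}{2-\gamma}$ we get $\beta(\gamma-1)=\beta-2$ and $\beta-1=\tfrac{\gamma}{2-\gamma}=\tfrac{\apalpha}{2}$. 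Comparing the powers of $v$ and of $\beta$, the equation becomes
\[
\Delta v+\frac{\apalpha}{2}\frac{|\nabla v|^2}{v}=\frac{\gamma\beta}{2}\frac1v=\frac{\apalpha}{2}\frac1v,
\]
which is the stated equation. The bookkeeping of constants is the only point needing care: the exponents of $\beta$ are $1-\beta$ on the left and $-\beta(\gamma-1)=2-\beta$ on the right, so one factor of $\beta$ survives and combines with $\gamma/2$ to give $\apalpha/2$.

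Finally, the free boundary condition. In $\{u>0\}$ we have $|\nabla u|^2/u^\gamma=\beta^{2-2\beta}v^{2\beta-2}|\nabla v|^2\cdot\beta^{\beta\gamma}v^{-\beta\gamma}$. Since $2\beta-2-\beta\gamma=\beta(2-\gamma)-2=0$ and $2-2\beta+\beta\gamma=0$, this gives the identity $|\nabla u|^2/u^\gamma=|\nabla v|^2$ in $\{v>0\}$.

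The main subtlety is to interpret the boundary condition correctly, as the limit of this quotient from inside the positive phase. One then has to check that $v$ extends $C^1$ up to the smooth free boundary, so that $|\nabla v|^2=1$ holds there in the classical sense. I would argue as follows. The classical free boundary condition says that $|\nabla u|^2/u^\gamma\to1$ at $\partial\{u>0\}$, so $|\nabla v|\to1$ there. Moreover, $\partial\{v>0\}$ is smooth, and $v$ solves a uniformly elliptic equation (with $v^{-1}$ singular coefficient) away from it. Hence $|\nabla v|$ is bounded near the free boundary, $v$ is Lipschitz, and $v$ vanishes on $\partial\{v>0\}$. If one wants more, one can invoke the regularity theory for this degenerate problem (e.g. \cite{DeSilvaSavin2021DegenerateOnePhase,RestrepoRosOton2025CinftyRegularitySemilinear}) to get $v$ smooth up to the boundary. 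For the statement at hand, the limit identity $|\nabla v|^2=1$ on $\partial\{v>0\}$ already suffices.
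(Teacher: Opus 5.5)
Your proposal is correct and is the same direct change-of-variables computation the paper relies on (the paper only says it is a computation and refers to Karakhanyan--Sanz-Perela). The identities $\beta(\gamma-1)=\beta-2$, $\beta-1=\alpha/2$ and $\beta(2-\gamma)=2$ give both the interior equation and $|\nabla u|^2/u^\gamma=|\nabla v|^2$ in $\{v>0\}$. Reading the free boundary condition as the limit of this quotient from the positive phase, as you do, is the right interpretation, so no further boundary regularity is needed for the statement.
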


\begin{proof}
This is the classical normalization for the problem and a computation yields it; see, for instance, \cite[Section 2.2]{KarakhanyanSanzPerela2026StableConesAltPhillips}.
\end{proof}

Of course, the transformation is not specific to conical solutions, and more in general, we have that the corresponding energy $\mathcal{E}_\gamma(u, D)$ in terms of $v$ is given by 
\begin{equation}
    \label{eq:venergy}
    {\mathcal{J}}_\gamma(v, D) =\int_{\{v > 0\}\cap D} v^\apalpha(|\nabla v|^2 +1). 
\end{equation}

We will work with stable solutions in the sense of inner variations as in Definition~\ref{def:stability}. By the recent work \cite{KarakhanyanSanzPerela2026StableConesAltPhillips}, we have the following stability inequality directly expressed in the transformed solution $v$.

\begin{prop}[Alt--Phillips stability inequality, {\protect \cite[Theorem 4.1]{KarakhanyanSanzPerela2026StableConesAltPhillips}}]
\label{prop:alt-phillips-stability-inequality}
Let $n \ge 2$, $\gamma\in (0, 1)$. Let $u$ be a classical stable cone for the $\gamma$-Alt--Phillips problem in $\R^n$, and let $v$ denote the transformed solution. Then,
\begin{equation}\label{eq:stability-inequality}
  \int_{\{v > 0\}} v^\apalpha
  \left(|\grad\varphi|^2
    -\frac{\apalpha}{2}\frac{1-|\grad v|^2}{v^2}\varphi^2
  \right)\,dx\ge0 \qquad\text{for all}\quad \varphi \in C^1_c(\R^n\setminus \{0\}).
\end{equation}
\end{prop}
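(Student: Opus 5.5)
The plan is to derive \eqref{eq:stability-inequality} from the second inner variation of $\mathcal E_\gamma$, written in the variable $v$ through the energy $\mathcal J_\gamma$ in \eqref{eq:venergy}. Away from the origin $v$ is smooth up to the smooth free boundary, and the free boundary conditions hold classically. This makes all integrations by parts legitimate once $\xi\in C^\infty_c(\R^n\setminus\{0\};\R^n)$.

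\textbf{Step 1: a normal-type test field.} Given $\varphi\in C^1_c(\R^n\setminus\{0\})$ (first smooth, then by density), I take the inner variation $\xi=-\varphi\,\grad v$. Here $\grad v$ is extended smoothly across the free boundary, where $|\grad v|=1$. Then $v\circ(\mathrm{Id}+\eps\xi)=v-\eps\varphi|\grad v|^2+O(\eps^2)$, which is a "vertical" perturbation $v+\eps\psi$ with $\psi=-\varphi|\grad v|^2$, corrected at second order. I expand $\mathcal J_\gamma$ to second order in $\eps$, using the change of variables $y=x+\eps\xi(x)$:
\[
  \mathcal J_\gamma(v_\eps)=\int_{\{v>0\}} v^\apalpha\big(|(D\Phi_\eps)^{-T}\grad v|^2+1\big)\det D\Phi_\eps\,dx,
\]
with $\Phi_\eps=\mathrm{Id}+\eps\xi$. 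The second-order term is an explicit quadratic form in $\xi$ and $D\xi$, weighted by $v^\apalpha$.

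\textbf{Step 2: reduction to the weighted quadratic form.} I substitute $\xi=-\varphi\grad v$ and use the interior equation $\Delta v=\frac\apalpha2\frac{1-|\grad v|^2}{v}$, equivalently $\operatorname{div}(v^\apalpha\grad v)=\frac\apalpha2 v^{\apalpha-1}(1+|\grad v|^2)$. The aim is to integrate by parts until the expression becomes
\[
  \int v^\apalpha\Big(|\grad\varphi|^2-\tfrac\apalpha2\tfrac{1-|\grad v|^2}{v^2}\varphi^2\Big)
\]
plus boundary terms on $\partial\{v>0\}$. A cleaner route is to note that the second variation in direction $\xi$ equals the second variation of $\mathcal J_\gamma$ along the vertical perturbation $\psi=-\varphi$, up to first-variation terms. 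Those first-variation terms vanish because $v$ is critical, which follows from the equation together with $|\grad v|=1$ on the boundary. One then linearizes the Euler--Lagrange operator. Its linearization $\mathcal L$ acting on $\varphi$ produces the potential $-\frac\apalpha2\frac{1-|\grad v|^2}{v^2}$, once one uses that $v^{\apalpha}$ is the natural weight and that $\frac{\partial}{\partial v}\big(\frac{\apalpha}{2}v^{\apalpha-1}(1+|\grad v|^2)\big)$ combines with the gradient terms.

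\textbf{Step 3: boundary terms (main obstacle).} The delicate point is the free boundary, where $v^\apalpha\to0$ but $\frac{1-|\grad v|^2}{v^2}$ is only borderline integrable against $v^\apalpha$. I would show that every boundary integral carries a factor $v^\apalpha$ or $(1-|\grad v|^2)$, and hence vanishes. For the second factor, the relation $1-|\grad v|^2=O(v)$ near the boundary is obtained from the smoothness of $v$ and from $|\grad v|=1$ there. I would also check integrability near $\partial\{v>0\}$ using $\apalpha>0$. Concretely, I plan to integrate over $\{v>\delta\}$ and let $\delta\to0$, controlling the flux terms $\int_{\{v=\delta\}}v^\apalpha\varphi\,\partial_\nu\varphi=O(\delta^\apalpha)$. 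Finally, nonnegativity of the second inner variation (Definition~\ref{def:stability}) yields \eqref{eq:stability-inequality}, as recorded in \cite[Theorem 4.1]{KarakhanyanSanzPerela2026StableConesAltPhillips}.
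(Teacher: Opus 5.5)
The paper gives no proof of this proposition; it imports it from Karakhanyan--Sanz-Perela (Theorem 4.1 there). If you only mean to quote that theorem, as your last sentence suggests, that is all the paper does. As a self-contained derivation, though, your outline has a genuine gap. The framework is right: an inner variation transversal to the level sets of $v$, pulled back to the fixed domain, truncated to $\{v>\delta\}$. But the step that carries all the content is not correctly justified.

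The ``cleaner route'' of Step~2 fails here: the inner second variation does \emph{not} equal the vertical one up to first-variation terms. The formal second derivative of $\mathcal J_\gamma$ along $v+\eps\varphi$ has integrand
\[
  \apalpha(\apalpha-1)v^{\apalpha-2}(1+|\grad v|^2)\varphi^2+4\apalpha v^{\apalpha-1}\varphi\,\grad v\cdot\grad\varphi+2v^\apalpha|\grad\varphi|^2 .
\]
For $0<\apalpha<1$ its first term is not integrable at $\partial\Om$, so the quantity is $-\infty$ as soon as $\varphi\not\equiv0$ on $\partial\Om$. For $\apalpha=1$ it is finite but differs from the correct value by $-2\int_{\partial\Om}\varphi^2$. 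The discrepancy is a genuinely second-order contribution of the moving free boundary.

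Concretely, set $v_\eps=v\circ\Phi_\eps^{-1}$, $w=\partial_\eps v_\eps|_{\eps=0}=-\xi\cdot\grad v$, and
$E_\delta(\eps)=\int_{\{v>\delta\}}v^\apalpha(|(D\Phi_\eps)^{-T}\grad v|^2+1)\det D\Phi_\eps\,dx=\int_{\{v_\eps>\delta\}}v_\eps^\apalpha(|\grad v_\eps|^2+1)\,dy$. Then $E_\delta''(0)\to E''(0)$ by dominated convergence.

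Computing $E_\delta''(0)$ produces two boundary integrals of size $\delta^{\apalpha-1}$:
\begin{itemize}
\item $\apalpha\delta^{\apalpha-1}\int_{\{v=\delta\}}w^2\frac{1+|\grad v|^2}{|\grad v|}$, from the motion of the level set $\partial\{v_\eps>\delta\}$;
\item $-2\apalpha\delta^{\apalpha-1}\int_{\{v=\delta\}}w^2|\grad v|$, from integrating by parts the cross term $2\apalpha v^{\apalpha-1}\grad v\cdot\grad(w^2)$.
\end{itemize}
Neither carries a factor $v^\apalpha$ or $1-|\grad v|^2$, contrary to your Step~3. Your plan only controls fluxes $\int_{\{v=\delta\}}v^\apalpha\varphi\,\partial_\nu\varphi$, so it does not see them.

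Only their sum, $\apalpha\delta^{\apalpha-1}\int_{\{v=\delta\}}w^2\frac{1-|\grad v|^2}{|\grad v|}=O(\delta^\apalpha)$, is harmless. This cancellation is exactly where $|\grad v|=1$ on $\partial\Om$ enters. Two further ingredients are needed:
\begin{itemize}
\item the interior identity $\apalpha(\apalpha-1)(1+P)-2\apalpha(\apalpha-1)P-\apalpha^2(1-P)=-\apalpha(1-P)$, which produces the potential;
\item the vanishing of the $\partial_\eps^2 v_\eps$-terms, by the equation in the interior and with $O(\delta^\apalpha)$ boundary flux.
\end{itemize}
Together these give $E''(0)=2\int_{\Om}v^\apalpha(|\grad w|^2-Qw^2)$.

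Two smaller points. First, $\xi=-\varphi\grad v$ yields $w=\pm\varphi|\grad v|^2$, not $\pm\varphi$. Use $\xi=-\varphi\grad v/|\grad v|^2$ instead; it is admissible because $x\cdot\grad v=v$ forces $|\grad v|\ge v/|x|>0$ in $\Om$, and $|\grad v|=1$ on $\partial\Om$. Second, your expansion is for $v\circ\Phi_\eps$ while your integral formula is for $v\circ\Phi_\eps^{-1}$. This is harmless, since the two differ by a first inner variation, which vanishes.
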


It will be convenient to abbreviate
\begin{equation}\label{eq:pq-definitions}
  P=|\grad v|^2,
  \qquad
  Q=\frac{\apalpha}{2}\frac{1-P}{v^2}=\frac{\Delta v}{v}.
\end{equation}
With this notation, stability reads as
\[
  \int_{\{v > 0\}} v^\apalpha\bigl(|\grad\varphi|^2-Q\varphi^2\bigr)\,dx\ge0 \qquad\text{for all}\quad \varphi \in C^1_c(\R^n\setminus \{0\}).
\]
\subsection{Regularity estimates}
The following result is due to \cite[Theorem 1.4]{DeSilvaSavin2021DegenerateOnePhase}, where it is stated in the more general framework of viscosity solutions:

\begin{prop}[Lipschitz bound]
\label{prop:lipschitz-bound}
Let $n \ge 2$, $\gamma\in (0, 1)$. Let $u$ be a classical solution for the $\gamma$-Alt--Phillips problem in $B_1\subset \R^n$, and let $v$ denote the transformed solution. Assume $\partial\{v > 0\}\cap B_{1/2}\neq \varnothing$. Then,
\[
  |\grad v|\le C\quad\text{in}\quad B_{3/4},
\]
for some $C$ depending only on $n$ and $\gamma$.
\end{prop}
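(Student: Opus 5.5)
The plan is to first prove the optimal growth bound $v\le C\,d$, where $d(x)=\dist(x,\partial\{v>0\})$, and then obtain the gradient bound from interior estimates after rescaling to unit scale. The problem is invariant under the $1$-homogeneous rescaling: if $v$ solves the transformed problem of \Cref{prop:transformed-equation}, so does $v_{x,r}(y)=v(x+ry)/r$. It is therefore enough to show that $v_{x,d(x)}$ has bounded gradient at the origin, uniformly in $x\in B_{3/4}\cap\{v>0\}$.

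\textbf{Step 1 (nondegeneracy).} From the equation,
\[
\Delta(v^2)=2|\grad v|^2+2v\Delta v=(2-\apalpha)|\grad v|^2+\apalpha\ \ge\ \apalpha>0
\]
in $\{v>0\}$, since $\apalpha<2$. Hence $v^2-\tfrac{\apalpha}{2n}|y-x|^2$ is subharmonic in $\{v>0\}$ and vanishes on the free boundary. The maximum principle on $B_r(x)\cap\{v>0\}$ then gives $\sup_{B_r(x)}v\ge c\,r$ for every $x\in\overline{\{v>0\}}$.

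\textbf{Step 2 (upper growth).} This is the step I expect to be the main obstacle. I would prove $v(x)\le C\,d(x)$ for $x\in B_{3/4}$ by working with $u=(v/\beta)^\beta$, which is subharmonic since $\Delta u=\tfrac\gamma2u^{\gamma-1}\ge0$. Suppose $v(x)=M d(x)$ with $M$ large, and let $x_0\in\partial\{v>0\}$ realize $d(x)$. Then a Harnack inequality for $u$ in $B_{d}(x)$ applies: the right-hand side $\tfrac\gamma2 u^{\gamma-1}$ is small relative to $u/d^2$ once $u\gtrsim M^\beta d^\beta$ with $M$ large. It gives $u\ge c M^\beta d^\beta$ on $B_{d/2}(x)$. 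Next I would compare $u$ from below in the annulus between $B_{d/2}(x)$ and $B_d(x)$ with a radial subsolution that vanishes on $\partial B_d(x)$ and is of size $cM^\beta d^\beta$ on $\partial B_{d/2}(x)$. Its normal derivative at $x_0$ is $\gtrsim M^\beta d^{\beta-1}$, which forces $|\grad u|^2/u^\gamma\to\infty$ as we approach $x_0$, contradicting the free boundary condition $|\grad u|^2=u^\gamma$ for $M$ large. If the direct barrier is delicate because the equation is singular near $\{u=0\}$, I would instead argue by contradiction and compactness: normalize blow-up sequences with $v(x_k)/d(x_k)\to\infty$, and use the viscosity framework of De Silva--Savin to derive a limit violating the free boundary condition. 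In either case the hypothesis $\partial\{v>0\}\cap B_{1/2}\ne\varnothing$, together with Step 1 and a chaining argument, controls $\sup_{B_{7/8}}v$ and hence makes the constant uniform.

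\textbf{Step 3 (interior gradient bound).} Fix $x\in B_{3/4}\cap\{v>0\}$ and set $w=v_{x,d(x)}$ on $B_1$. By Step 2, $w\le C$ on $B_1$. By Harnack applied to the subharmonic $u$-profile combined with Step 1, $w\ge c$ on $B_{1/2}$: the maximum of $w$ on $B_{1/2}$ is at least $c$, and one transfers this to a pointwise lower bound via a Harnack chain in the region where the equation is uniformly elliptic and non-singular. On $B_{1/2}$ the equation $\Delta w=\tfrac\apalpha2(1-|\grad w|^2)/w$ is then a uniformly elliptic quasilinear equation with quadratic gradient growth and bounded coefficients. Standard Bernstein, or Ladyzhenskaya--Ural'tseva, gradient estimates for $\Delta w=a(w)(1-|\grad w|^2)$ with $w$ bounded above and below give $|\grad w(0)|\le C$. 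Alternatively, one can work with $u_d=(w/\beta)^\beta$, which solves the smooth semilinear equation $\Delta u_d=\tfrac\gamma2u_d^{\gamma-1}$ with $c\le u_d\le C$, and apply Schauder estimates. Since $\grad w(0)=\grad v(x)$, this yields $|\grad v|\le C(n,\gamma)$ in $B_{3/4}$. Points near $\partial B_1$ whose nearest free boundary point lies outside $B_1$ are handled by replacing $d(x)$ with $\min\{d(x),\tfrac18\}$.
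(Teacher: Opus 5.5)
The paper gives no proof of this statement; it quotes Theorem~1.4 of De Silva--Savin, whose viscosity framework is exactly your fallback. Your self-contained route, however, has a genuine gap, and it sits where you expected it.

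\textbf{The Harnack step is circular.} In Step~2 the Harnack inequality is not justified. The function $u$ is a positive \emph{sub}solution, $\Delta u=\tfrac\gamma2u^{\gamma-1}\ge0$. So the single value $u(x)=(Md/\beta)^\beta$ gives no pointwise lower bound on $B_{d/2}(x)$. Subharmonicity only bounds $u(x)$ by averages, and the weak Harnack inequality is a statement about supersolutions. The inhomogeneous Harnack inequality for $\Delta u=f$ on a ball $B\supset B_{d/2}(x)$ of radius comparable to $d$ has the error term $d^2\|f\|_{L^\infty(B)}$ with $f=\tfrac\gamma2u^{\gamma-1}$. That term is finite only if $u$ is already bounded below on $B$; on $B_d(x)$ itself it is infinite, since $u(x_0)=0$. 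Your remark that $f$ is small relative to $u/d^2$ ``once $u\gtrsim M^\beta d^\beta$'' presupposes that lower bound throughout the ball, which is the conclusion you want.

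The same unproved inequality is used twice more. It appears in the chaining meant to control $\sup_{B_{7/8}}v$: Step~1 bounds suprema from \emph{below}, so it cannot give an upper bound, and for points of $B_{3/4}$ far from the free boundary the ball $B_{d(x)}(x)$ need not lie in $B_1$. It appears again in Step~3, to get $w\ge c$ on $B_{1/2}$. The compactness alternative does not close the gap either, since extracting a limit from the normalized sequence requires exactly these missing bounds.

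\textbf{Comparison and a partial repair.} Comparison from below for $\Delta u=\tfrac\gamma2u^{\gamma-1}$ is not automatic, because $s\mapsto s^{\gamma-1}$ is decreasing. It is cleaner in the $v$-variable, where $|\nabla u|^2/u^\gamma=|\nabla v|^2$ appears directly. If $W$ is harmonic on an annulus inside $\{v>0\}$, with $|\nabla W|>1$ there and $W\le v$ on its boundary, then $W\le v$ inside. Indeed, a positive maximum of $W-v$ would be interior, and there $0=\Delta W\le\Delta v=\tfrac\alpha2(1-|\nabla W|^2)/v<0$. At the touching point $x_0$ this gives $1=|\nabla v(x_0)|\ge|\nabla W(x_0)|$.

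The lower bound this barrier needs can be obtained near the free boundary without any Harnack inequality:
\begin{enumerate}
\item Choose $x$ as a maximizer of a suitably weighted version of $v/d$. This gives $v\le CMd$ on $B_{d/2}(x)$, where $M=v(x)/d(x)$.
\item Run a Bernstein argument with a cutoff for $P=|\nabla v|^2$, based on $\bigl(\Delta+\alpha\tfrac{\nabla v}{v}\cdot\nabla\bigr)P=2|D^2v|^2+\alpha P(P-1)/v^2$. For balls $B_r\subset\{v>0\}$ this yields $|\nabla v|\le C(1+r^{-1}\sup_{B_r}v)$ on $B_{r/2}$, using only an upper bound on $v$.
\item Hence $|\nabla v|\le CM$ near $x$, so $v\ge Md/2$ on $B_{cd}(x)$. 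This is what the barrier on $B_d(x)\setminus B_{cd}(x)$ requires, and the barrier then forces $M\le C$.
\end{enumerate}
The same Bernstein estimate replaces Step~3, where the lower bound $w\ge c$ is neither proved nor needed.

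What still has no argument is an upper bound for $v$ at points of $B_{3/4}$ whose distance to the free boundary is of order one. That requires a genuine Harnack-type result for this singular equation, and your proposal does not supply one.
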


The following is a consequence of the higher order smoothness of the free boundary obtained in  \cite[Theorem 1.1 and Section 2]{RestrepoRosOton2025CinftyRegularitySemilinear}. We state it for cones, but it works for arbitrary classical solutions:
\begin{prop}[Higher regularity]
\label{prop:higher-regularity-boundary-calculus}
Let $n \ge 2$, $\gamma\in (0, 1)$. Let $u$ be a classical cone for the $\gamma$-Alt--Phillips problem in $\R^n$, and let $v$ denote the transformed solution. Let $\Omega = \{v > 0\}$, and let $\rho$ denote the (inward) distance to $\{v = 0\}$. Then, inside a fixed compact $K \Subset \R^n\setminus \{0\}$, we have the expansions around the free boundary:
\begin{equation}\label{eq:boundary-calculus}
  v=\rho+O(\rho^2),\qquad
  |\grad v|=1+O(\rho),\qquad
  D^2v=O(1),\qquad
  Q=O(\rho^{-1}).
\end{equation}
Consequently $v^\apalpha |Q|=O(\rho^{\apalpha-1})$ is locally integrable for $0<\apalpha<1$, and the nearby level sets $\{v=\eps\}\cap K$ have uniformly bounded area as $\eps\downarrow0$.
\end{prop}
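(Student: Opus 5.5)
The plan is to derive all the expansions from one structural fact: $v$ is smooth up to the free boundary and vanishes to first order with unit normal derivative. Fix a compact $K\Subset\R^n\setminus\{0\}$ and a slightly larger compact neighbourhood $K'$ of it, still away from the origin. By Definition~\ref{def:classical-homogeneous-ap-cone}, $\partial\Omega\cap K'$ is a smooth hypersurface. By homogeneity, $\partial\Omega\setminus\{0\}$ is a cone over a smooth closed curve (or union of curves) in the sphere. So all constants below may be taken uniform on $K$.

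\emph{Step 1 (smoothness up to the boundary).} I would invoke \cite[Theorem 1.1 and Section 2]{RestrepoRosOton2025CinftyRegularitySemilinear}. In the form I expect to use, it says that near a smooth free boundary point $u/\rho^\beta$ extends smoothly up to $\partial\Omega$. Equivalently, $v=\rho\, g$ with $g$ smooth on $\overline\Omega\cap K'$ for a tubular neighbourhood where $\rho$ is smooth. The free boundary condition $|\grad v|=1$ then gives $g=1$ on $\partial\Omega$. This is the step I expect to be the main obstacle. The difficulty is not the calculus but extracting from the cited work exactly this statement for the transformed function. The point is that $D^2v$ must be bounded up to the boundary, and this is \emph{not} automatic, because the right-hand side $\frac{\apalpha}{2}(1-|\grad v|^2)/v$ is singular. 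My first attempt would be to reduce to their boundary Schauder/expansion results for $u=(v/\beta)^\beta$ and transfer them to $v$ through the explicit power map, using that $u\sim\rho^\beta$ is nondegenerate.

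\emph{Step 2 (expansions).} With $v=\rho g$ and $g=1+O(\rho)$, the first expansion $v=\rho+O(\rho^2)$ is immediate. Differentiating gives $\grad v=g\grad\rho+\rho\grad g$, and $|\grad\rho|=1$ in the tubular neighbourhood. Hence $|\grad v|=1+O(\rho)$. Also $D^2v=g D^2\rho+\grad g\otimes\grad\rho+\grad\rho\otimes\grad g+\rho D^2 g=O(1)$. For $Q$, I would use $1-P=O(\rho)$ and $v^2\asymp\rho^2$ in the formula \eqref{eq:pq-definitions}, which gives $Q=O(\rho^{-1})$; equivalently, $Q=\Delta v/v$ with $\Delta v=O(1)$. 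Away from the tubular neighbourhood, $v$ is bounded below on $K$, so every quantity is bounded by interior smoothness.

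\emph{Step 3 (consequences).} First, $v^\apalpha|Q|\lesssim\rho^{\apalpha}\rho^{-1}=\rho^{\apalpha-1}$. Using the coarea formula in normal coordinates $(y,\rho)$ near $\partial\Omega\cap K'$, local integrability reduces to $\int_0^{\delta}\rho^{\apalpha-1}\,d\rho<\infty$, which holds since $\apalpha>0$. Second, since $\grad v=\grad\rho+O(\rho)$ has modulus close to $1$ in the tubular neighbourhood, the implicit function theorem applied to $v(y+t\nu(y))=\eps$ has a unique solution $t=t_\eps(y)=\eps+O(\eps^2)$, with $\partial_y t_\eps=O(\eps)$. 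Thus $\{v=\eps\}\cap K$ is a smooth normal graph over a compact portion of $\partial\Omega$ with uniformly bounded slope, and its area is uniformly bounded as $\eps\downarrow0$.
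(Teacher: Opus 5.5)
Your proposal is correct and takes the same route as the paper: the paper gives no argument beyond citing Restrepo--Ros-Oton for smoothness of $v$ up to the (already smooth) free boundary, and it treats the expansions, the local integrability of $v^{\alpha}|Q|$, and the level-set area bound as immediate consequences, which is what your Steps 2--3 write out. The one slip is cosmetic: the statement is for all $n\ge 2$, so the free boundary is a cone over a smooth compact hypersurface of $\mathbb S^{n-1}$ rather than over curves, and this changes nothing in your argument.
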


\begin{remark}
\label{rem:lipschitz-admissible-tests}
As a consequence, in the stability inequality, Proposition~\ref{prop:alt-phillips-stability-inequality}, we may restrict ourselves to test functions $\varphi \in {\rm Lip}_c(\overline{\{v > 0\}}\setminus \{0 \})$. Indeed, from \eqref{eq:boundary-calculus}, $v^\apalpha |Q|=O(\rho^{\apalpha-1})\in L^1_{\rm loc}$, and  since $\varphi$ is Lipschitz, $\varphi\in H^1_{\rm loc}(v^\apalpha dx)$ as well. 

Then, mollifying $\varphi$ gives smooth compactly supported functions $\varphi_j$, converging uniformly to $\varphi$. Their gradients are uniformly bounded, and $\grad\varphi_j\to\grad\varphi$ a.e. up to subsequences.  Since $v^\apalpha$ is locally integrable, dominated convergence gives convergence in the weighted gradient norm.  The potential term also converges because
\[
  v^\apalpha |Q|\,|\varphi_j-\varphi|^2 \le C v^\apalpha |Q| =O(\rho^{\apalpha-1})\in L^1_{\rm loc}.
\]
Thus the stability inequality of Proposition~\ref{prop:alt-phillips-stability-inequality} passes to $\varphi$.
\end{remark}

\subsection{Notation}
Throughout the paper, we let $u$ be a classical cone for the $\gamma$-Alt--Phillips problem. We will denote by $v$ the corresponding transformed solution as in Proposition~\ref{prop:transformed-equation}.

The positive phase of a cone will often be written in polar coordinates as
\[
  \Om=\{r\omega:r>0,\,\omega\in \Om_S\}, \qquad \Om_S=\Om\cap\mathbb{S}^{n-1}, \qquad v(r,\omega)=r\psi(\omega).
\]
Throughout the paper, we denote
\[
  r=|x|, \qquad P=|\grad v|^2, \qquad Q=\frac{\apalpha}{2}\frac{1-P}{v^2}=\frac{\Delta v}{v}.
\]
We will also use the linearized operator defined below: it is obtained by linearizing
\[
  \Delta v-\frac{\apalpha}{2}\frac{1-|\nabla v|^2}{v}=0
\]
inside the positive phase.  If $v_\varepsilon=v+\varepsilon f$, then
\[
  \left.\frac{d}{d\varepsilon}\right|_{\varepsilon=0} \left( \Delta v_\varepsilon -\frac{\apalpha}{2}\frac{1-|\nabla v_\varepsilon|^2}{v_\varepsilon} \right) = \Delta f+\apalpha\frac{\nabla v}{v}\cdot\nabla f+Qf.
\]
The weighted Alt--Phillips linearized operator $\Lin = \Lin_{v, \apalpha}$ is then
\begin{equation}\label{eq:linearized-operator}
  \Lin f:=v^{-\apalpha}\operatorname{div}(v^\apalpha\grad f)+Qf
  =\Delta f+\apalpha\frac{\grad v}{v}\cdot\grad f+Qf.
\end{equation}
For homogeneous degree-zero scalar functions, we use the scale-invariant operator
\[
  \Jh f=r^2\Lin f.
\]

\begin{remark}[Boundary condition for Jacobi fields]
The operator $\Lin$ will be used below only as the interior weighted
linearized operator. If, however, $f$ arises from a smooth family of free-boundary solutions
$v_t$ (such as translations), $f=\partial_t v_t|_{t=0}$, then it also satisfies the linearized
Bernoulli condition at the boundary.  With $\nu=\nabla v/|\nabla v|$ the inward normal and
$H=\Delta\rho|_{\partial\Omega}$, one has $v_\nu  = 1$, $H = -(1+\apalpha)v_{\nu\nu}$ (see \cite{KarakhanyanSanzPerela2026StableConesAltPhillips}).
If the free boundary moves with normal speed $a$, differentiating
$v_t=0$ on the moving boundary gives $f+a=0$.  Differentiating
$|\nabla v_t|^2=1$ gives $f_\nu+a v_{\nu\nu}=0$.  Hence,
\[
  f_\nu-v_{\nu\nu}f = f_\nu+\frac{H}{1+\apalpha}f=0 \quad\text{on }\partial\Om.
\]
This boundary condition will not be imposed on the test functions used below;
the Hardy reduction uses interior subsolutions and the boundary contribution is
handled by the weighted cutoff near $\partial\Omega$.
\end{remark}
We will also define the Hardy constants
\[
\lambda_{\apalpha,p}^{(n)} := \frac{(n+\apalpha-2-2p)^2}{4}.
\]
 When fixing  $n = 3$, we will denote
\[
  \lambda_\apalpha = \lambda^{(3)}_{\apalpha, 0} =\frac{(1+\apalpha)^2}{4}\qquad \text{and}\qquad\lambda_{\apalpha,p}=\lambda^{(3)}_{\apalpha, p} = \frac{(1+\apalpha-2p)^2}{4}.
\]
 
The next definition is the main object of use at the end of the proof:

\begin{defn}[Homogeneous Hessian]
\label{def:homogeneous-hessian}
Let $n = 3$. The 0-homogeneous Hessian, defined in $\overline{\{v > 0\}}$, is
\[
  \HH:=rD^2v.
\]
In particular, by 1-homogeneity of $v$, $\HH x=0$.  On $x^\perp$, write
\[
  T=\tr (\HH) = \tr(\HH|_{x^\perp})=rQv, \qquad \lambda_M=\lamax(\HH|_{x^\perp}), \qquad \lambda_m=\lamin(\HH|_{x^\perp}).
\]
In polar coordinates $v=r\psi$,
\[
  \HH|_{x^\perp}=\grad_{\Sn}^2\psi+\psi g_{\Sn},
\]
where $g_{\Sn}$ is the standard round metric on $\Sn$. The Euclidean largest Hessian eigenvalue used later is
\[
  \Lambda=\lamax(D^2v)=\frac{\lambda_M}{r}.
\]
\end{defn}
The flat half-space solution is characterized by $\HH\equiv0$ in its positive phase.

\section{A reduction on stability}\label{sec:reduction}
The stability inequality will be used through the following reduction (cf. \cite[Section 2.3, Propositions 2.1--2.2]{JerisonSavin2015StableConesOnePhase}).

\begin{prop}
\label{prop:stability-hardy-reduction}
Let $n \ge 2$, $\gamma\in (0, 1)$. Let $u$ be a classical stable cone for the $\gamma$-Alt--Phillips problem in $\R^n$, and let $v$ denote the transformed solution.

Let $G\ge0$, $G\not\equiv0$ in $\Om=\{v>0\}\subset \R^n$, be homogeneous of degree $-p$ in $\Om = \{v > 0\}$. Assume that $G\in {\rm Lip}_{\rm loc}(\overline\Om\setminus\{0\})$ and 
\begin{equation}\label{eq:g-subsolution-distributional}
  \Lin G\ge\mu\frac{G}{r^2}
\end{equation}
holds in the distributional sense in $\Om$ (recall \eqref{eq:linearized-operator}).  Then
\[
  \mu\le\lambda_{\apalpha,p}^{(n)} := \frac{(n+\apalpha-2-2p)^2}{4}.
\]
\end{prop}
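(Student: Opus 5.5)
Following Jerison--Savin, the plan is to test the stability inequality of Proposition~\ref{prop:alt-phillips-stability-inequality}, in the Lipschitz form allowed by Remark~\ref{rem:lipschitz-admissible-tests}, with $\varphi=G\,\eta$. Here $\eta$ is a cutoff, and the key algebraic identity is
\[
\int v^\apalpha|\grad(G\eta)|^2=\int v^\apalpha G^2|\grad\eta|^2-\int v^\apalpha G\eta^2\,v^{-\apalpha}\operatorname{div}(v^\apalpha\grad G),
\]
valid whenever $\eta$ vanishes near $\partial\Om$ and near $0,\infty$ (this requires $G$ only Lipschitz, since the distributional inequality is tested against $G\eta^2\ge0$). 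Combining this identity with \eqref{eq:g-subsolution-distributional} and stability gives
\[
\mu\int v^\apalpha\frac{G^2\eta^2}{r^2}\le\int v^\apalpha(\Lin G-QG)G\eta^2+\dots\le\int v^\apalpha G^2|\grad\eta|^2.
\]
More precisely, stability gives $\int v^\apalpha Q G^2\eta^2\le\int v^\apalpha|\grad(G\eta)|^2=\int v^\apalpha G^2|\grad \eta|^2-\int G\eta^2\operatorname{div}(v^\apalpha\grad G)$. The subsolution property gives $-\int G\eta^2\operatorname{div}(v^\apalpha \grad G)\le \int v^\apalpha(QG^2-\mu G^2r^{-2})\eta^2$. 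Hence
\[
\mu\int v^\apalpha G^2\eta^2r^{-2}\le\int v^\apalpha G^2|\grad\eta|^2 .
\]

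Next choose $\eta=\eta_1(r)\,\zeta_\eps(v)$, where $\zeta_\eps$ cuts off the region $\{v<\eps\}$ near the free boundary. The radial part is the standard borderline Hardy cutoff. The measure $v^\apalpha G^2 dx$ is homogeneous of degree $\apalpha-2p$, so with $d\sigma$ the measure on $\Om_S$, $\int v^\apalpha G^2 f(r)\,dx=c_0\int_0^\infty f(r)\,r^{n-1+\apalpha-2p}\,dr$, where $c_0=\int_{\Om_S}\psi^\apalpha G^2\,d\sigma\in(0,\infty)$. This constant is finite by the Lipschitz bound and the fact that $\psi^\apalpha$ is integrable, and positive since $G\not\equiv0$. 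Take $\eta_1(r)=r^{-a}\phi(\log r)$ with $a=(n+\apalpha-2-2p)/2$ and $\phi$ supported in $[0,R]$, rising to $1$ on length one and decaying linearly over length $R$. The inequality then reduces to the one-dimensional statement $\mu\int\phi^2\le\int(\phi'-a\phi)^2=a^2\int\phi^2+\int\phi'^2$ (the cross term integrates to zero). Since $\int\phi'^2/\int\phi^2\to0$ as $R\to\infty$, this yields $\mu\le a^2=\lambda^{(n)}_{\apalpha,p}$. Equivalently, one can use $\eta_1=r^{-a}$ times a logarithmic cutoff.

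The main obstacle is controlling the boundary cutoff $\zeta_\eps$. Its contribution is $\int v^\apalpha G^2\eta_1^2|\zeta_\eps'(v)|^2|\grad v|^2$, supported in $\{\eps<v<2\eps\}$. By Proposition~\ref{prop:higher-regularity-boundary-calculus}, on a compact $K$ this is bounded by $C\eps^{\apalpha}\eps^{-2}\cdot|\{\rho\lesssim\eps\}\cap K|\sim C\eps^{\apalpha-1}$, which does not vanish. The fix I would try is a logarithmic cutoff in $v$: $\zeta_\eps(v)=\log(v/\eps^2)/\log(1/\eps)$ on $[\eps^2,\eps]$. This gives a cost of $\int_{\eps^2}^\eps v^{\apalpha-2}\,dv/\log^2(1/\eps)$, which is still bad for $\apalpha<1$.

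I therefore expect instead that no boundary cutoff is needed. The weight $v^\apalpha$ vanishes on $\partial\Om$, and the integration by parts above can be done with $\eta=\eta_1(r)$ alone, provided the boundary flux $v^\apalpha G\,\partial_\nu G\,\eta^2$ vanishes. This should hold because $v^\apalpha\to0$ while $G$ is Lipschitz up to the boundary. Rigorously, I would apply the distributional inequality with test function $G\eta_1^2\zeta_\eps(v)$ for a linear cutoff $\zeta_\eps$. The error term is $\int v^\apalpha G\eta_1^2\zeta_\eps'\grad G\cdot\grad v=O(\eps^{\apalpha-1}\cdot\eps)=O(\eps^\apalpha)\to0$: the derivative $\zeta'_\eps$ appears only linearly against the thin layer of measure $O(\eps)$. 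Passing to the limit yields the identity without boundary terms. The potential term is handled by the integrability of $v^\apalpha|Q|$ from Proposition~\ref{prop:higher-regularity-boundary-calculus}, and stability is applied directly to $G\eta_1$, which is admissible by Remark~\ref{rem:lipschitz-admissible-tests}.
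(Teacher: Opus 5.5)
Your proposal is correct and follows essentially the paper's argument. You cut off near the free boundary only when testing the distributional subsolution inequality against $\eta^2G$ (error $O(\eps^\apalpha)$), apply stability directly to $\eta G$ via Remark~\ref{rem:lipschitz-admissible-tests}, and reduce by homogeneity to the one-dimensional weighted Hardy quotient. The only difference is cosmetic: you obtain the needed bound $\mu\le a^2$, with $a=(n+\apalpha-2-2p)/2$, from the explicit test functions $r^{-a}\phi(\log r)$ instead of quoting the sharp Hardy constant.
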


\begin{proof}
We separate the proof into two steps.

\medskip
\noindent {\bf Step 1:} The distributional assumption means that, for every nonnegative $\zeta\in C_c^\infty(\Om)$,
\begin{equation}\label{eq:weighted-distributional-subsolution}
        -\int_\Om v^\apalpha \grad G\cdot\grad\zeta\,dx
        +\int_\Om v^\apalpha QG\zeta\,dx
        \ge
        \mu\int_\Om v^\apalpha\frac{G\zeta}{r^2}\,dx .
\end{equation}

Fix $\eta\in C_c^\infty((0,\infty))$, and let $K\Subset\R^n\setminus\{0\}$ be an annulus containing the support of $\eta(r)$.  We first justify that \eqref{eq:weighted-distributional-subsolution} may be tested against $\eta^2G$.

Choose $\chi\in C^\infty(\R)$ with
$   0\le\chi\le1$, $\chi=0$ on  $(-\infty,1]$, $\chi=1$ on $[2,\infty)$ 
and set
$  \chi_\eps(x):=\chi\!\left(\frac{v(x)}{\eps}\right)$.
For fixed $\eps>0$, the function
\[
  \zeta_\eps:=\chi_\eps\eta^2G
\]
is nonnegative, Lipschitz, and compactly supported in $\Om\setminus\{0\}$. Hence, it is admissible, after a standard mollification argument, in \eqref{eq:weighted-distributional-subsolution}:
\[
\begin{split}
        &-\int_\Om v^\apalpha \grad G\cdot\grad(\chi_\eps\eta^2G)\,dx
        +\int_\Om v^\apalpha Q\chi_\eps\eta^2G^2\,dx     \ge
        \mu\int_\Om v^\apalpha\frac{\chi_\eps\eta^2G^2}{r^2}\,dx .
\end{split}
\]
We let $\eps\downarrow0$.  The terms not containing $\grad\chi_\eps$ converge by dominated convergence, using that $G$ is Lipschitz on $K$, that $r$ is bounded above and below on $K$, and that
$
  v^\apalpha |Q|\in L^1_{\rm loc}(\overline\Om\setminus\{0\})
$
by Proposition~\ref{prop:higher-regularity-boundary-calculus}.  It remains to check the cutoff error
\[
  E_\eps := \int_\Om v^\apalpha\eta^2G\,\grad G\cdot\grad\chi_\eps\,dx .
\]
Since
$
  |\grad\chi_\eps| \le \frac{C}{\eps}|\grad v|, $ $\supp\grad\chi_\eps\subset\{\eps<v<2\eps\}$,
and since $G,\grad G,\eta$ are bounded on $K$, the coarea formula gives
\[
\begin{split}
        |E_\eps|
        &\le
        \frac{C_K}{\eps}
        \int_{\{\eps<v<2\eps\}\cap K}
        v^\apalpha |\grad v|\,dx        = \frac{C_K}{\eps}
        \int_\eps^{2\eps}
        t^\apalpha
        \mathcal H^{n-1}(\{v=t\}\cap K)\,dt .
\end{split}
\]
The level-set measures are uniformly bounded for small $t>0$.  Therefore
\[
  |E_\eps|\le C_K\eps^\apalpha\to0 .
\]
Thus \eqref{eq:weighted-distributional-subsolution} can be tested against $\eta^2G$:
\begin{equation}\label{eq:test-against-eta-g}
        -\int_\Om v^\apalpha \grad G\cdot\grad(\eta^2G)\,dx
        +\int_\Om v^\apalpha Q\eta^2G^2\,dx
        \ge
        \mu\int_\Om v^\apalpha\frac{\eta^2G^2}{r^2}\,dx .
\end{equation}

\medskip
\noindent {\bf Step 2:} 
We now use stability (cf.  \cite{CaffarelliJerisonKenig2004GlobalMinimizersFreeBoundary}).  Since $\eta G$ is admissible (by Remark~\ref{rem:lipschitz-admissible-tests}),
\[
\begin{split}
        0 &\le
        \int_\Om v^\apalpha
        \bigl(|\grad(\eta G)|^2-Q\eta^2G^2\bigr)\,        = \int_\Om v^\apalpha|\eta'|^2G^2\, 
        + \int_\Om v^\apalpha \grad G\cdot\grad(\eta^2G)\, 
        - \int_\Om v^\apalpha Q\eta^2G^2   ,
\end{split}
\]
which is the distributional form of the identity
\begin{equation}
    \label{eq:identity-dist-form}
0\le \int_\Omega v^\alpha\bigl(|\nabla(\eta G)|^2-Q\eta^2G^2\bigr)
=
\int_\Omega v^\alpha|\eta'|^2G^2
-\int_\Omega v^\alpha\eta^2G\,\Lin G.
\end{equation}
Combined with \eqref{eq:test-against-eta-g}, we obtain
\begin{equation}\label{eq:before-radial-hardy}
        \mu\int_\Om v^\apalpha\frac{\eta^2G^2}{r^2}\,dx
        \le
        \int_\Om v^\apalpha|\eta'|^2G^2\,dx .
\end{equation}

Finally we reduce to the radial Hardy quotient.  Write
\[
  v(r,\omega)=r\psi(\omega), \qquad G(r,\omega)=r^{-p}F(\omega), \qquad \Omega_S=\Om\cap\mathbb S^{n-1}.
\]
Set
\[
  A_G:=\int_{\Omega_S}\psi^\apalpha F^2\,d\omega .
\]
Since $G\ge0$ and $G\not\equiv0$ in $\Om$, we have $A_G>0$.  By homogeneity and spherical coordinates,
\[
  \int_\Om v^\apalpha|\eta'|^2G^2\,dx = A_G\int_0^\infty r^{n-1+\apalpha-2p}|\eta'|^2\,dr ,
\]
and
\[
  \int_\Om v^\apalpha\frac{\eta^2G^2}{r^2}\,dx = A_G\int_0^\infty r^{n-3+\apalpha-2p}\eta^2\,dr .
\]
Thus, \eqref{eq:before-radial-hardy} gives
\[
  \mu \int_0^\infty r^{n-3+\apalpha-2p}\eta^2\,dr \le \int_0^\infty r^{n-1+\apalpha-2p}|\eta'|^2\,dr
\]
for every $\eta\in C_c^\infty((0,\infty))$.  Taking the infimum over nonzero such $\eta$ and using the sharp one-dimensional Hardy constant,
\[
\begin{split}
        \mu
        &\le
        \inf_{\eta\not\equiv0}
        \frac{\displaystyle
        \int_0^\infty r^{n-1+\apalpha-2p}|\eta'|^2\,dr}
        {\displaystyle
        \int_0^\infty r^{n-3+\apalpha-2p}\eta^2\,dr}  =
        \frac{(n+\apalpha-2-2p)^2}{4}
        = \lambda_{\apalpha,p}^{(n)} .
\end{split}
\]
The proposition follows.
\end{proof}

A consequence from the homogeneous formulation is the following.  It excludes cones whose positivity set is all of $\R^n\setminus\{0\}$ for certain ranges of $\gamma$. The specific ranges match those obtained in \cite{SavinYu2025StableMinimizingConesAltPhillips} for stable cones with radial symmetry.

\begin{prop} \label{prop:full-support-exclusion}
Let $n \ge 3$, $\gamma\in (0, 1)$, $ \apalpha=\frac{2\gamma}{2-\gamma} $. Let $u$ be a classical stable cone for the $\gamma$-Alt--Phillips problem in $\R^n$, and let $v$ denote the transformed solution.

Assume that $\Omega := \{v > 0\} = \R^n\setminus \{0\}$. Then
\[
  n-1\le \frac{(n+\apalpha-2)^2}{4}.
\]
In particular, if $n \le 6$,  then there is no full-support stable cone when
\[
  0<\gamma< \gamma_n := \frac{2(2\sqrt{n-1}-n+2)} {2\sqrt{n-1}-n+4}.
\]
Moreover, if $\gamma = \gamma_n$, then the cone is radial.
\end{prop}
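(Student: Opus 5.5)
The plan is to apply the stability inequality with the simplest possible test function, $G\equiv1$ times a radial cutoff. The required bound will then come from an \emph{integrated} identity on the sphere rather than from a pointwise subsolution inequality. This also explains the rigidity statement: equality will force $\psi$ to be constant.

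\textbf{Step 1: stability with $\varphi=\eta(r)$.} Since $\Om=\R^n\setminus\{0\}$, there is no free boundary. Hence any $\eta\in C^\infty_c((0,\infty))$, viewed as the radial function $\eta(|x|)$, is admissible in Proposition~\ref{prop:alt-phillips-stability-inequality}. Writing $v=r\psi(\omega)$ with $\psi>0$ smooth on $\mathbb S^{n-1}$, note that $r^2Q$ is $0$-homogeneous. Integrating in polar coordinates gives
\[
  \int_{\mathbb S^{n-1}}\psi^\apalpha (r^2Q)\,d\omega\int_0^\infty r^{n-3+\apalpha}\eta^2\,dr\le \int_{\mathbb S^{n-1}}\psi^\apalpha\,d\omega\int_0^\infty r^{n-1+\apalpha}|\eta'|^2\,dr .
\]
The sharp one-dimensional Hardy constant, exactly as at the end of the proof of Proposition~\ref{prop:stability-hardy-reduction} with $p=0$, then yields
\[
  \int_{\mathbb S^{n-1}}\psi^\apalpha (r^2Q)\,d\omega\le \frac{(n+\apalpha-2)^2}{4}\int_{\mathbb S^{n-1}}\psi^\apalpha\,d\omega.
\]
Equivalently, this is Proposition~\ref{prop:stability-hardy-reduction} applied with $G\equiv1$ and $\mu$ replaced by the $\psi^\apalpha$-weighted spherical average of $r^2Q$; the proof there only used $\mu$ after integration.

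\textbf{Step 2: lower bound for the weighted average of $r^2Q$.} Since $Q=\Delta v/v$ and $r\Delta v=\Delta_{\mathbb S^{n-1}}\psi+(n-1)\psi$ for $1$-homogeneous $v$, we have
\[
r^2Q=\frac{\Delta_{\mathbb S^{n-1}}\psi+(n-1)\psi}{\psi}.
\]
Integrating by parts on the closed sphere gives
\[
  \int\psi^\apalpha r^2Q\,d\omega=(n-1)\int\psi^\apalpha\,d\omega+\int\psi^{\apalpha-1}\Delta_{\mathbb S^{n-1}}\psi\,d\omega=(n-1)\int\psi^\apalpha\,d\omega+(1-\apalpha)\int\psi^{\apalpha-2}|\grad_{\mathbb S^{n-1}}\psi|^2\,d\omega .
\]
Because $\gamma<1$ we have $\apalpha=\frac{2\gamma}{2-\gamma}<2$, but we need $1-\apalpha\ge0$ for the last term to have a sign.

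This sign is the point I expect to be the main obstacle. For $\apalpha\le1$ the last term is nonnegative, and combining with Step 1 gives $n-1\le (n+\apalpha-2)^2/4$. For $1<\apalpha<2$ I would instead integrate $\psi^{\apalpha-1}\Delta\psi$ differently. The first attempt is a different power test function, $\varphi=\eta(r)\psi^{s}$, choosing $s$ to cancel the bad gradient term: the cross terms produce $\bigl(s^2-s(1-\apalpha)\ldots\bigr)\int\psi^{\apalpha+2s-2}|\grad\psi|^2$, and optimizing in $s$ should give a nonnegative remainder. Alternatively, one checks that in the stated ranges ($n\le6$, $\gamma<\gamma_n$) one automatically has $\apalpha\le1$, since $\gamma_n\le 2/3$ there, so only the easy case is needed for the "in particular" part.

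\textbf{Step 3: the explicit range and equality.} The inequality $n-1\le(n+\apalpha-2)^2/4$ with $n+\apalpha-2>0$ is equivalent to $\apalpha\ge 2\sqrt{n-1}-n+2$. Inverting $\apalpha=\frac{2\gamma}{2-\gamma}$, i.e. $\gamma=\frac{2\apalpha}{2+\apalpha}$, which is increasing in $\apalpha$, gives $\gamma\ge\gamma_n$. Here $\gamma_n\in(0,1)$ precisely when $n\le6$. So no full-support stable cone exists for $\gamma<\gamma_n$.

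If $\gamma=\gamma_n$, all inequalities above are equalities. In particular $(1-\apalpha)\int\psi^{\apalpha-2}|\grad\psi|^2=0$ with $\apalpha<1$, so $\psi$ is constant and the cone is radial.
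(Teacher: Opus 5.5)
Your choice $\varphi=\eta(r)$ only yields the remainder $(1-\apalpha)\int\psi^{\apalpha-2}|\grad_{\mathbb S^{n-1}}\psi|^2\,d\omega$. So Step 2 proves $n-1\le(n+\apalpha-2)^2/4$ only for $\apalpha\le1$. Your fallback (b) rests on a false claim: $\gamma_n\le 2/3$ is equivalent to $\apalpha_n:=2\sqrt{n-1}-n+2\le1$, i.e.\ to $4(n-1)\le(n-1)^2$, i.e.\ to $n\ge5$. For $n=3$ one has $\apalpha_3=2\sqrt2-1\approx1.83$ and $\gamma_3\approx0.955$; for $n=4$, $\apalpha_4=2\sqrt3-2\approx1.46$ and $\gamma_4\approx0.845$. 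Hence for $n=3,4$ you have not proved the nonexistence of full-support stable cones with $\gamma\in(2/3,\gamma_n)$, i.e.\ the main inequality for $\apalpha\in(1,\apalpha_n)$. (For $n\ge5$ and $\apalpha>1$ the inequality is automatic.) The rigidity claim fails for the same reason, since your argument needs $\apalpha_n<1$, which holds only for $n=6$. For $n=5$, $\apalpha_5=1$ and your remainder coefficient vanishes identically; for $n=3,4$ it is negative.

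Fallback (a) is the right idea, but it has to be carried out, and the $s$-dependence is not what you sketch. Take $\varphi=\eta(r)\psi^s$ and use $\int\psi^{\apalpha+2s-1}\Delta_{\mathbb S^{n-1}}\psi\,d\omega=-(\apalpha+2s-1)\int\psi^{\apalpha+2s-2}|\grad_{\mathbb S^{n-1}}\psi|^2\,d\omega$. Stability and the sharp Hardy constant then give
\[
  (n-1)A_s+\bigl(2-\apalpha-(1+s)^2\bigr)E_s\le\frac{(n+\apalpha-2)^2}{4}A_s,
  \qquad
  A_s=\int\psi^{\apalpha+2s}\,d\omega,\quad E_s=\int\psi^{\apalpha+2s-2}|\grad_{\mathbb S^{n-1}}\psi|^2\,d\omega .
\]
The optimal exponent is $s=-1$, where the coefficient is $2-\apalpha>0$ for every $\gamma\in(0,1)$. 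This gives the inequality in the full range. At $\gamma=\gamma_n$ it forces $E_{-1}=0$, hence $\psi$ constant, for all $n\le6$.

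That is precisely the paper's proof. It uses $G=1/\psi=r/v$, for which $r^2\Lin G=(n-1)G+(2-\apalpha)\psi^{-2}|\grad_{\mathbb S^{n-1}}\psi|^2G$ holds pointwise. So Proposition~\ref{prop:stability-hardy-reduction} applies directly with $p=0$ and $\mu=n-1$, and the equality case follows by keeping the remainder $(2-\apalpha)E$.
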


\begin{proof}
Write
\[
  v(r,\omega)=r\psi(\omega), \qquad \psi>0 \quad\text{on } \mathbb S^{n-1}.
\]
Then
\[
  Q=r^{-2}q(\omega), \qquad q=\frac{\Delta_{\mathbb S^{n-1}}\psi+(n-1)\psi}{\psi}.
\]
For degree-zero functions $F=F(\omega)$,
\[
  r^2\Lin F = \Delta_{\mathbb S^{n-1}}F + \apalpha \frac{\grad_{\mathbb S^{n-1}}\psi}{\psi} \cdot \grad_{\mathbb S^{n-1}}F + qF .
\]
Take
\[
  G:=\frac1\psi=\frac r v .
\]
A direct computation gives
\[
  \grad_{\mathbb S^{n-1}}G = -\psi^{-2}\grad_{\mathbb S^{n-1}}\psi
\]
and
\[
  \Delta_{\mathbb S^{n-1}}G = -\psi^{-2}\Delta_{\mathbb S^{n-1}}\psi + 2\psi^{-3}|\grad_{\mathbb S^{n-1}}\psi|^2 .
\]
Therefore
\[
\begin{split}
        r^2\Lin G
        &= \Delta_{\mathbb S^{n-1}}G
        + \apalpha
        \frac{\grad_{\mathbb S^{n-1}}\psi}{\psi}
        \cdot
        \grad_{\mathbb S^{n-1}}G
        + qG                                                    \\
        &= -\psi^{-2}\Delta_{\mathbb S^{n-1}}\psi
        +(2-\apalpha)\psi^{-3}
        |\grad_{\mathbb S^{n-1}}\psi|^2
        + \frac{\Delta_{\mathbb S^{n-1}}\psi+(n-1)\psi}{\psi^2} \\
        &= (n-1)G
        + (2-\apalpha)
        \frac{|\grad_{\mathbb S^{n-1}}\psi|^2}{\psi^2}G .
\end{split}
\]
Since $0<\gamma<1$, we have $0<\apalpha<2$.  Hence
\[
  \Lin G\ge (n-1)\frac{G}{r^2}.
\]
Applying Proposition~\ref{prop:stability-hardy-reduction} with $p=0$ and $\mu=n-1$ gives
\[
  n-1\le \lambda_{\apalpha,0}^{(n)} = \frac{(n+\apalpha-2)^2}{4}.
\]
This is equivalent to
\[
  \apalpha\ge 2\sqrt{n-1}-n+2.
\]
The corresponding threshold in $\gamma$ is
\[
  \gamma_n= \frac{2(2\sqrt{n-1}-n+2)} {2\sqrt{n-1}-n+4}
\]
whenever $2\sqrt{n-1}-n+2>0$ (i.e., $n \le 6$).

It remains to deal with the equality case.  Keeping the positive remainder in the stability computation \eqref{eq:identity-dist-form} with the test function
\[
  \varphi(r,\omega)=\eta(r)\psi(\omega)^{-1}
\]
gives
\[
  0\le A\int_0^\infty r^{n-1+\apalpha}|\eta'|^2\,dr - \bigl((n-1)A+(2-\apalpha)E\bigr) \int_0^\infty r^{n-3+\apalpha}\eta^2\,dr,
\]
where
\[
  A:=\int_{\mathbb S^{n-1}}\psi^{\apalpha-2}\,d\omega>0, \qquad E:= \int_{\mathbb S^{n-1}} \psi^{\apalpha-4} |\grad_{\mathbb S^{n-1}}\psi|^2\,d\omega\ge0 .
\]
Taking a sharp Hardy sequence yields
\[
  (n-1)A+(2-\apalpha)E \le \frac{(n+\apalpha-2)^2}{4}A .
\]
If equality holds in the threshold, then
\[
  \frac{(n+\apalpha-2)^2}{4}=n-1.
\]
Since $0<\apalpha<2$, this forces
$
  E=0.
$
Thus
\[
  \grad_{\mathbb S^{n-1}}\psi\equiv0,
\]
so $\psi$ is constant and the cone is radial.
\end{proof}

\section{Global positive mean curvature}\label{sec:positive-mean-curvature}
In this section we show that the mean curvature of the free boundary is positive. This matches the corresponding property  known for $\gamma= 0$, and it may be already known in the literature. We record it here for completeness.

The starting point is the sharp transformed gradient bound $|\grad v|\le1$ \'a la Modica (see Remark~\ref{rem:positive-mean-curvature-cones} below for the same results in classical cones):

\begin{prop}[Global gradient bound]
\label{prop:global-gradient-bound}
Let $n \ge 2$, $\gamma\in (0, 1)$. Let $u$ be a global classical solution for the $\gamma$-Alt--Phillips problem, and let $v$ denote the transformed solution. Assume $\partial\{v > 0\}\neq \varnothing$.  Then
\[
  |\grad v|\le1\quad\text{in }\Om .
\]
\end{prop}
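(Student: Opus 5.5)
We prove the bound with a Modica-type $P$-function argument. We show that $P=|\grad v|^2$ cannot exceed $1$ by studying an auxiliary quantity built from $P$ and the equation $\Delta v=\frac{\apalpha}{2}\frac{1-P}{v}$ in $\Om$.

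\textbf{Step 1: elliptic inequality for $P$.} Compute $\Delta P=2|D^2v|^2+2\grad v\cdot\grad\Delta v$. Differentiating the equation gives
\[
  \grad\Delta v=-\frac{\apalpha}{2}\frac{\grad P}{v}-\frac{\apalpha}{2}\frac{(1-P)\grad v}{v^2}.
\]
Hence
\[
  \Delta P+\apalpha\frac{\grad v\cdot\grad P}{v}=2|D^2v|^2-\apalpha\frac{(1-P)P}{v^2}.
\]
Suppose $\sup_\Om P>1$, and work on the open set $\{P>1\}$. There the last term is nonnegative, so the right-hand side is at least $2|D^2v|^2\ge0$. Therefore $P$ is a subsolution of a uniformly elliptic operator with smooth, locally bounded drift, and the strong maximum principle applies: $P$ has no interior maximum above $1$ unless it is locally constant. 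If $P$ is constant and greater than $1$, the identity forces $D^2v=0$ and $(1-P)P=0$, which is impossible. So $P$ has no interior local maximum exceeding $1$.

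\textbf{Step 2: boundary and infinity.} On $\partial\Om$ we have $P=1$ by the free boundary condition, and $P$ is continuous up to $\partial\Om$ by the classical assumption and Proposition~\ref{prop:higher-regularity-boundary-calculus}. By Proposition~\ref{prop:lipschitz-bound} applied to rescalings (legitimate because $\partial\Om\neq\varnothing$ and the problem is scale invariant in $v$), $P$ is globally bounded. The difficulty is that the supremum $M>1$ could be approached only at infinity.

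\textbf{Step 3: compactness at infinity (main obstacle).} To handle this, take $x_k$ with $P(x_k)\to M$ and set $d_k=\dist(x_k,\partial\Om)$.
\begin{itemize}[leftmargin=*]
\item If $d_k$ stays bounded, translate so that the nearest free boundary point sits at the origin. The classical estimates (the Lipschitz bound together with $C^{1,\alpha}$/higher regularity up to the boundary) give compactness, and the limit attains $P=M$ either at an interior point, contradicting Step 1, or at a boundary point, contradicting $P=1$ there.
\item If $d_k\to\infty$, rescale by $d_k$: $v_k(y)=v(x_k+d_ky)/d_k$. These are again solutions, since the equation for $v$ is invariant under $v\mapsto v(\lambda\cdot)/\lambda$, with free boundary at distance $1$ from the origin and uniformly bounded gradients. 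Their limit is a solution attaining $\sup P=M$ at the origin. That is an interior point, since the free boundary stays at distance $1$ and interior estimates hold uniformly on $B_{1/2}$ because $v_k\ge c>0$ there by nondegeneracy. This contradicts Step 1 again.
\end{itemize}
The delicate points are uniform nondegeneracy and regularity to pass to the limit; these should follow from the estimates of \cite{DeSilvaSavin2021DegenerateOnePhase}. For homogeneous cones this step is simpler: $P$ is $0$-homogeneous, so its supremum is attained on the compact set $\overline{\Om_S}$, and only Steps 1 and 2 are needed, which suffices for the applications in this paper.
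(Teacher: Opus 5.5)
Your Step 1 is the same Bochner identity the paper uses. Your closing remark is also correct: for cones $P$ is $0$-homogeneous, its supremum is attained on $\overline{\Om_S}$, and Steps 1--2 suffice. This is the paper's remark on the conical case, and it is what is used later.

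For the statement as posed (global classical solutions), however, Step 3 has a genuine gap.

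\textbf{Case $d_k$ bounded.} When $d_k\to0$, or when $v(x_k)\to0$, the limit point can land on the free boundary of the limit. Concluding ``$P=M$ at a boundary point, contradicting $P=1$'' then needs $C^1$ convergence up to the free boundary, that is, regularity estimates that hold uniformly for all translates. The classical hypothesis is only qualitative. It gives no such uniformity, and nothing guarantees the limit is classical near that point.

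\textbf{Case $d_k\to\infty$.} You need $v_k\ge c>0$ on $B_{1/2}$, which is a quantitative nondegeneracy estimate $v\ge c\,\dist(\cdot,\partial\Om)$. You do not prove it, and it is not among the tools available here. Without it, $v_k(0)=v(x_k)/d_k$ may tend to $0$, and the equation degenerates at the origin, so interior estimates are lost.

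The missing idea is to normalize by the value of $v$ rather than by the distance. Let $L=\sup|\grad v|<\infty$ and $\lambda_k:=v(x_k)$. The global Lipschitz bound together with $v=0$ on $\partial\Om$ gives $\dist(x_k,\partial\Om)\ge\lambda_k/L$. Define $v_k(y):=v(x_k+\lambda_k y)/\lambda_k$. Then $v_k(0)=1$, $|\grad v_k|\le L$, and $v_k\ge 1-L|y|\ge\tfrac12$ on $B_{1/(2L)}$. On this ball the right-hand side $\tfrac{\apalpha}{2}(1-|\grad v_k|^2)/v_k$ is uniformly bounded, so interior $W^{2,p}$ and Schauder bootstrapping give a smooth limit $V$ with $|\grad V(0)|=L=\sup|\grad V|$. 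The maximum of $P$ is now interior, and your Step 1 identity evaluated at $0$ gives $0\ge 2|D^2V(0)|^2+\apalpha L^2(L^2-1)/V(0)^2>0$, a contradiction. This treats every regime of $d_k$ at once and needs neither nondegeneracy nor boundary regularity. It is exactly the paper's proof.
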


\begin{proof}
Let $L:=\sup_{\R^n}|\grad v|$, which by Proposition~\ref{prop:lipschitz-bound} (rescaled) satisfies $L  < \infty$.   If $L \le 1$ there is nothing to prove.  Suppose instead that $L>1$.  Choose points $x_k\in\Om$ such that $|\grad v(x_k)|\to L$, and set $\lambda_k=v(x_k)>0$.  Define
\[
  v_k(y)=\frac{v(x_k+\lambda_k y)}{\lambda_k}.
\]
Since $v$ is globally $L$-Lipschitz and vanishes continuously on $\partial\Om$,
\[
  v(x_k)\le L\,\dist(x_k,\partial\Om),
\]
and hence $B_{\lambda_k/(2L)}(x_k)\subset\Om$.  After scaling, $v_k$ is a smooth solution of the same transformed equation in $B_{1/(2L)}$, and
\[
  v_k(0)=1, \qquad |\grad v_k(0)|\to L, \qquad |\grad v_k|\le L .
\]
Moreover $v_k(y)\ge1-L|y|$, so $v_k\ge1/2$ in $B_{1/(2L)}$.  In this fixed ball the right-hand side of the equation is smooth and uniformly controlled.  Interior estimates and Arzel\`a--Ascoli therefore give, after passing to a subsequence, smooth convergence in a smaller ball to a solution $V$ satisfying
\[
  V(0)=1, \qquad |\grad V(0)|=L, \qquad |\grad V|\le L,
\]
and
\[
  \Delta V=\frac{\apalpha}{2}\frac{1-|\grad V|^2}{V}.
\]
Set $P=|\grad V|^2$.  Differentiating the equation gives
\begin{equation}\label{eq:modica-bochner-limit}
  \left(\Delta+\apalpha\frac{\grad V}{V}\cdot\grad\right)P
  =2|D^2V|^2+\apalpha\frac{P(P-1)}{V^2} .
\end{equation}
Since $P\le L^2$ and $P(0)=L^2$, the function $P$ has an interior maximum at the origin.  Thus $\grad P(0)=0$ and $\Delta P(0)\le0$.  Evaluating \eqref{eq:modica-bochner-limit} at $0$ gives
\[
  0\ge 2|D^2V(0)|^2+ \apalpha L^2\frac{L^2-1}{V(0)^2}>0,
\]
a contradiction.  Therefore $L\le1$, as claimed.
\end{proof}

We now convert the sharp gradient estimate into strict positivity of the inward mean curvature.  The proof is local near a regular free-boundary patch, but it uses the global sign $1-|\grad v|^2\ge0$ (in particular, by Remark~\ref{rem:positive-mean-curvature-cones} it applies to classical cones as well).

\begin{prop}[Positive mean curvature]
\label{prop:positive-mean-curvature}
Let $n \ge 2$, $\gamma\in (0, 1)$. Let $u$ be a global classical solution for the $\gamma$-Alt--Phillips problem, and let $v$ denote the transformed solution. Let $\mathcal{V}$ denote a nonflat connected component of $\{v > 0\}$.

Let $H:\partial\{v > 0\}\to \R$ denote the mean curvature of $\partial \{v > 0\}$ with respect to the inner unit normal vector to $\{v > 0\}$. Then,
\[
 |\nabla v|^2  < 1 \quad\text{in}\quad\mathcal{V}\qquad\text{and}\qquad  H > 0\quad\text{on}\quad \partial\mathcal{V}.
\]
\end{prop}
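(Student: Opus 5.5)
We prove the two claims in order: first the strict gradient bound $P:=|\grad v|^2<1$ in $\mathcal V$, via the strong maximum principle applied to the Bochner-type identity for $P$, and then $H>0$ via the Hopf lemma at the free boundary.

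\textbf{Step 1: strict gradient bound.} By Proposition~\ref{prop:global-gradient-bound} we know $P\le 1$ in $\Om$. In $\mathcal V$, set $w:=1-P\ge0$. The identity \eqref{eq:modica-bochner-limit} holds in $\mathcal V$ for $v$ itself, since it is purely local. It gives
\[
  \Bigl(\Delta+\apalpha\frac{\grad v}{v}\cdot\grad\Bigr)w = -2|D^2v|^2-\apalpha\frac{P}{v^2}\,w+\apalpha\frac{w^2}{v^2}\cdot 0,
\]
that is, $\bigl(\Delta+\apalpha\frac{\grad v}{v}\cdot\grad+\apalpha\frac{P}{v^2}\bigr)w=-2|D^2v|^2\le0$. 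Hence $w\ge0$ is a supersolution of a linear elliptic equation with coefficients that are locally bounded in the interior. The strong maximum principle (for nonnegative supersolutions, the sign of the zeroth-order term is irrelevant) gives either $w>0$ in $\mathcal V$ or $w\equiv0$.

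In the second case $|D^2v|\equiv0$ in $\mathcal V$, so $v$ is affine in $\mathcal V$ with $|\grad v|=1$. Then $\mathcal V$, being a connected component of the positivity set of a continuous function vanishing on $\partial\mathcal V$, is a half-space with $v=(x\cdot e)_+$ there. This contradicts nonflatness. So $P<1$ in $\mathcal V$.

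\textbf{Step 2: identify $H$ and set up Hopf.} At a free boundary point $x_0\in\partial\mathcal V$, let $\nu$ be the inner normal. The relation recalled in the Remark on Jacobi fields is $H=-(1+\apalpha)v_{\nu\nu}$. To check it, note that $v_\nu=1$ and the tangential derivatives vanish on $\partial\Om$, so $\Delta v=v_{\nu\nu}+Hv_\nu=v_{\nu\nu}+H$ on the boundary. On the other hand, $\partial_\nu P=2v_\nu v_{\nu\nu}=2v_{\nu\nu}$, and
\[
  \Delta v=\tfrac{\apalpha}{2}\tfrac{1-P}{v}\to-\apalpha v_{\nu\nu}\quad\text{as }\rho\to0,
\]
by the expansions of Proposition~\ref{prop:higher-regularity-boundary-calculus} together with $P=1+2v_{\nu\nu}\rho+O(\rho^2)$. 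So $H>0$ is equivalent to $\partial_\nu w(x_0)>0$, since $\partial_\nu w=-2v_{\nu\nu}=\frac{2H}{1+\apalpha}$.

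\textbf{Step 3: Hopf lemma (the main obstacle).} We have $w>0$ in $\mathcal V$ and $w=0$ on $\partial\mathcal V$, and we want a strict Hopf inequality. The operator is singular at the boundary: the drift $\apalpha\grad v/v\sim\apalpha\nu/\rho$ and the potential $\apalpha P/v^2\sim\apalpha/\rho^2$. I would handle this with an explicit barrier in an interior tangent ball $B=B_R(y)\subset\mathcal V$ touching at $x_0$. The natural candidate is $b=\rho_B^{\,1}$-type, say $b=e^{-k|x-y|^2}-e^{-kR^2}$ localized in an annulus. In the singular region it behaves like $\rho$, and
\[
  \bigl(\Delta+\apalpha\tfrac{\grad v}{v}\cdot\grad\bigr)\rho\approx\tfrac{\apalpha}{\rho}.
\]
The potential term is harder: it pushes in the wrong direction for a subsolution. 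To remove it I would work with the supersolution inequality multiplied through by $v^2$, or equivalently drop the potential term, since $w\ge0$ gives $\bigl(\Delta+\apalpha\frac{\grad v}{v}\cdot\grad\bigr)w\le-\apalpha Pw/v^2\le0$. Then $w$ is a supersolution of the drift operator $\mathcal M=\Delta+\apalpha\frac{\grad v}{v}\cdot\grad$ alone. For $b$ one computes $\mathcal M b\ge c\,\apalpha/\rho-C>0$ near the boundary, because the drift points inward along $\grad b$ and has a favorable sign. Comparing $\epsilon b\le w$ on the inner sphere of the annulus, by Step 1 and compactness, gives $w\ge\epsilon b$, hence $\partial_\nu w(x_0)\ge\epsilon\,\partial_\nu b(x_0)>0$. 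The delicate points are verifying that the singular drift indeed helps and that $w\in C^1$ up to the boundary. The latter follows from the $C^\infty$ regularity of Proposition~\ref{prop:higher-regularity-boundary-calculus}, extended to general classical solutions as noted there. This yields $H>0$ on $\partial\mathcal V$.
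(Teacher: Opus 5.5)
Your Steps 1 and 2 are sound, but the identity you write for $w=1-P$ has the wrong sign in the potential, and Step 3 rests on that sign. From \eqref{eq:modica-bochner-limit} one gets
\[
  \Bigl(\Delta+\alpha\frac{\nabla v}{v}\cdot\nabla\Bigr)w=-2|D^2v|^2+\alpha\frac{P}{v^2}\,w .
\]
So $w$ is a supersolution of $\mathcal M=\Delta+\alpha\frac{\nabla v}{v}\cdot\nabla-\alpha\frac{P}{v^2}$ (this is \eqref{eq:s-equation}), whose zeroth-order coefficient is nonpositive. Step 1 survives: as you say, the strong maximum principle for nonnegative supersolutions does not see the sign of that coefficient. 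Your derivation of $H=-(1+\alpha)v_{\nu\nu}$ and $\partial_\nu w=\frac{2H}{1+\alpha}$ is also correct.

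In Step 3, however, the potential cannot be discarded. The term $\alpha Pw/v^2\ge0$ has the wrong sign for that, and near $\partial\mathcal V$ it behaves like $\frac{2\alpha H}{(1+\alpha)\rho}$, which dominates the bounded term $2|D^2v|^2$. Since $H>0$ is in fact true, $w$ is a strict \emph{sub}solution of $\Delta+\alpha\frac{\nabla v}{v}\cdot\nabla$ in a boundary layer. No comparison with respect to the pure drift operator can therefore give the Hopf inequality.

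With the correct operator, the mechanism you invoke disappears. For a barrier $b$ vanishing linearly on the boundary, the drift contribution $\alpha\frac{\nabla v\cdot\nabla b}{v}\approx\frac{\alpha|\nabla b|}{\rho}$ is cancelled to leading order by $-\alpha\frac{P}{v^2}b\approx-\frac{\alpha|\nabla b|}{\rho}$. For instance, $\mathcal M v=\Delta v=\frac{\alpha}{2}\frac{w}{v}$ is merely bounded. Hence the claim $\mathcal M b\ge c\alpha/\rho-C$ is false. What is missing is a barrier whose positivity comes from the bounded terms that remain after this cancellation.

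The paper builds such a barrier in Fermi coordinates $(y,\rho)$ at $x_0$, taking $\Phi=\rho+C\rho^2-B|y|^2$. The singular terms cancel and $\mathcal M(\rho+C\rho^2)=(2+\alpha)C+O(1)$, while the zeroth-order term acting on $-B|y|^2$ is nonnegative. So for $C$ large, $\mathcal M\Phi\ge1$ in a thin cylinder, and $\Phi\le0$ on its lateral and free-boundary faces. Comparing $S$ with $\delta\Phi$, which is legitimate because the zeroth-order coefficient of $\mathcal M$ is nonpositive, gives $\partial_\nu S(x_0)\ge\delta>0$.

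Your tangent-ball Gaussian barrier could presumably be repaired as well. That would require a second-order expansion at $x_0$ showing that $\frac{\alpha}{v^2}\bigl(v\nabla v\cdot\nabla b-Pb\bigr)$ is bounded below by $-Ck\,e^{-k|x-y|^2}$, so that it is absorbed by $\Delta b$. The inputs would be that $\nabla b$ and $\nabla v$ are aligned at $x_0$ and that the ball curves more than $\partial\mathcal V$. That analysis is exactly the missing step.
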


\begin{proof}
Let $\rho$ be the inward distance to $\Sigma :=\partial\mathcal{V}$, so that $ H:=\Delta\rho\big|_\Sigma $ on $\partial\mathcal{V}$.

Let
\[
  S:=1-|\grad v|^2 .
\]
By Proposition~\ref{prop:global-gradient-bound}, $S\ge0$ in $\{v > 0\}$, and $S=0$ on $\Sigma$.  We first show that, unless $v$ is flat in $\mathcal{V}$,
\begin{equation}\label{eq:s-positive-interior}
  S>0\quad\text{in }\mathcal{V}.
\end{equation}
Indeed, recalling $P=|\grad v|^2$ and using $\Delta v=\frac{\apalpha}{2}S/v$, we have
\begin{equation}\label{eq:s-equation}
  \Delta S
  +\apalpha\frac{\grad v}{v}\cdot\grad S
  -\apalpha\frac{P}{v^2}S
  =-2|D^2v|^2\le0 .
\end{equation}
The zero-order coefficient is nonpositive.  If $S$ had an interior zero, the strong maximum principle would yield $S\equiv0$ on the component.  Then $\Delta v=0$  and $|\grad v|^2\equiv1$ there, so  $D^2v\equiv0$.  This is the flat case. Hence, $ S > 0$ in $\mathcal{V}$.

Fix $x_0\in\Sigma$.  Let $ \nu=\grad v $ be the inward unit normal to $\mathcal{V}$. With the convention $H=\Delta\rho|_\Sigma$, the boundary identity in \cite[Lemma 5.1]{KarakhanyanSanzPerela2026StableConesAltPhillips} gives
\begin{equation}\label{eq:h-vrr-boundary-identity}
  H=-(1+\apalpha)v_{\nu\nu}
  \quad\text{at }x_0 .
\end{equation}
Moreover, all tangential derivatives of $v$ vanish on $\Sigma$, and $v_\nu=1$.  Therefore
\begin{equation}\label{eq:s-rho-h-boundary}
  \partial_\nu S(x_0)
  =-\partial_\nu |\grad v|^2(x_0)
  =-2v_{\nu\nu}(x_0)
  =\frac{2H(x_0)}{1+\apalpha} .
\end{equation}

We thus have to prove $\partial_\nu S(x_0)>0$.

Define
\[
  \mathcal M f := \Delta f +\apalpha\frac{\grad v}{v}\cdot\grad f -\apalpha\frac{|\grad v|^2}{v^2}f .
\]
Then $\mathcal M S\le0$ by \eqref{eq:s-equation}.  Work in Fermi coordinates $(y,\rho)$ centered at $x_0$, with $y=0$ corresponding to $x_0$.  By Proposition~\ref{prop:higher-regularity-boundary-calculus}, after shrinking the coordinate cylinder if necessary,
\[
  \frac{\partial_\rho v}{v}=\frac1\rho+O(1), \qquad \frac{|\grad v|^2}{v^2}=\frac1{\rho^2}+O\!\left(\frac1\rho\right), \qquad \Delta\rho=O(1).
\]
We have denoted here by $\partial_\rho$ the derivative along the coordinate vector field in the inward normal direction in Fermi coordinates near the free boundary. Also, since $v(y,0)=0$ and $v_\rho(y,0)=1$ along the free boundary, $\grad_y v=O(\rho^2)$.

Fix $R>0$ to work in a local coordinate patch, and for  constants $B,C>0$, set
\[
  \Phi(y,\rho)=\rho+C\rho^2-B|y|^2
\]
in
\[
  \mathcal C_{R,\eps}:=\{|y|<R,\ 0<\rho<\eps\}.
\]
For $\Phi_0(\rho)=\rho+C\rho^2$, the singular normal terms cancel:
\[
  \apalpha\frac{\partial_\rho v}{v}\Phi_0' -\apalpha\frac{|\grad v|^2}{v^2}\Phi_0 =\apalpha C+O(1),
\]
while (using that $\Delta f(\rho) = f''(\rho) +f'(\rho)\Delta\rho$):
\[
  \Delta\Phi_0=2C+O(1).
\]
Thus
\begin{equation}\label{eq:m-phi-main-term}
  \mathcal M\Phi_0=(2+\apalpha)C+O(1).
\end{equation}
The tangential Laplacian and drift terms acting on $-B|y|^2$ are bounded below by $-C_0B$ in a fixed small cylinder, while the zero-order term is nonnegative:
\[
  -\apalpha\frac{|\grad v|^2}{v^2}(-B|y|^2) =\apalpha B\frac{|\grad v|^2}{v^2}|y|^2\ge0 .
\]
Consequently
\begin{equation}\label{eq:m-tangential-penalty}
  \mathcal M(-B|y|^2)\ge -C_0B .
\end{equation}
Choose $B>0$, then choose $C$ large enough so that the main term in \eqref{eq:m-phi-main-term} dominates \eqref{eq:m-tangential-penalty} and the bounded errors.  Shrinking $\eps$ if needed, we get
\begin{equation}\label{eq:m-phi-positive}
  \mathcal M\Phi\ge1
  \quad\text{in }\mathcal C_{R,\eps}.
\end{equation}
We also take $\eps$ small enough that
\begin{equation}\label{eq:phi-lateral-negative}
  \eps+C\eps^2\le BR^2 .
\end{equation}
Then $\Phi\le0$ on the lateral face $|y|=R$, and $\Phi(y,0)=-B|y|^2\le0$ on the free-boundary face.

By \eqref{eq:s-positive-interior},
\[
  S>0 \quad\text{on}\quad {\{|y|\le R,\ \rho=\eps\}}.
\]
Since $\Phi$ is bounded above on the top face, for sufficiently small $\delta>0$ we have $w:= S-\delta\Phi\ge0$ there as well, so $w\ge0$ holds in the whole $\partial\mathcal C_{R,\eps}$, while
\[
  \mathcal M w =\mathcal M S-\delta\mathcal M\Phi \le -\delta<0 \quad\text{in }\mathcal C_{R,\eps}.
\]
By maximum principle as before (the zero-order term in $\mathcal M$ is nonpositive), we get $w\ge0$ in $ \mathcal C_{R,\eps}$. Along the inward normal line $y=0$, $\Phi(0,\rho)=\rho+C\rho^2$.  Hence
\[
  \liminf_{\rho\downarrow0}\frac{S(0,\rho)}{\rho}\ge\delta .
\]
Since $S$ is smooth up to the boundary and $S(x_0)=0$, this gives $\partial_\nu S(x_0)\ge\delta>0$, as we wanted.
\end{proof}
\begin{remark}[The conical case]
\label{rem:positive-mean-curvature-cones}
The conclusion of Propositions~\ref{prop:global-gradient-bound} and \ref{prop:positive-mean-curvature} will also be used for
classical cones. Notice that $|\nabla v|< 1$ by a directly analogous argument in this case: If, for $P = |\nabla v|^2$, $\sup_{\mathcal{V}} P>1$, then the restriction of $P$ to  
$\mathcal{V}_S$ attains a maximum $>1$ at an interior point, because $P=1$ on
the free boundary. At such a point, using \eqref{eq:s-equation} we get the contradiction. Thus $|\nabla v|\le 1$ in $V$.
\end{remark}

\section{Identities for classical cones}\label{sec:classical-cone-identities}
In this section, we will deal with classical cones with $\partial\Om\setminus\{0\}\ne\varnothing$, and prove various useful identities. This will be assumed throughout; otherwise, we refer to Proposition~\ref{prop:full-support-exclusion}.

The operator $\Jh$ will always act on homogeneous functions. We recall $\HH$ is given in Definition~\ref{def:homogeneous-hessian} and define its Euclidean components as
\[
  \mathcal A_{ij}(x):=|x|v_{ij}(x),
\]
which are 0-homogeneous. We remark that all derivatives presented here are Euclidean, unless otherwise stated.

\begin{lem} 
\label{lem:tensor-identities}
Let $n = 3$, $\gamma\in (0, 1)$. Let $u$ be a  classical cone for the $\gamma$-Alt--Phillips problem in $\R^3$, and let $v$ denote the transformed solution.

For every Euclidean unit vector $\tau\perp x$,
\begin{equation}\label{eq:p-and-t-derivatives}
  r\,\partial_\tau P=2\HH(\grad v,\tau),
  \qquad
  \partial_\tau T=-\frac1v
  \bigl(\apalpha\HH(\grad v,\tau)+T\,\partial_\tau v\bigr).
\end{equation}

Fix $e_1\in\Om$ (after a rotation) and put $I=\{2,3\}$.  All quantities below are evaluated at $e_1$.  For $i,j\in I$, set
\[
  \mathcal A_{ij}(x)=|x|v_{ij}(x), \qquad A_{ij}=\mathcal A_{ij}(e_1), \qquad T=A_{22}+A_{33}.
\]
Then
\[
  v_1=v, \qquad A_{1j}=0, \qquad T=\Delta v.
\]
and, for $\ell\in I$,
\[
  P_\ell=2\sum_{k\in I}A_{\ell k}v_k, \qquad\text{and}\qquad T_\ell =-\frac1v \left( \apalpha\sum_{k\in I}A_{\ell k}v_k+Tv_\ell \right).
\]
Finally, for $i,j\in I$, the scalar functions $\mathcal A_{ij}$ satisfy
\begin{equation}\label{eq:scalar-component-hessian-identity}
\begin{split}
  \Jh(\mathcal A_{ij})
  &=\apalpha A_{ij}
    -\frac{\apalpha}{v}\sum_{k\in I}A_{ik}A_{kj}
    +\frac{\apalpha}{v^2}
        \sum_{k\in I}(A_{ik}v_j         +A_{jk}v_i)v_k
    +\frac{2T}{v^2}v_iv_j .
\end{split}
\end{equation}
\end{lem}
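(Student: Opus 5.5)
The plan is to work directly with the Euclidean equation $v\Delta v=\frac{\apalpha}{2}(1-P)$ and differentiate it, using 1-homogeneity ($x\cdot\grad v=v$, $D^2v\,x=0$, and $\grad v_{ij}\cdot x=-v_{ij}$) to simplify at the point $e_1$.

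\textbf{Step 1: first-order identities.} For \eqref{eq:p-and-t-derivatives}, $\partial_\tau P=2v_{k\tau}v_k$, so $r\partial_\tau P=2\HH(\grad v,\tau)$ immediately. Next I would differentiate $rv\,Q=T$, or more directly $v\,\Delta v=\frac{\apalpha}{2}(1-P)$ in the direction $\tau$. This gives
\[
  \partial_\tau v\,\Delta v+v\,\partial_\tau\Delta v=-\tfrac{\apalpha}{2}\partial_\tau P .
\]
Multiplying by $r$ and using that $r$ is constant along $\tau\perp x$ at the point, so that $\partial_\tau T=r\partial_\tau\Delta v$, yields
\[
  v\,\partial_\tau T=-\apalpha\HH(\grad v,\tau)-T\partial_\tau v .
\]
At $e_1$, homogeneity gives $v_1=x\cdot\grad v=v$ and $A_{1j}=(D^2v\,e_1)_j=0$, so $T=\Delta v=A_{22}+A_{33}$. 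The coordinate formulas for $P_\ell$ and $T_\ell$ are then the specializations with $\tau=e_\ell$ (the sum over $k$ reduces to $I$ because $A_{\ell1}=0$).

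\textbf{Step 2: the second-order identity.} Differentiate the equation twice, in directions $i,j\in I$:
\[
  v_{ij}\Delta v+v_i\Delta v_j+v_j\Delta v_i+v\Delta v_{ij}=-\apalpha\bigl(v_{kij}v_k+v_{ki}v_{kj}\bigr).
\]
Then compute $\Jh(\mathcal A_{ij})=r^2\bigl(\Delta(rv_{ij})+\apalpha\frac{\grad v}{v}\cdot\grad(rv_{ij})+Q\,rv_{ij}\bigr)$ at $e_1$ with $r=1$. For this I would use:
\begin{itemize}
\item $\Delta(rv_{ij})=r\Delta v_{ij}+2\frac{x}{r}\cdot\grad v_{ij}+\frac{2}{r}v_{ij}$, where Euler's relation gives $x\cdot\grad v_{ij}=-v_{ij}$, so the last two terms cancel in dimension $3$;
\item the drift term $\apalpha\frac{v_k}{v}(\delta_{k1}v_{ij}+v_{ijk})$, whose $k=1$ part contributes $\apalpha v_{ij}$ (since $v_1=v$) and whose remaining part is $\apalpha v_{ijk}v_k/v$.
\end{itemize}
Substituting $v\Delta v_{ij}$ from the twice-differentiated equation, the $v_{kij}v_k$ terms cancel against the drift, and $v_{ij}\Delta v$ cancels against $Qv\,v_{ij}=\Delta v\,v_{ij}$.

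\textbf{Step 3: the terms $v_i\Delta v_j$ and where I expect the difficulty.} It remains to express $\Delta v_j=\partial_j\Delta v=\partial_j T$ (at $r=1$, $j\perp x$) via Step 1:
\[
  -(v_i\Delta v_j+v_j\Delta v_i)/v=\frac{1}{v^2}\Bigl(\apalpha\sum_k(A_{jk}v_i+A_{ik}v_j)v_k+2Tv_iv_j\Bigr).
\]
The remaining $-\apalpha v_{ki}v_{kj}/v$ restricts to $k\in I$, since $A_{1k}=0$. Collecting everything gives \eqref{eq:scalar-component-hessian-identity}.

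The main obstacle is bookkeeping: checking that derivatives of $r$ on the first-order terms, and the $\delta_{k1}$ contributions, produce exactly the single term $\apalpha A_{ij}$ with nothing left over. This relies on $n=3$ making the radial correction terms $2x\cdot\grad v_{ij}/r+2v_{ij}/r$ cancel, so I would verify that cancellation first.
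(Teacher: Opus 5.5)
Your proposal is correct and follows essentially the paper's proof. Both use the same homogeneity facts at $e_1$: $v_1=v$, $A_{1j}=0$, $v_{ij1}=-v_{ij}$, and the $n=3$ cancellation giving $\Delta(rv_{ij})=r\Delta v_{ij}$. Both then differentiate the equation twice and substitute the trace-gradient identity for $\partial_j\Delta v$.

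The only difference is bookkeeping, and it is equivalent:
\begin{itemize}
\item The paper differentiates $\Delta v=\tfrac{\apalpha}{2}(1-P)/v$ and uses $(\mathcal A_{ij})_1=0$ to kill the $k=1$ drift term. Its $\apalpha A_{ij}$ then comes from $v_1v_{1ij}=-v\,v_{ij}$.
\item You keep the full sum $\sum_{k=1}^3 v_kv_{ijk}$ in the drift and get $\apalpha A_{ij}$ from $\partial_k r=\delta_{k1}$. That sum must include $k=1$ for the cancellation against the twice-differentiated equation to be exact.
\end{itemize}
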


In particular, if the basis $e_2,e_3$ diagonalizes $\HH|_{e_1^\perp}$, say
\[
  A_{22}=\lambda_M, \qquad A_{33}=\lambda_m, \qquad A_{23}=0,
\]
then
\[
  T_2=-\frac{T+\apalpha \lambda_M}{v}v_2, \qquad T_3=-\frac{T+\apalpha \lambda_m}{v}v_3,
\]
\[
  \Jh(\mathcal A_{22}) =\apalpha \lambda_M-\frac{\apalpha}{v}\lambda_M^2 +\frac{2(T+\apalpha \lambda_M)}{v^2}v_2^2,
\]
\[
  \Jh(\mathcal A_{33}) =\apalpha \lambda_m-\frac{\apalpha}{v}\lambda_m^2 +\frac{2(T+\apalpha \lambda_m)}{v^2}v_3^2,
\]
and
\[
  \Jh(\mathcal A_{23}) =\frac{(2+\apalpha)T}{v^2}v_2v_3 .
\]
\begin{proof}[Proof of Lemma~\ref{lem:tensor-identities}]
The first identities \eqref{eq:p-and-t-derivatives} are immediate.

We now work at $e_1$.  Homogeneity gives
\[
  v_1=v, \qquad v_{1j}=0,
\]
and therefore $A_{1j}=0$.  Since $r=1$ at the point, $T=A_{22}+A_{33}=\Delta v$.

For $\ell\in I$,
\[
  P_\ell=2\sum_{m=1}^3v_mv_{m\ell} =2\sum_{k\in I}v_kv_{k\ell} =2\sum_{k\in I}A_{\ell k}v_k .
\]
Differentiating
\[
  T=\frac{\apalpha r}{2}\frac{1-P}{v}
\]
in a tangential direction, and using $r_\ell=0$ at $e_1$, gives
\[
  T_\ell =-\frac{\apalpha}{2v}P_\ell-\frac{T}{v}v_\ell =-\frac1v \left( \apalpha\sum_{k\in I}A_{\ell k}v_k+Tv_\ell \right).
\]
It remains to compute $\Jh(\mathcal A_{ij})$.  Since $v_{ij}$ is homogeneous of degree $-1$,
\[
  v_{ij1}=-v_{ij} \qquad\text{at }e_1.
\]
Thus
\[
  (\mathcal A_{ij})_1=v_{ij}+v_{ij1}=0.
\]
Moreover,
\[
  \Delta(rv_{ij}) =r\Delta v_{ij}+2\grad r\cdot\grad v_{ij}+v_{ij}\Delta r.
\]
At $e_1$, $\grad r=e_1$, $\Delta r=2$, and $v_{ij1}=-v_{ij}$, so the last two terms cancel and
\[
  \Delta\mathcal A_{ij}=\Delta v_{ij}=(\Delta v)_{ij}.
\]
For $k\in I$, $(\mathcal A_{ij})_k(e_1)=v_{ijk}$.  Therefore, with $F=\Delta v$,
\begin{equation}\label{eq:jh-component-before-fij}
  \Jh(\mathcal A_{ij})(e_1)
  =F_{ij}
   +\frac{\apalpha}{v}\sum_{k\in I}v_kv_{ijk}
   +\frac{T}{v}A_{ij} .
\end{equation}

Now $ F=\frac{\apalpha}{2}\frac{1-P}{v}, $ for $i\in I$, at $e_1$,
\[
  F_i =-\frac{\apalpha}{v}M_i-\frac{T}{v}v_i, \qquad\text{where}\quad M_i:=\sum_{k\in I}A_{ik}v_k.
\]
Differentiating once more gives
\[
  F_{ij} =-\frac{\apalpha}{v}\partial_j \left(\sum_{m=1}^3v_mv_{mi}\right) +\frac{\apalpha}{v^2}M_iv_j -\frac{T_j}{v}v_i -\frac{T}{v}A_{ij} +\frac{T}{v^2}v_iv_j .
\]
The derivative in the first term is
\[
\begin{split}
  \partial_j\left(\sum_{m=1}^3v_mv_{mi}\right)
  &=\sum_{m=1}^3v_{mj}v_{mi}
    +\sum_{m=1}^3v_mv_{mij}   =\sum_{k\in I}A_{kj}A_{ki}-vA_{ij}
    +\sum_{k\in I}v_kv_{ijk}.
\end{split}
\]
Substituting this into the expression for $F_{ij}$, and then into \eqref{eq:jh-component-before-fij}, cancels the terms involving $\sum v_kv_{ijk}$ and $ T A_{ij}/v$, $T A_{ij}/v$, so we get
\[
  \Jh(\mathcal A_{ij}) =\apalpha A_{ij} -\frac{\apalpha}{v}\sum_{k\in I}A_{ik}A_{kj} +\frac{\apalpha}{v^2}M_iv_j -\frac{T_j}{v}v_i +\frac{T}{v^2}v_iv_j .
\]
Using the already proved trace-gradient identity,
\[
  T_j=-\frac1v(\apalpha M_j+Tv_j),
\]
we obtain \eqref{eq:scalar-component-hessian-identity}.
\end{proof}

We next account for the motion of the tangent frame.

\begin{lem} 
\label{lem:moving-plane-corrections}
In the setting of  Lemma~\ref{lem:tensor-identities} we have
\begin{equation}\label{eq:first-derivative-trace-correction}
        T_i=(\mathcal A_{22})_i+(\mathcal A_{33})_i,
        \qquad i=2,3.
\end{equation}
Moreover,
\begin{equation}\label{eq:moving-trace-correction}
        \Jh T
        = \Jh(\mathcal A_{22})+\Jh(\mathcal A_{33})+2T.
\end{equation}
If $\lambda_M>\lambda_m$, then the largest eigenvalue
$
  \lambda_{\max}(\HH|_{x^\perp})
$
is smooth near $e_1$.  Denoting this local eigenvalue function again by $\lambda_M$, one has
\[
  (\lambda_M)_i=(\mathcal A_{22})_i, \qquad i=2,3,
\]
and
\begin{equation}\label{eq:moving-eigenvalue-correction-max}
        \Jh \lambda_M
        = \Jh(\mathcal A_{22})+2\lambda_M
        + \frac{2}{\lambda_M-\lambda_m}
        \left((\mathcal A_{22})_3^2+(\mathcal A_{33})_2^2\right),
\end{equation}
\begin{equation}\label{eq:moving-eigenvalue-correction-min}
        \Jh \lambda_m
        = \Jh(\mathcal A_{33})+2\lambda_m
        - \frac{2}{\lambda_M-\lambda_m}
        \left((\mathcal A_{22})_3^2+(\mathcal A_{33})_2^2\right).
\end{equation}
\end{lem}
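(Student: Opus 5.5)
The plan is to treat $\HH$ as a smooth field of symmetric $3\times3$ matrices $\mathcal A(x)=(\mathcal A_{ij}(x))_{i,j=1}^3$ in Euclidean coordinates, and to read off all three statements from second-order perturbation theory for its entries at $e_1$. The basic input is Euler's identity for the $1$-homogeneous $v$: $x_kv_{kj}=0$ for all $x$. Differentiating it gives $v_{ij}+x_kv_{kji}=0$, and hence at $e_1$
\[
  v_{1ji}=-v_{ij}.
\]
In particular $v_{11i}=0$ and $(\mathcal A_{1j})_i=v_{1ji}=-A_{ij}$ for $i,j\in I$. The symmetry of third derivatives then gives the Codazzi-type relations $(\mathcal A_{23})_2=(\mathcal A_{22})_3$ and $(\mathcal A_{23})_3=(\mathcal A_{33})_2$ at $e_1$.

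\textbf{Trace identities.} Since $\HH x=0$, the Euclidean trace satisfies $T=\mathcal A_{11}+\mathcal A_{22}+\mathcal A_{33}$ everywhere. At $e_1$ we have $(\mathcal A_{11})_i=r_iv_{11}+v_{11i}=0$ for $i\in I$, which gives \eqref{eq:first-derivative-trace-correction}. Since $\Jh$ is linear, \eqref{eq:moving-trace-correction} reduces to showing $\Jh(\mathcal A_{11})(e_1)=2T$.

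To prove this, note first that $\mathcal A_{11}(e_1)=0$, so the $Q$-term vanishes. The gradient of $\mathcal A_{11}$ also vanishes at $e_1$: the tangential components by the computation above, and the radial component by $0$-homogeneity. It remains to compute the Laplacian. We have
\[
  \Delta(rv_{11})=r\Delta v_{11}+2\partial_1v_{11}+2v_{11}=(\Delta v)_{11}\quad\text{at }e_1 .
\]
The function $F=\Delta v$ is homogeneous of degree $-1$, so $F_{11}(e_1)=(-1)(-2)F(e_1)=2T$.

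\textbf{Eigenvalue identities.} The eigenvalues of $\HH|_{x^\perp}$ are the eigenvalues of $\mathcal A(x)$ other than the trivial eigenvalue $0$ in the direction $x$. This trivial eigenvalue may collide with $\lambda_M$ or $\lambda_m$. To separate it, I would pass to
\[
  \mathcal B:=\mathcal A-K\frac{x\otimes x}{r^2}
\]
with $K$ large. This leaves the spectrum on $x^\perp$ unchanged and moves the $x$-eigenvalue to $-K$. If $\lambda_M>\lambda_m$, then $\lambda_M$ is a simple eigenvalue of the smooth symmetric field $\mathcal B$, with eigenvector $e_2$ at $e_1$. The first-variation formula gives $(\lambda_M)_i=(\mathcal B_{22})_i=(\mathcal A_{22})_i$. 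The standard second-variation formula gives
\[
  \Delta\lambda_M=\Delta\mathcal B_{22}+2\sum_{k\ne2}\frac{|\grad\mathcal B_{2k}|^2}{\lambda_M-\mu_k}.
\]
The drift and $Q$ terms of $\Jh$ are first order or zero order, so they pass through unchanged.

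The $k=3$ term produces the stated correction, by the Codazzi relations above. For $k=1$, with $\mu_1=-K$, we have $(\mathcal B_{12})_2=-A_{22}-K=-(\lambda_M+K)$ and $(\mathcal B_{12})_3=-A_{23}=0$. This term therefore contributes $2(\lambda_M+K)$. On the other hand $\Delta\mathcal B_{22}=\Delta\mathcal A_{22}-K\Delta(x_2^2/r^2)=\Delta\mathcal A_{22}-2K$ at $e_1$. The $K$'s cancel, leaving exactly $+2\lambda_M$, which is \eqref{eq:moving-eigenvalue-correction-max}.

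The computation for $\lambda_m$ is symmetric. Now the $k=2$ term carries the denominator $\lambda_m-\lambda_M<0$, which produces the minus sign in \eqref{eq:moving-eigenvalue-correction-min}, and the $k=1$ term again contributes $2\lambda_m$.

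\textbf{Main obstacle.} The delicate step is the bookkeeping of this $x$-direction coupling: correctly computing $\grad\mathcal B_{1k}$ via Euler's identity and checking that the $K$-dependence cancels. This is precisely where the moving-frame correction $2\lambda$ arises.
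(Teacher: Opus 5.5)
Your proof is correct. For the eigenvalue identities it takes a genuinely different route from the paper; for the trace identities the structure is essentially the same.

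\textbf{Eigenvalue identities.} The paper restricts $\HH$ to $y^\perp$ along the lines $y_k(s)=e_1+se_k$, using the rotating orthonormal frames $\tau_k(s)=(e_k-se_1)/\sqrt{1+s^2}$. Homogeneity along each line gives $\mathcal A_{1k}=-s\mathcal A_{kk}$ and $\mathcal A_{11}=s^2\mathcal A_{kk}$. This produces explicit $2\times2$ matrices $M_k(s)$, to which the paper applies the $2\times 2$ second-order perturbation formula. The correction $2\lambda_M$ then appears as $R''(0)A_{22}$, i.e.\ as the rotation of the plane $x^\perp$, and only in $(\lambda_M)_{22}$. You instead stay in the fixed Euclidean frame and make $\lambda_M$ a simple eigenvalue of the $3\times 3$ field $\mathcal A-Kx\otimes x/r^2$. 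You then read $2\lambda_M$ off as the $K$-independent remainder of the coupling to the radial eigendirection: $2(\lambda_M+K)$ from the $k=1$ term, minus $2K$ from $\Delta(x_2^2/r^2)(e_1)=2$.

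\textbf{What each buys.} Your version is frame-free and separates $\lambda_M,\lambda_m$ cleanly from the trivial radial eigenvalue $0$, even if they coincide with it. It also yields the smoothness of $\lambda_M$ near $e_1$ immediately (the paper asserts this without comment), and it carries over verbatim to higher dimensions. The paper's version only needs the $2\times 2$ formula and makes the geometric origin of the correction, the moving plane $x^\perp$, explicit.

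\textbf{Trace identity.} To get $\Jh(\mathcal A_{11})(e_1)=2T$, you use that $\Delta v$ is $(-1)$-homogeneous, so $\Delta\mathcal A_{11}(e_1)=(\Delta v)_{11}(e_1)=2T$. The paper instead extracts $(\mathcal A_{11})_{ii}=2A_{ii}$ from $\mathcal A_{11}(y_i(s))=s^2\mathcal A_{ii}(y_i(s))$. Both are one-line arguments.

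\textbf{One implicit step.} In the $k=3$ term you also need the radial component $(\mathcal A_{23})_1(e_1)=0$, together with the vanishing gradient of $x_2x_3/r^2$ at $e_1$. Both follow from $0$-homogeneity exactly as elsewhere in your argument.
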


\begin{remark}
    In the context of the present manuscript, the expression \eqref{eq:moving-eigenvalue-correction-min} will not be used. We have still recorded it here for the reader's convenience. 
\end{remark}
\begin{proof}
All quantities are evaluated at $e_1$.  Since $v$ is one-homogeneous,
\[
  \HH(x)x=0.
\]
Hence
\[
  T= \tr(\HH|_{x^\perp})=\tr\HH = \mathcal A_{11}+\mathcal A_{22}+\mathcal A_{33}.
\]
We compute the contribution of $\mathcal A_{11}$ on the Euclidean derivatives, which we want to show is zero.  Fix $i\in\{2,3\}$, and set
\[
  y_i(s):=e_1+s e_i.
\]
Since $\HH(y_i(s))y_i(s)=0$, we have
\begin{equation}
\label{eq:zero-homogeneous-identities}
\begin{split}
        \mathcal A_{1i}(y_i(s))
        +         s\mathcal A_{ii}(y_i(s))
        & =        0,\\
        \mathcal A_{11}(y_i(s))
        +        s\mathcal A_{1i}(y_i(s))
        & =
        0.
        \end{split}
\end{equation}
Therefore
\[
  \mathcal A_{11}(y_i(s)) = s^2\mathcal A_{ii}(y_i(s)).
\]
Differentiating at $s=0$ gives
\[
  (\mathcal A_{11})_i=0, \qquad (\mathcal A_{11})_{ii}=2A_{ii}, \qquad i=2,3.
\]
Consequently,
\[
  T_i = (\mathcal A_{11})_i+(\mathcal A_{22})_i+(\mathcal A_{33})_i = (\mathcal A_{22})_i+(\mathcal A_{33})_i,
\]
which proves \eqref{eq:first-derivative-trace-correction}.

We next prove the trace equality.  For any homogeneous degree-zero scalar function $f$,
\[
  \Jh f = f_{22}+f_{33} + \apalpha\frac{v_2f_2+v_3f_3}{v} + \frac{T}{v}f .
\]
Apply this to $f=\mathcal A_{11}$.  Since, at $e_1$,
\[
  \mathcal A_{11}=0, \qquad (\mathcal A_{11})_2=(\mathcal A_{11})_3=0,
\]
the drift and zero-order terms vanish, and hence
\[
  \Jh(\mathcal A_{11}) = (\mathcal A_{11})_{22}+(\mathcal A_{11})_{33} = 2A_{22}+2A_{33} = 2T.
\]
Using
\[
  T=\mathcal A_{11}+\mathcal A_{22}+\mathcal A_{33},
\]
we obtain
\[
  \Jh T = \Jh(\mathcal A_{11}) + \Jh(\mathcal A_{22}) + \Jh(\mathcal A_{33}) = \Jh(\mathcal A_{22}) + \Jh(\mathcal A_{33}) + 2T.
\]
This proves \eqref{eq:moving-trace-correction}.

It remains to prove the simple-eigenvalue correction.  Assume $\lambda_M>\lambda_m$. We first record the elementary mixed-derivative identities
\begin{equation}
  \label{eq:mixed-derivative-commutation} (\mathcal A_{23})_2=(\mathcal A_{22})_3, \qquad (\mathcal A_{23})_3=(\mathcal A_{33})_2.
\end{equation}
Indeed, since $r_2=r_3=0$ at $e_1$,
\[
  (\mathcal A_{23})_2 = \partial_2(rv_{23}) = v_{232}, \qquad (\mathcal A_{22})_3 = \partial_3(rv_{22}) = v_{223},
\]
so they are equal.  The second identity is the same.

Now compute the second derivatives of the largest eigenvalue.  For $k\in\{2,3\}$, set
\[
  y_k(s):=e_1+s e_k, \qquad R(s):=1+s^2.
\]
If $k=2$, then an orthonormal basis of $y_2(s)^\perp$ is
\[
  \tau_2(s):=\frac{e_2-se_1}{\sqrt{R(s)}}, \qquad e_3.
\]
In this basis, using \eqref{eq:zero-homogeneous-identities}, the matrix of $\HH|_{y_2(s)^\perp}$ is
\[
M_2(s)
        = \begin{pmatrix}
        R(s)\mathcal A_{22}(y_2(s))
        &
        \sqrt{R(s)}\mathcal A_{23}(y_2(s))
        \\
        \sqrt{R(s)}\mathcal A_{23}(y_2(s))
        &
        \mathcal A_{33}(y_2(s))
        \end{pmatrix}.
\]
Similarly, if $k=3$, then an orthonormal basis of $y_3(s)^\perp$ is
\[
  e_2, \qquad \tau_3(s):=\frac{e_3-se_1}{\sqrt{R(s)}},
\]
and the corresponding matrix is
\[
M_3(s)
        = \begin{pmatrix}
        \mathcal A_{22}(y_3(s))
        &
        \sqrt{R(s)}\mathcal A_{23}(y_3(s))
        \\
        \sqrt{R(s)}\mathcal A_{23}(y_3(s))
        &
        R(s)\mathcal A_{33}(y_3(s))
        \end{pmatrix}.
\]
At $s=0$, both matrices are
\[
\begin{pmatrix}
        \lambda_M & 0\\
        0 & \lambda_m
        \end{pmatrix},
        \qquad \lambda_M>\lambda_m.
\]
For a $C^2$ family of symmetric $2\times2$ matrices
\[
M(s)=
        \begin{pmatrix}
        A(s)&C(s)\\
        C(s)&B(s)
        \end{pmatrix},
        \qquad
        M(0)=
        \begin{pmatrix}
        \lambda_M&0\\
        0&\lambda_m
        \end{pmatrix},
        \qquad \lambda_M>\lambda_m,
\]
the eigenvalue branch $\lambda(s)$ with $\lambda(0)=\lambda_M$ satisfies (see, for instance, \cite[Eq. (1.73)]{Tao2012RandomMatrixTheory})
\[
  \lambda'(0)=A'(0), \qquad \lambda''(0)=A''(0)+\frac{2C'(0)^2}{\lambda_M-\lambda_m}.
\]
(For the lowest eigenvalue, the addition in $\lambda''(0)$ becomes a subtraction.) Applying this formula to $M_2$ and $M_3$, and using
\[
  R(0)=1, \qquad R'(0)=0, \qquad R''(0)=2, \qquad \mathcal A_{23}(e_1)=0,
\]
we get
\[
  (\lambda_M)_2=(\mathcal A_{22})_2, \qquad (\lambda_M)_{22} = (\mathcal A_{22})_{22} + 2\lambda_M + \frac{2(\mathcal A_{23})_2^2}{\lambda_M-\lambda_m},
\]
and
\[
  (\lambda_M)_3=(\mathcal A_{22})_3, \qquad (\lambda_M)_{33} = (\mathcal A_{22})_{33} + \frac{2(\mathcal A_{23})_3^2}{\lambda_M-\lambda_m}.
\]
Therefore, from Lemma~\ref{lem:tensor-identities},
\[
\begin{split}
        \Jh \lambda_M
        &= (\lambda_M)_{22}+(\lambda_M)_{33}
        + \apalpha\frac{v_2(\lambda_M)_2+v_3(\lambda_M)_3}{v}
        + \frac{T}{v}\lambda_M  \\
        &= \Jh(\mathcal A_{22})
        + 2\lambda_M
        + \frac{2}{\lambda_M-\lambda_m}
        \left((\mathcal A_{23})_2^2+(\mathcal A_{23})_3^2\right).
\end{split}
\]
Using the mixed-derivative identities proved above,
\[
  (\mathcal A_{23})_2=(\mathcal A_{22})_3, \qquad (\mathcal A_{23})_3=(\mathcal A_{33})_2,
\]
we obtain \eqref{eq:moving-eigenvalue-correction-max}. A symmetric argument gives \eqref{eq:moving-eigenvalue-correction-min}.
\end{proof}

The first consequence is an interior positivity and boundary-limit statement.

\begin{lem}
\label{lem:interior-trace-zero-flatness}
Let $n=3$, $\gamma\in(0,1)$.  Let $u$ be a classical cone for the $\gamma$-Alt--Phillips problem in $\R^3$, and let $v$ denote the transformed one-homogeneous solution.  Let $\mathcal{V}$ be a nonflat connected component of $\{v>0\}$, and set
\[
  \mathcal{V}_S:=\mathcal{V}\cap\Sn .
\]
Then, recalling Definition~\ref{def:homogeneous-hessian},
\[
  T = r\Delta v = \frac{\apalpha r}{2}\frac{1-|\grad v|^2}{v} >0, \qquad \lambda_M>0 \qquad\text{in}\quad \overline{\mathcal{V}}.
\]
Moreover, if $h$ denotes the inward geodesic curvature of $\partial\mathcal{V}_S$, then
\[
  h>0 \qquad\text{on }\partial\mathcal{V}_S .
\]
Finally, as $\mathcal{V}_S\ni\omega\to q\in\partial\mathcal{V}_S$,
\[
  \lambda_M(\omega)\to h(q), \qquad \lambda_m(\omega)\to-\frac{h(q)}{1+\apalpha}, \qquad \frac{T(\omega)}{\lambda_M(\omega)} \to \frac{\apalpha}{1+\apalpha}.
\]
\end{lem}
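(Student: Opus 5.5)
The plan is to derive everything from the interior sign $S=1-|\grad v|^2>0$ of Proposition~\ref{prop:positive-mean-curvature} (valid for cones by Remark~\ref{rem:positive-mean-curvature-cones}), combined with a boundary expansion of $\HH$ at the free boundary.

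\textbf{Interior positivity.} Since $T=rQv=\frac{\apalpha r}{2}\frac{S}{v}$ and $S>0$ in the nonflat component $\mathcal V$, we get $T>0$ in $\mathcal V$. Because $T=\lambda_M+\lambda_m$, this gives $2\lambda_M\ge T>0$, so $\lambda_M>0$ in the interior. Positivity on the closure, restricted to $\overline{\mathcal V}\setminus\{0\}$ where the objects are defined, will follow from the boundary limits below together with $h>0$.

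\textbf{Boundary limits.} Fix $q\in\partial\mathcal V_S$ and work at the free-boundary point $q$ (so $r=1$). At $q$ we have $\grad v=\nu$ with $\nu\perp x$, since $\partial_r v=v/r=0$ there. Hence $x^\perp$ is spanned by $\nu$ and the unit tangent $t$ to the curve $\partial\mathcal V_S$.
\begin{itemize}[label=$\circ$]
\item Off-diagonal entry: tangential derivatives of $|\grad v|^2=1$ vanish, so $v_{\nu t}=0$. Thus $\HH|_{x^\perp}$ is diagonal in $(t,\nu)$.
\item Normal entry: by \eqref{eq:h-vrr-boundary-identity}, $v_{\nu\nu}=-H/(1+\apalpha)$.
\item Tangential entry: differentiating $v=0$ twice along the boundary curve gives $v_{tt}=-v_\nu\,\kappa_t=$ the curvature of $\partial\Om$ in direction $t$, with inward sign. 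The direction $x$ is a flat direction of the cone, with $v_{xx}=0$. So $H$ equals this single principal curvature, and it coincides with the inward geodesic curvature $h(q)$ of $\partial\mathcal V_S\subset\Sn$ at $r=1$. Concretely, $v_{tt}=H=h$.
\end{itemize}
Therefore the eigenvalues at $q$ are $h$ and $-h/(1+\apalpha)$. Since $H>0$ by Proposition~\ref{prop:positive-mean-curvature}, we get $h>0$. The larger eigenvalue is then $h$, giving $\lambda_M\to h$ and $\lambda_m\to -h/(1+\apalpha)$. By continuity of eigenvalues (using $D^2v\in C$ up to the boundary, Proposition~\ref{prop:higher-regularity-boundary-calculus}), this yields the limits and
\[
  \frac{T}{\lambda_M}\to \frac{h-h/(1+\apalpha)}{h}=\frac{\apalpha}{1+\apalpha}.
\]
It also gives $T>0$ and $\lambda_M>0$ on $\partial\mathcal V\setminus\{0\}$, completing positivity on $\overline{\mathcal V}$.

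\textbf{Main obstacle.} The delicate step is the careful identification $v_{tt}=h$ with consistent sign conventions. This means checking that the mean curvature of the cone $\partial\Om$ at a point of the unit sphere equals the geodesic curvature of $\partial\mathcal V_S$ in $\Sn$ (the radial principal curvature vanishes and the normal curvature of $\Sn$ is orthogonal to $\nu$). It also means checking that $v_{tt}=-\mathrm{II}(t,t)$ has the inward-positive orientation matching $H=\Delta\rho$. I would verify this in the model case of a round cone $\partial\mathcal V_S=\{\omega_1=\cos\theta_0\}$, where the signs are explicit.
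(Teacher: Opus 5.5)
Your proposal is correct and follows the paper's proof essentially step by step. You get $T>0$ (hence $\lambda_M\ge T/2>0$) from $S>0$ via Proposition~\ref{prop:positive-mean-curvature} and Remark~\ref{rem:positive-mean-curvature-cones}, then compute the boundary form of $\HH|_{x^\perp}$ in the frame $(t,\nu)$, namely $\mathrm{diag}\bigl(h,-h/(1+\alpha)\bigr)$, and conclude by continuity of $\HH$ up to the free boundary. The differences are cosmetic: you import $v_{\nu\nu}=-H/(1+\alpha)$ from the cited identity, which the paper also rederives from the equation along the normal ray. The sign check you flag also needs no model case: $v$ and $\rho$ both vanish on $\partial\Omega$ with unit inward normal derivative, so their tangential Hessians coincide there, giving $v_{tt}=\rho_{tt}=\Delta\rho=H$ at $q$ (since $\rho_{\nu\nu}=0$ and $\rho$ is $1$-homogeneous), and $H=h$ at $r=1$ is exactly the paper's convention $\gamma''(0)\cdot e_3=-h$.
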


\begin{proof}
Set $P=|\grad v|^2$ and $S=1-P$.  By Proposition~\ref{prop:positive-mean-curvature} (see also Remark~\ref{rem:positive-mean-curvature-cones}), $S>0$ in $\mathcal{V}$ and thus $ T > 0$ as well, which by definition gives the first result. The fact that $h > 0$ comes from Proposition~\ref{prop:positive-mean-curvature} as well, and 1-homogeneity.

Let us then compute the boundary limits of the two nonradial eigenvalues of $\HH=rD^2v$.  Fix $q\in\partial\mathcal{V}_S$.  After a rotation, assume $q = e_1$, $e_2$ is tangent to $\partial\mathcal{V}_S$ at $e_1$, and $e_3$ is the inward unit normal to $\mathcal{V}_S$.
All derivatives below are ordinary Euclidean derivatives evaluated at $e_1$.  We know
\[
  v(e_1)=0, \qquad \grad v(e_1)=e_3, \qquad |\grad v|=1 \quad\text{on }\partial\mathcal{V} .
\]
Let $\gamma(s)\subset\partial\mathcal{V}_S$ be a unit-speed parametrization with
\[
  \gamma(0)=e_1, \qquad \gamma'(0)=e_2.
\]
With our sign convention for the inward geodesic curvature,
$
  \gamma''(0)\cdot e_3=-h(e_1) .
$
Since $v=0$ on the free boundary,
$
  v(\gamma(s))=0 .
$
Differentiating twice at $s=0$, we get
\begin{equation}\label{eq:boundary-v22-h}
        v_{22}(e_1)=h(e_1).
\end{equation}

Next, differentiating the boundary condition $|\grad v|^2=1$ along $\gamma$, we obtain
\begin{equation}\label{eq:boundary-v23-zero}
        v_{23}(e_1)=0 .
\end{equation}

It remains to consider $v_{33}(e_1)$. By \cite[Lemma 5.1]{KarakhanyanSanzPerela2026StableConesAltPhillips} we directly know $v_{33}(e_1) = d = -\frac{h(e_1)}{1+\apalpha}$. We still deduce it for completeness: 

Since $v$ is one-homogeneous, $v_{11}(e_1)=0 $.
Set
\[
  d:=v_{33}(e_1).
\]
 Taylor expansion gives
\[
  v(e_1+te_3) = t+\frac12dt^2+o(t^2),
\qquad\text{and}\qquad 
  |\grad v(e_1+te_3)|^2 = 1+2dt+o(t).
\]
Hence
\[
  \frac{1-|\grad v(e_1+te_3)|^2}{v(e_1+te_3)} \to -2d \qquad\text{as }t\downarrow0.
\]
Taking the limit in
\[
  \Delta v = \frac{\apalpha}{2} \frac{1-|\grad v|^2}{v}
\]
along $e_1+te_3$, and using
$
  \Delta v(e_1) = v_{11}(e_1)+v_{22}(e_1)+v_{33}(e_1) = h(e_1)+d,
$
we obtain
$
  h(e_1)+d = -\apalpha d 
$, and thus
\begin{equation}\label{eq:boundary-v33}
        v_{33}(e_1)
        = d
        = -\frac{h(e_1)}{1+\apalpha}.
\end{equation}

Combining \eqref{eq:boundary-v22-h}, \eqref{eq:boundary-v23-zero}, and \eqref{eq:boundary-v33}, the matrix of $D^2v(e_1)$ on $e_1^\perp=\operatorname{span}\{e_2,e_3\}$ is
\[
\begin{pmatrix}
        v_{22} & v_{23} \\
        v_{23} & v_{33}
        \end{pmatrix}
        = \begin{pmatrix}
        h(e_1) & 0 \\
        0 & -\dfrac{h(e_1)}{1+\apalpha}
        \end{pmatrix}.
\]
Since $|e_1|=1$, we also have
\[
  \HH(e_1)|_{e_1^\perp} = D^2v(e_1)|_{e_1^\perp}.
\]
By continuity of $\HH$ up to the regular free boundary and continuity of the plane $\omega^\perp$, the eigenvalues of $\HH(\omega)|_{\omega^\perp}$ converge to the eigenvalues of this matrix as $\mathcal{V}_S\ni\omega\to e_1$.  Since $h(e_1)>0$, the larger and smaller eigenvalues satisfy
\[
  \lambda_M(\omega)\to h(e_1), \qquad \lambda_m(\omega)\to-\frac{h(e_1)}{1+\apalpha}.
\]
Therefore
\[
  T(\omega)=\lambda_M(\omega)+\lambda_m(\omega) \to h(e_1)-\frac{h(e_1)}{1+\apalpha} = \frac{\apalpha}{1+\apalpha}h(e_1),
\]
and hence
\[
  \frac{T(\omega)}{\lambda_M(\omega)} \to \frac{\apalpha}{1+\apalpha}.
\]
Since the boundary point $q\in\partial\mathcal{V}_S$ was arbitrary, the result follows.
\end{proof}

In the following, we sharpen the lower bound on $T$. The methods used here are inspired by \cite{Hamilton1982ThreeManifoldsPositiveRicciCurvature, HuWeiYangZhou2024AlexandrovFenchelCapillary}.

\begin{prop}[Pinching inequality]
\label{prop:pinching-inequality}
In the setting of Lemma \ref{lem:interior-trace-zero-flatness} we have
\[
  \lambda_M+(1+\apalpha)\lambda_m>0\qquad\text{in}\quad \mathcal V_S.
\]
Equivalently,
\[
  T> \frac{\apalpha}{1+\apalpha}\,\lambda_M \qquad\text{in}\quad \mathcal V_S.
\]
\end{prop}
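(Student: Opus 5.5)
The plan is a maximum-principle argument on the spherical domain $\mathcal V_S$ applied to the quotient
\[
  Y:=\frac{T}{\lambda_M}-\frac{\apalpha}{1+\apalpha},
\]
which is well defined by Lemma~\ref{lem:interior-trace-zero-flatness}, since $\lambda_M>0$ on $\overline{\mathcal V}$. The claim $\lambda_M+(1+\apalpha)\lambda_m>0$ is exactly $Y>0$ in $\mathcal V_S$, because $(1+\apalpha)T-\apalpha\lambda_M=\lambda_M+(1+\apalpha)\lambda_m$.

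By the last part of Lemma~\ref{lem:interior-trace-zero-flatness}, $Y(\omega)\to0$ as $\omega\to\partial\mathcal V_S$, so $Y$ extends continuously by $0$ to $\partial\mathcal V_S$. Suppose $\inf_{\mathcal V_S}Y\le0$. Since $Y$ vanishes on the boundary, either $Y\equiv0$ or $\min_{\overline{\mathcal V_S}}Y$ is attained at an interior point $\omega_0$. I would rotate so that $\omega_0=e_1$ and choose $e_2,e_3$ diagonalizing $\HH|_{e_1^\perp}$, so that all formulas of Lemma~\ref{lem:tensor-identities} apply at that point.

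First I would rule out a double eigenvalue at $e_1$. If $\lambda_M=\lambda_m$, then $T=2\lambda_M$ and $Y=2-\frac{\apalpha}{1+\apalpha}>0$, which contradicts $Y\le0$ there. Hence $\lambda_M$ is smooth near $e_1$. Moreover, the correction term in \eqref{eq:moving-eigenvalue-correction-max} is nonnegative, so
\[
  \Jh\lambda_M\ \ge\ \Jh(\mathcal A_{22})+2\lambda_M .
\]
Combined with \eqref{eq:moving-trace-correction}, this gives explicit expressions for $\Jh T$ and a lower bound for $\Jh\lambda_M$ in terms of $\lambda_M,\lambda_m,v,v_2,v_3$, via the diagonal formulas following Lemma~\ref{lem:tensor-identities}.

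Next I would use the first-order conditions at the minimum. The condition $\grad Y=0$ gives $T_i=(Y+c)(\lambda_M)_i$ for $i=2,3$, where $c=\frac{\apalpha}{1+\apalpha}$. By Lemma~\ref{lem:moving-plane-corrections}, $(\lambda_M)_i=(\mathcal A_{22})_i$ and $T_i=(\mathcal A_{22})_i+(\mathcal A_{33})_i$. In addition, $T_2=-\frac{T+\apalpha\lambda_M}{v}v_2$ and $T_3=-\frac{T+\apalpha\lambda_m}{v}v_3$. Together these determine $(\mathcal A_{22})_i$ and $(\mathcal A_{33})_i$ in terms of $v_2,v_3$, which should be enough to control the gradient cross terms.

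For the second-order condition I would compute
\[
  \Jh(\lambda_M Y)=\Jh T-c\,\Jh\lambda_M,
\]
expand the left side as $\lambda_M\Jh Y+Y\Jh\lambda_M+2\grad\lambda_M\cdot\grad Y-\frac{T}{v}\lambda_M Y$ (the zero-order term of $\Jh$ being counted once), and note that $\grad Y=0$ and that the second-order part of $\Jh Y$ is nonnegative at a minimum. The goal is to show that at a point where $Y\le0$, the expression $\Jh T-(Y+c)\Jh\lambda_M$ minus the zero-order remainder is strictly negative. The ingredients are:
\begin{itemize}
  \item the term $\apalpha\lambda_M-\frac{\apalpha}{v}\lambda_M^2$ coming from $\Jh(\mathcal A_{22})$;
  \item the terms $\frac{2(T+\apalpha\lambda_\bullet)}{v^2}v_\bullet^2$;
  \item the extra $2T$ versus $2\lambda_M$;
  \item the inequality $T>0$ from Lemma~\ref{lem:interior-trace-zero-flatness}.
\end{itemize}
This sign bookkeeping is the main obstacle. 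The $v_3^2$ term carries the factor $T+\apalpha\lambda_m$, and the combination $\lambda_M+(1+\apalpha)\lambda_m$ appears precisely because $T+\apalpha\lambda_m=\lambda_M+(1+\apalpha)\lambda_m$. So I expect the constant $c=\frac{\apalpha}{1+\apalpha}$ to be exactly what makes the dangerous terms carry a factor of $Y$ and close. If the pointwise minimum argument does not close directly, my first fallback is to multiply $Y$ by a positive weight such as $v^{\epsilon}$ or $\psi^{\epsilon}$, in the spirit of Hamilton's pinching, to absorb the bad drift terms.

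Finally, the degenerate case $Y\ge0$ with an interior zero (including $Y\equiv0$) has to be excluded. Here I would use the same computation as a strong maximum principle, via Hopf's argument, for the inequality satisfied by $Y$. This gives $Y\equiv0$ on $\mathcal V_S$. Then the gradient relations force $v_2=v_3=0$ wherever $\lambda_M\ne\lambda_m$, so that $\grad_{\Sn}\psi\equiv0$ on an open set. That is incompatible with $\psi$ vanishing on $\partial\mathcal V_S$ while being positive inside, unless the component is flat, which is excluded by hypothesis.
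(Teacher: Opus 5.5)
Your overall plan is the paper's: take an interior minimum of $z=T/\lambda_M=Y+c$, note that the eigenvalue is simple there, apply $\Jh-r^2Q$, and feed in Lemmas~\ref{lem:tensor-identities} and \ref{lem:moving-plane-corrections}. The gap is one concrete step. You use that the correction term in \eqref{eq:moving-eigenvalue-correction-max} is nonnegative to replace $\Jh\lambda_M$ by $\Jh(\mathcal A_{22})+2\lambda_M$, and that discards the only term that can close the argument. Take the minimum point, with $d=\lambda_M-\lambda_m$, $\tau_1=T+\apalpha\lambda_M$, $\tau_2=T+\apalpha\lambda_m$ and $0<z\le c$. The exact identity there is
\[
\lambda_M(\Jh-r^2Q)Y=\Jh T-z\,\Jh\lambda_M=\frac{\apalpha\lambda_m d}{v}+\frac{2}{v^2}\bigl((1-z)\tau_1v_2^2+\tau_2v_3^2\bigr)-\frac{2z}{d}\bigl((\mathcal A_{22})_3^2+(\mathcal A_{33})_2^2\bigr).
\]
If you drop the last group, the term $2(1-z)\tau_1v_2^2/v^2>0$ survives. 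Nothing at the minimum point controls it against the only negative term $\apalpha\lambda_md/v$. Near $\partial\mathcal V_S$, for instance, $v\to0$ while $|\grad v|\to1$, so this positive term can dominate. Also, once the correction is dropped, no derivatives of $\mathcal A$ remain: the cross term $2\grad\lambda_M\cdot\grad Y$ vanishes. So the first-order conditions you list have nothing left to act on.

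The fix is to keep the correction term, which enters with the favorable coefficient $-z<0$. From $\grad Y=0$ you have $T_2=z(\lambda_M)_2$. Combine this with $T_2=(\mathcal A_{22})_2+(\mathcal A_{33})_2$, $(\lambda_M)_2=(\mathcal A_{22})_2$ and $T_2=-\tau_1v_2/v$. This gives $(\mathcal A_{33})_2=\frac{1-z}{z}\frac{\tau_1}{v}v_2$, and therefore
\[
\frac{2(1-z)\tau_1}{v^2}v_2^2-\frac{2z}{d}(\mathcal A_{33})_2^2=\frac{2\tau_2}{v^2}\,\frac{(1-z)(z+\apalpha)}{z(2-z)}\,v_2^2,\qquad \tau_2=(1+\apalpha)\lambda_M Y\le0 .
\]
So the dangerous term acquires the factor $Y$ through the rotation of the eigenframe, not through a weight like $v^\epsilon$. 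After this substitution every term is $\le0$. The first term is strictly negative, since $\lambda_m=\lambda_M(z-1)<0$, which contradicts $(\Jh-r^2Q)Y\ge0$ at the minimum. The same computation applies at an interior zero of $Y$ that is a minimum: there $\tau_2=0$, but $\apalpha\lambda_md/v<0$ still. So strictness needs no Hopf or strong-maximum-principle step. Your claim that $Y\equiv0$ forces $v_2=v_3=0$ does not follow from the gradient relations, and it is not needed. A minor point: ``suppose $\inf Y\le0$'' is vacuous because $Y\to0$ at $\partial\mathcal V_S$; the right hypothesis is $Y\le0$ at some interior point.
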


\begin{proof}
Recall $\mathcal V$ is nonflat and put $c=\apalpha/(1+\apalpha)$.  Suppose, toward a contradiction for the non-strict inequalities first, that
\[
  U:=\{x\in \mathcal{V}:T(x)<c\,\lambda_M(x)\}
\]
is nonempty.  All quantities are homogeneous of degree zero, so we work on the spherical link $\mathcal{V}_S = \mathcal{V}\cap\Sn$.  By Lemma~\ref{lem:interior-trace-zero-flatness}, $T>0$ and $\lambda_M>0$ at all interior points.  On $U$, set $z=T/\lambda_M$.  Since $z<c<1$,
\[
  \lambda_m=T-\lambda_M=\lambda_M(z-1)<0<\lambda_M.
\]
Thus $\lambda_M$ is a simple eigenvalue and $z$ is smooth on $U$.

Since \(z\to c\) along \(\partial U\), while \(z<c\) in \(U\), the function \(z\) achieves its minimum at an interior point of \(U\). At this point $0<z<c$. Let
\[
  \Lin_{h,0}:=\Jh-r^2Q.
\]
At the minimum, $\Lin_{h,0}z\ge0$.  After a rotation, we may assume that the minimum point is $e_1$, and that the tangent basis $e_2,e_3$ diagonalizes $\HH|_{e_1^\perp}$:
\[
  A_{22}=\lambda_M, \qquad A_{33}=\lambda_m, \qquad A_{23}=0.
\]
Set
\[
  d=\lambda_M-\lambda_m=\lambda_M(2-z), \quad \tau_1=T+\apalpha \lambda_M=\lambda_M(z+\apalpha), \quad \tau_2=T+\apalpha \lambda_m=\lambda_M((1+\apalpha)z-\apalpha).
\]
Then $\tau_2<0$.  Since $\grad z=0$ at the minimum,
\[
  \lambda_M\Lin_{h,0}z=\Jh T-z\Jh \lambda_M .
\]
Let us rename  the variables to act as coefficients in the following computations: 
\[
  \xi=v_2, \qquad \eta=v_3, \qquad n=(\mathcal A_{22})_3, \qquad \ell=(\mathcal A_{33})_2.
\]
By Lemma~\ref{lem:tensor-identities} and the moving-plane correction \eqref{eq:moving-trace-correction},
\[
  \Jh T =(2+\apalpha)T-\frac{\apalpha}{v}(\lambda_M^2+\lambda_m^2) +\frac{2}{v^2}(\tau_1\xi^2+\tau_2\eta^2).
\]
By Lemma~\ref{lem:tensor-identities} and \eqref{eq:moving-eigenvalue-correction-max},
\begin{equation}
  \label{eq:jh-lambda-max} \Jh \lambda_M =(2+\apalpha)\lambda_M-\frac{\apalpha}{v}\lambda_M^2 +\frac{2\tau_1}{v^2}\xi^2 +\frac{2}{d}(n^2+\ell^2).
\end{equation}
Subtracting $z\Jh \lambda_M$, and using $\lambda_m=\lambda_M(z-1)$, gives
\begin{equation}\label{eq:pinching-intermediate}
  \lambda_M\Lin_{h,0}z
  =\frac{\apalpha \lambda_m d}{v}
  +\frac{2}{v^2}\bigl((1-z)\tau_1\xi^2+\tau_2\eta^2\bigr)
  -\frac{2z}{d}(n^2+\ell^2).
\end{equation}
It remains only to eliminate $\ell$.  From $\grad T=z\grad \lambda_M$ and Lemma~\ref{lem:tensor-identities},
\[
  T_2=-\frac{\tau_1}{v}\xi, \qquad (\lambda_M)_2=\frac{T_2}{z}=-\frac{\tau_1}{zv}\xi.
\]
Since $T_2=(\lambda_M)_2+\ell$ (by Lemma~\ref{lem:moving-plane-corrections}),
\[
  \ell=T_2-(\lambda_M)_2=\frac{1-z}{z}\frac{\tau_1}{v}\xi .
\]
Substituting this into \eqref{eq:pinching-intermediate} gives
\[
\begin{split}
  \lambda_M\Lin_{h,0}z
  &=\frac{\apalpha \lambda_M^2(z-1)(2-z)}{v}  +
  \frac{2\tau_2}{v^2}
  \left[
    \eta^2+
    \frac{(1-z)(z+\apalpha)}{z(2-z)}\xi^2
  \right]
  -\frac{2z}{\lambda_M(2-z)}n^2 .
\end{split}
\]
Because $0<z<c=\apalpha/(1+\apalpha)$, we have $z-1<0$ and $\tau_2<0$.  The bracket is nonnegative, the last term is nonpositive, and the first term is strictly negative.  Hence $\lambda_M\Lin_{h,0}z<0$, contradicting $\Lin_{h,0}z\ge0$.  Thus $U$ is empty, and the pinching inequality in the non-strict case follows.

 Finally, if equality were to occur in the interior, by the previous argument, it would be at a local minimum, and the same computation as before yields a contradiction again. Thus, the inequality is strict. 
\end{proof}

\section{Properties of the test function}
\label{sec:properties-test-functions}
We now turn the pinching estimate into a differential inequality for the highest eigenvalue, which informs the choice of test function. 

\begin{lem}
\label{lem:simple-largest-eigenvalue-inequality}
In the setting of Lemma \ref{lem:interior-trace-zero-flatness} and for some
\[
  \frac{\apalpha}{2+\apalpha}\le p\le\frac{1}{2+\apalpha},
\]
we have that
\[
  \Jh(\lambda_M^p)\ge p(2+\apalpha)\lambda_M^p
\]
holds classically in  $\{v > 0\} $ and $ \lambda_M > \lambda_m$.
\end{lem}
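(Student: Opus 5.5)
The plan is to work at an arbitrary point of $\{v>0\}$ where $\lambda_M>\lambda_m$, rotated to $e_1$ with $e_2,e_3$ diagonalizing $\HH|_{e_1^\perp}$ as in Lemma~\ref{lem:tensor-identities}. There $\lambda_M$ is smooth by Lemma~\ref{lem:moving-plane-corrections}, and $\lambda_M>0$ by Lemma~\ref{lem:interior-trace-zero-flatness}, so $\lambda_M^p$ is smooth. We then expand $\Jh(\lambda_M^p)$ by the chain rule. At $e_1$ we have $\Jh f=\Lin_{h,0}f+\frac{T}{v}f$, with $\Lin_{h,0}$ a second-order operator without zero-order term. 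Hence
\[
\Jh(\lambda_M^p)=p\lambda_M^{p-1}\Jh\lambda_M+p(p-1)\lambda_M^{p-2}|\grad\lambda_M|^2+(1-p)\frac{T}{v}\lambda_M^p .
\]
Into this we insert \eqref{eq:jh-lambda-max} for $\Jh\lambda_M$. We also use the gradient formulas $(\lambda_M)_3=(\mathcal A_{22})_3=:n$ and $(\lambda_M)_2=T_2-\ell$, where $\ell=(\mathcal A_{33})_2$ and $T_2=-\tau_1\xi/v$ (Lemmas~\ref{lem:tensor-identities} and \ref{lem:moving-plane-corrections}, notation as in the proof of Proposition~\ref{prop:pinching-inequality}).

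After dividing by $p\lambda_M^{p-1}$ and subtracting $(2+\apalpha)\lambda_M$, the remainder splits into three independent groups, each of which must be shown nonnegative. Normalize $\lambda_M=1$ by homogeneity in the Hessian variables and write $z=T/\lambda_M$, $q=1-p$.

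\emph{Zero-order group.} This group is $\frac{\lambda_M}{v}\bigl(-\apalpha\lambda_M+\frac{q}{p}T\bigr)$. It is nonnegative iff $z\ge p\apalpha/(1-p)$. The upper bound $p\le 1/(2+\apalpha)$ gives $p\apalpha/(1-p)\le\apalpha/(1+\apalpha)$, so Proposition~\ref{prop:pinching-inequality} suffices.

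\emph{The $n^2$ group.} This group is $n^2\bigl(2/d-q/\lambda_M\bigr)$ with $d=\lambda_M(2-z)$. It is nonnegative since $2\ge q(2-z)$, because $z>0$.

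\emph{The mixed group in $(\xi/v,\ell)$.} This group is
\[
2\tau_1\frac{\xi^2}{v^2}+\frac{2\ell^2}{d}-\frac{q}{\lambda_M}\Bigl(\tau_1\frac{\xi}{v}+\ell\Bigr)^2 .
\]
I expect this to be the main obstacle, since it is a genuinely indefinite-looking quadratic form. It is nonnegative iff its diagonal coefficients are nonnegative and the discriminant condition holds:
\[
\bigl(2\tau_1-q\tau_1^2\bigr)\bigl(2/d-q\bigr)\ge q^2\tau_1^2,\qquad \tau_1=z+\apalpha>0 .
\]
Expanding and dividing by $2\tau_1/d$, the condition reduces to $2\ge q(d+\tau_1)=q(2+\apalpha)$, that is $p\ge\apalpha/(2+\apalpha)$. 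This is exactly the lower bound in the statement. For the diagonal coefficients, $2/d-q\ge0$ was already shown, and $2-q\tau_1\ge0$ follows from $q(z+\apalpha)\le q(2+\apalpha)\le 2$, using $z<1$ (as $\lambda_m<\lambda_M$) and the same bound.

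Summing the three groups gives $\Jh(\lambda_M^p)\ge p(2+\apalpha)\lambda_M^p$ pointwise wherever $\lambda_M>\lambda_m$. The only delicate point is to track the discriminant computation carefully, including the case where the admissible interval for $p$ is nonempty, which requires $\apalpha\le1$. The strictness of the pinching is not needed here, only the nonstrict bound.
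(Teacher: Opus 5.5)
Your proposal is correct and is essentially the paper's argument. It uses the same chain rule for $\Jh(\lambda_M^p)$, the same insertion of \eqref{eq:jh-lambda-max} and $(\lambda_M)_2=T_2-\ell$, and arrives at the same $2\times2$ quadratic form; you test that form by its discriminant in the variables $(\xi/v,\ell)$, whereas the paper substitutes $\ell=-X-m$ and completes the square in $m$, reaching the identical condition $p(2+\alpha)\ge\alpha$.

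One small slip: $\lambda_m<\lambda_M$ gives $z<2$, not $z<1$, and indeed $z>1$ wherever $\lambda_m>0$. However, $z\le 2$ is exactly what you need for $q(z+\alpha)\le q(2+\alpha)\le 2$. That diagonal check is redundant anyway, because $2/d-q>0$ and the determinant is nonnegative.
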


\begin{proof}
By scale invariance, rotate and dilate so that the point is $e_1$, and choose $e_2,e_3$ so that
\[
  A_{22}=\lambda_M, \qquad A_{33}=\lambda_m, \qquad A_{23}=0.
\]
Put $d=\lambda_M-\lambda_m$, and write, as in Proposition~\ref{prop:pinching-inequality}
\[
  m=(\mathcal A_{22})_2, \qquad n=(\mathcal A_{22})_3, \qquad \ell=(\mathcal A_{33})_2, \qquad \tau_1=T+\apalpha \lambda_M, \qquad X=\frac{\tau_1v_2}{v}.
\]
For $\lambda_M^p$, the zero-order term in $\Jh$ changes the usual chain rule to
\begin{equation}
  \label{eq:jh-lambda-max-chain-rule} \Jh(\lambda_M^p) =p \lambda_M^{p-1}\Jh \lambda_M+r^2p(p-1)\lambda_M^{p-2}|\grad \lambda_M|^2 +(1-p)r^2Q\lambda_M^p .
\end{equation}
Again, as in the proof of Lemma~\ref{lem:interior-trace-zero-flatness}
\[
  m+\ell=T_2=-\frac{1}{v}(\apalpha \lambda_M v_2+Tv_2)=-X, \qquad \ell=-X-m.
\]
Since $\lambda_M>\lambda_m$ and $ \lambda_M > 0$, the largest eigenvalue is smooth and simple near the point. In a local orthonormal frame which diagonalizes $B=A|_{e_1^\perp}$ at $e_1$, the first-variation formula for a simple eigenvalue gives $ \nabla_k \lambda_M=(\nabla_k B)_{22} = (\nabla_k \mathcal{A})_{22}. $ Thus, with $m$ and $n$ as above (recall \eqref{eq:mixed-derivative-commutation}) we have
\[
  |\nabla \lambda_M|^2=m^2+n^2.
\]
(Notice that the variation of the eigendirection appears only in the second derivative.)

We have
\[
\begin{split}
\frac{\Jh(\lambda_M^p)-p(2+\apalpha)\lambda_M^p}{\lambda_M^p}
&= \frac{(1-p)T-p\apalpha \lambda_M}{v}
+\frac{2pX^2}{\lambda_M\tau_1}
+\frac{2p}{\lambda_M d}(n^2+\ell^2)
-\frac{p(1-p)}{\lambda_M^2}(m^2+n^2).
\end{split}
\]
Substituting $\ell=-X-m$ and grouping terms gives
\[
\begin{split}
\frac{\Jh(\lambda_M^p)-p(2+\apalpha)\lambda_M^p}{\lambda_M^p}
&= \frac{(1-p)T-p\apalpha \lambda_M}{v}
+\left(\frac{2p}{\lambda_M d}-\frac{p(1-p)}{\lambda_M^2}\right)n^2  \\
&\quad
+\frac{2p}{\lambda_M d}(m+X)^2
-\frac{p(1-p)}{\lambda_M^2}m^2
+\frac{2pX^2}{\lambda_M\tau_1}.
\end{split}
\]
Set
\[
  K_p=(1+p)\lambda_M+(1-p)\lambda_m=T+p(\lambda_M-\lambda_m).
\]
Since $d=\lambda_M-\lambda_m$, we have $ 2\lambda_M-(1-p)d  =     (1+p)\lambda_M+(1-p)\lambda_m   =    K_p, $ and thus the coefficient for $n^2$ is $\tfrac{pK_p}{\lambda_M^2 d}$. The same occurs for the $m^2$-term:
\[
\begin{split}
\frac{2p}{\lambda_M d}(m+X)^2
-\frac{p(1-p)}{\lambda_M^2}m^2
&= \frac{pK_p}{\lambda_M^2 d}m^2
+\frac{4p}{\lambda_M d}mX
+\frac{2p}{\lambda_M d}X^2 .
\end{split}
\]
Completing the square in $m$ gives
\[
\begin{split}
\frac{pK_p}{\lambda_M^2 d}m^2
+\frac{4p}{\lambda_M d}mX
&= \frac{pK_p}{\lambda_M^2 d}
\left(m+\frac{2\lambda_M X}{K_p}\right)^2
-\frac{4pX^2}{dK_p}.
\end{split}
\]
Hence the remaining $X^2$-coefficient is
\[
  \frac{2p}{\lambda_M d} +\frac{2p}{\lambda_M\tau_1} -\frac{4p}{dK_p}.
\]
Observe that
\[
\begin{split}
\frac1{\lambda_M d}+\frac1{\lambda_M\tau_1}-\frac2{dK_p}
&= \frac1\lambda_M\left(\frac1{\tau_1}
-\frac{1-p}{K_p}\right) =
\frac{K_p-(1-p)\tau_1}{\lambda_M\tau_1K_p}.
\end{split}
\]
Using $K_p=T+pd$, $\tau_1=T+\apalpha \lambda_M$, and $T+d=2\lambda_M$, we get
\[
\begin{split}
K_p-(1-p)\tau_1
 = \lambda_M\bigl(p(2+\apalpha)-\apalpha\bigr).
\end{split}
\]
Combining the previous computations yields
\begin{equation}
\label{eq:simple-no-discard}
\begin{split}
\frac{\Jh(\lambda_M^p)-p(2+\apalpha)\lambda_M^p}{\lambda_M^p}
&=\frac{(1-p)T-p\apalpha \lambda_M}{v}
  +\frac{pK_p}{\lambda_M^2 d}n^2 \\
&\quad
  +\frac{pK_p}{\lambda_M^2 d}
   \left(m+\frac{2\lambda_M X}{K_p}\right)^2
  +\frac{2pX^2}{\tau_1K_p}
    \bigl(p(2+\apalpha)-\apalpha\bigr).
\end{split}
\end{equation}
We want to obtain a nonnegative sign on the right-hand side. The pinching inequality, Proposition~\ref{prop:pinching-inequality}, gives $T\ge \apalpha \lambda_M/(1+\apalpha)>0$.  We also know $K_p>0$,  $d>0$, and $\tau_1=T+\apalpha \lambda_M>0$.  Moreover,
\[
  (1-p)T-p\apalpha \lambda_M \ge \frac{\apalpha \lambda_M}{1+\apalpha}\bigl(1-p(2+\apalpha)\bigr) \ge0
\]
when $p\le1/(2+\apalpha)$. Finally,  the last term $p(2+\apalpha)-\apalpha$ is nonnegative when $p\ge\apalpha/(2+\apalpha)$.  Thus every term on the right-hand side is nonnegative, which proves the lemma.
\end{proof}

The next lemma removes the simple-eigenvalue restriction.

\begin{lem} 
\label{lem:largest-eigenvalue-no-negative-defect}
In the setting of Lemma \ref{lem:interior-trace-zero-flatness}, let $F=\lambda_M^p$ with
\[
  \frac{\apalpha}{2+\apalpha}\le p\le\frac{1}{2+\apalpha}.
\]
Then,
\[
  \Jh F-p(2+\apalpha)F\ge0
\]
holds locally in the viscosity sense, and hence locally in the distributional sense, in  $\Om\setminus\{0\}$.
\end{lem}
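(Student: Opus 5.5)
The plan is to split $\Om_S=\Om\cap\Sn$ into the open set $\{\lambda_M>\lambda_m\}$, where Lemma~\ref{lem:simple-largest-eigenvalue-inequality} already gives the inequality classically, and the umbilic set $\mathcal U:=\{\lambda_M=\lambda_m\}$. At points of $\mathcal U$ we verify the viscosity inequality with lower barriers.

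First observe that on $\mathcal U$ we have $\lambda_M=T/2>0$ by Lemma~\ref{lem:interior-trace-zero-flatness}, so $F=\lambda_M^p$ is positive and locally Lipschitz there. Fix $x_0\in\mathcal U$, rescaled to $e_1$, and let $\phi$ be a smooth function touching $F$ from below at $e_1$. We use the standard representation
\[
\lambda_M(x)=\max_{\tau\in x^\perp,\,|\tau|=1}\HH(x)(\tau,\tau).
\]
Choose any unit $\tau_0\in e_1^\perp$, say $\tau_0=e_2$, and transport it to nearby points by the moving frame $\tau(x)$ from the proof of Lemma~\ref{lem:moving-plane-corrections}, e.g.\ $\tau(y)=(e_2-(y\cdot e_2)e_1)/|\cdot|$ on the sphere. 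The smooth function $\mu(x):=\HH(x)(\tau(x),\tau(x))$ satisfies $\mu\le\lambda_M$ with equality at $e_1$, so $\mu^p$ touches $F$ from below at $e_1$. It therefore suffices to show $\Jh(\mu^p)-p(2+\apalpha)\mu^p\ge0$ at $e_1$ for a suitable choice of $\tau_0$.

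This is a subtlety worth stating precisely. For a viscosity supersolution-type statement (a subsolution of $\Jh-c$) we need the inequality at test functions touching from above. Since $F$ is a max of smooth functions, touching from above at a point of $\mathcal U$ forces $\phi\ge\mu_{\tau}^p$ for every $\tau$. In particular $\nabla\phi=\nabla\mu^p_\tau$ and $D^2\phi\ge D^2\mu^p_\tau$ in the relevant directions, for every admissible smooth selection $\mu_\tau$. Hence $\Jh\phi\ge\Jh(\mu_\tau^p)$ at $e_1$, since $\Jh$ is elliptic and the zero-order and drift terms agree. It is therefore enough to find one $\tau$ with $\Jh(\mu_\tau^p)\ge p(2+\apalpha)\mu_\tau^p$ at $e_1$.

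The computation for $\mu=\mathcal A_{22}$ with the moving frame reproduces the formulas of Lemma~\ref{lem:moving-plane-corrections}, but without the term $\frac{2}{d}(n^2+\ell^2)$, which is singular as $d\to0$. At $e_1$ we have $d=0$, $\lambda_M=\lambda_m=T/2$, and $\tau_1=T(1+\apalpha/2)$. Redoing the algebra of \eqref{eq:simple-no-discard} with the $1/d$ terms omitted, we need
\[
\frac{(1-p)T-p\apalpha\lambda_M}{v}+\frac{2pX^2}{\lambda_M\tau_1}-\frac{p(1-p)}{\lambda_M^2}(m^2+n^2)\ge0 .
\]
Here the gradient terms do not obviously have a sign, and this is the main obstacle. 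The remedy is to exploit the freedom in $\tau_0$ and the eigenvalue structure at a double eigenvalue. Two facts are available:
\begin{itemize}
\item Since $\lambda_M-\lambda_m\ge0$ vanishes at $e_1$, the traceless part of $\HH|_{x^\perp}$ vanishes at $e_1$ to first order along directions where it is $C^1$. Its derivative $\nabla(\mathcal A_{22}-\mathcal A_{33})$ need not vanish, but $\lambda_M-\lambda_m=|\text{traceless part}|$.
\item Because $F$ is touched from above by $\phi$, the gradient $\nabla\phi$ must equal $\nabla\mu_\tau^p$ for all $\tau$ simultaneously. This forces $\nabla(\mathcal A_{22}-\mathcal A_{33})=\nabla\mathcal A_{23}=0$ at $e_1$, since otherwise the max of the smooth family has a cone singularity and no smooth function touches it from above.
\end{itemize}
Hence $m=\ell$ and $n=(\mathcal A_{33})_3$, together with $(\mathcal A_{23})_i=0$, which by \eqref{eq:mixed-derivative-commutation} gives $n=\ell$ and $(\mathcal A_{22})_3=(\mathcal A_{33})_2=0$. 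With $m+\ell=-X$ this yields $m=\ell=-X/2$ and $n=0$. The inequality then reduces to
\[
\frac{(1-p)T-p\apalpha\lambda_M}{v}+pX^2\left(\frac{2}{\lambda_M\tau_1}-\frac{(1-p)}{4\lambda_M^2}\right)\ge0,
\]
and with $\tau_1=\lambda_M(2+\apalpha)$ the bracket equals $\lambda_M^{-2}\bigl(\tfrac{2}{2+\apalpha}-\tfrac{1-p}{4}\bigr)>0$. The first term is nonnegative by the pinching of Proposition~\ref{prop:pinching-inequality} and $p\le 1/(2+\apalpha)$, exactly as in Lemma~\ref{lem:simple-largest-eigenvalue-inequality}.

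Finally, viscosity subsolutions of the linear uniformly elliptic operator $\Jh-p(2+\apalpha)$, which has smooth coefficients in the interior, are distributional subsolutions. We get this by the standard equivalence for continuous functions, for instance via Ishii's theorem or by comparison with mollified solutions on small balls. This yields the distributional statement in $\Om\setminus\{0\}$.
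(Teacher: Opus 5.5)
Your argument is correct, but at the umbilic points it takes a different route from the paper.

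\textbf{The paper's route.} It uses the single smooth lower barrier $T/2\le\lambda_M$, with equality exactly where $\lambda_M=\lambda_m$ because $x^\perp$ is two-dimensional. If $\varphi=\sigma^p$ touches $F$ from above at $e_1$, then $2\sigma-T$ has a local minimum zero there. This gives $\Jh\sigma\ge\frac12\Jh T$ and $2\nabla\sigma=\nabla T$. Both right-hand sides are already explicit from \eqref{eq:moving-trace-correction} and the trace-gradient identity, so \eqref{eq:nonsimple-no-discard} follows by direct substitution.

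\textbf{Your route.} You use the directional barriers $\HH(\tau,\tau)$ transported by the moving frame. This needs the second-variation correction $+2\lambda_M$ and leaves the gradient terms $m,n$ uncontrolled. You remove them with a correct observation: any upper test function must share its gradient with every directional barrier, so $(\mathcal A_{22}-\mathcal A_{33})_k=(\mathcal A_{23})_k=0$.

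\textbf{Comparison.} The paper's argument is shorter and uses only identities already proved. Yours is more structural, in the spirit of Hamilton-type max-of-smooth-functions arguments. It would also extend to higher dimensions, where a multiple top eigenvalue need not make $\HH|_{x^\perp}$ a multiple of the identity, so no trace barrier is available.

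\textbf{Minor slips.}
\begin{itemize}
\item Combined with \eqref{eq:mixed-derivative-commutation}, your constraints give $\ell=(\mathcal A_{33})_2=(\mathcal A_{23})_3=0$. Hence $m=\ell=n=(\mathcal A_{33})_3=0$, so $\nabla T=0$, and the trace-gradient identity then forces $\nabla_\perp v=0$; in particular $X=0$. Writing $m=\ell=-X/2$ is consistent but hides this. Your final inequality holds for every $X$ anyway.
\item Your opening paragraph uses test functions touching from below, which is the wrong direction. You correct this afterwards.
\item Your explicit frame formula is not tangent to the sphere away from $e_1$. Use the normalized projection of $e_2$ onto $x^\perp$, which is the paper's frame.
\end{itemize}
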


\begin{proof}
On a flat component $\HH\equiv0$, so $F\equiv0$.  We therefore work on the slice $r=1$ of one connected nonflat component.

Thanks to Lemma~\ref{lem:simple-largest-eigenvalue-inequality}, it only remains to consider a nonsimple point $x_0$ (with $\lambda_M = \lambda_m$), which, after a rotation, we assume is $e_1 = (1, 0, 0)$. In the two-dimensional nonradial plane,
\[
  \HH|_{e_1^\perp}=\lambda_M g_{\mathbb{S}^2}, \qquad T=2\lambda_M, \qquad \lambda_M>0 .
\]
Let $\varphi\in C^2$ touch $F=\lambda_M^p$ from above at $e_1$.  After shrinking the neighborhood we may assume $\varphi>0$, and we set
\[
  \sigma=\varphi^{1/p}.
\]
Then $\sigma\ge\lambda_{\max}(\HH|_{x^\perp})$, with equality at $e_1$. Since the nonradial plane is two-dimensional,
\[
  T\le 2\lambda_{\max}(\HH|_{x^\perp})\le 2\sigma,
\]
and equality holds at $e_1$.  Hence
\[
  w:=2\sigma-T
\]
has a local minimum zero at $e_1$.  With
\[
  \Lin_{h,0}:=\Jh-r^2Q,
\]
the minimum principle gives
\[
  0\le \Lin_{h,0}w(e_1) =2\Jh\sigma(e_1)-\Jh T(e_1),
\]
because $w(e_1)=0$.  Thus
\[
  \Jh\sigma(e_1)\ge \frac12\Jh T(e_1).
\]
At the nonsimple point, $\HH|_{x^\perp}=\lambda_M g$.  By \eqref{eq:moving-trace-correction} and Lemma~\ref{lem:tensor-identities},
\[
\begin{split}
  \Jh T
  &=\Jh(\mathcal A_{22})+\Jh(\mathcal A_{33})+2T   =2(2+\apalpha)\lambda_M-\frac{2\apalpha}{v}\lambda_M^2
    +\frac{2(2+\apalpha)\lambda_M}{v^2}|\grad_\perp v|^2 ,
\end{split}
\]
where $|\nabla_\perp v|^2 = v_2^2 + v_3^2$. Therefore
\[
  \Jh\sigma(e_1) \ge (2+\apalpha)\lambda_M-\frac{\apalpha}{v}\lambda_M^2 +\frac{(2+\apalpha)\lambda_M}{v^2}|\grad_\perp v|^2 .
\]
Moreover, since $2\sigma-T$ has a minimum zero at $e_1$,
\[
  2\grad\sigma=\grad T .
\]
The trace-gradient identity in Lemma~\ref{lem:tensor-identities} gives
\[
  \grad T =-\frac1v\bigl(\apalpha\HH(\grad_\perp v,\cdot) +T\grad_\perp v\bigr) =-\frac{(2+\apalpha)\lambda_M}{v}\grad_\perp v,
\]
and hence
\[
  |\grad\sigma|^2 = \frac{(2+\apalpha)^2\lambda_M^2}{4v^2}|\grad_\perp v|^2 .
\]
Using the chain rule for $\varphi=\sigma^p$ (as in \eqref{eq:jh-lambda-max-chain-rule}), and using $ Q=T/v=2\lambda_M/v$ at $e_1$, we obtain
\begin{equation}
\label{eq:nonsimple-no-discard}
\begin{split}
\frac{(\Jh-p(2+\apalpha))\varphi}{\lambda_M^p}(e_1)
&\ge
\frac{(2-p(2+\apalpha))\lambda_M}{v}   +
\frac{p(2+\apalpha)}{v^2}
\left(
  1-\frac{(1-p)(2+\apalpha)}4
\right)
|\grad_\perp v|^2 .
\end{split}
\end{equation}
The first term is nonnegative because $p\le 1/(2+\apalpha)$.  The second coefficient is nonnegative because
\[
  p\ge \frac{\apalpha}{2+\apalpha} \quad\Longrightarrow\quad (1-p)(2+\apalpha)\le 2.
\]
Thus, the viscosity inequality also holds at nonsimple points as well. Since the operator is locally smooth and the function being tested is Lipschitz, standard arguments yield the distributional inequality as well (see, e.g., \cite{Ishii1995WeakViscosityDistributionSolutions}).
\end{proof}
\begin{remark}[Endpoint case]
\label{rem:endpoint-case}
In the case $\gamma = 2/3$ (so $\apalpha = 1$ and $p = 1/3$), set $F=\lambda_M^{1/3}$. It will be useful to write the more refined inequality
\begin{equation}\label{eq:endpoint-largest-defect-with-R}
  \Jh F-F\ge \mathcal R
  \qquad\text{in }\mathcal V,
\end{equation}
in the distributional sense, where the degree-zero function $\mathcal R$ is given by
\begin{equation}\label{eq:endpoint-largest-R-definition}
  \mathcal R:=\frac{F}{3\psi}(2T-\lambda_M)>0\qquad\text{on}\quad \mathcal{V}_S,
\end{equation}
thanks to Proposition~\ref{prop:pinching-inequality} (recall $ v = r\psi$). 
This is a consequence of the proofs of 
Lemmas~\ref{lem:simple-largest-eigenvalue-inequality} and \ref{lem:largest-eigenvalue-no-negative-defect} by not discarding the first term in the inequalities \eqref{eq:simple-no-discard}-\eqref{eq:nonsimple-no-discard}.  
\end{remark}

This gives the homogeneous test function needed for the stability reduction.

\begin{prop} 
\label{prop:largest-eigenvalue-test}
In the setting of Lemma \ref{lem:interior-trace-zero-flatness}, let $\Lambda=\lamax(D^2v)=\lambda_M/r$ and $G=\Lambda^p$.  If
\[
  \frac{\apalpha}{2+\apalpha}\le p\le\frac{1}{2+\apalpha},
\]
then we have that
\[
  \Lin G\ge p(1+p)\frac{G}{r^2}
\]
holds locally in the distributional sense in $\Om\setminus\{0\}$.
\end{prop}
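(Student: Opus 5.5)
The plan is to reduce everything to Lemma~\ref{lem:largest-eigenvalue-no-negative-defect} by separating the radial and angular parts of $G$. By Definition~\ref{def:homogeneous-hessian}, $\Lambda=\lambda_M/r$, so
\[
  G=\Lambda^p=r^{-p}F,\qquad F=\lambda_M^p,
\]
with $F$ homogeneous of degree zero. On flat components $\HH\equiv0$, hence $G\equiv0$ and there is nothing to prove, so I would work on a nonflat component $\mathcal V$. There, by Lemma~\ref{lem:interior-trace-zero-flatness}, $\lambda_M>0$, so $F$ is positive and locally Lipschitz.

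\textbf{Step 1: a formal identity for smooth $F$.} For a smooth degree-zero $F$, I would expand
\[
  \Lin(r^{-p}F)=r^{-p}\Lin F+F\,\Delta(r^{-p})+2\grad(r^{-p})\cdot\grad F+\apalpha F\,\frac{\grad v}{v}\cdot\grad(r^{-p}),
\]
where the zero-order term $Q$ is already contained in $r^{-p}\Lin F$. The three remaining terms are handled as follows.
\begin{itemize}
  \item Since $F$ is $0$-homogeneous, $x\cdot\grad F=0$, so the cross term vanishes.
  \item In $\R^3$, $\Delta r^{-p}=p(p-1)r^{-p-2}$.
  \item Euler's identity $x\cdot\grad v=v$ turns the drift term into $-p\apalpha r^{-p-2}F$.
\end{itemize}
This gives
\[
  r^{p+2}\Lin G=\Jh F+\bigl(p(p-1)-p\apalpha\bigr)F .
\]

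\textbf{Step 2: insert the lemma.} Using $\Jh F\ge p(2+\apalpha)F$ from Lemma~\ref{lem:largest-eigenvalue-no-negative-defect}, I obtain
\[
  r^{p+2}\Lin G\ge \bigl(p(2+\apalpha)+p^2-p-p\apalpha\bigr)F=p(1+p)F,
\]
which is exactly $\Lin G\ge p(1+p)G/r^2$.

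\textbf{Step 3: make the argument rigorous for nonsmooth $F$.} Here $F$ is only Lipschitz, and the inequality for $F$ holds only in the viscosity (and distributional) sense. I would carry out Step 1 at the level of test functions.
\begin{itemize}
  \item \emph{Viscosity inequality for $G$.} If $\phi\in C^2$ touches $G$ from above at $x_0\neq0$, then $\tilde\phi:=r^{p}\phi$ touches $F$ from above at $x_0$.
  \item \emph{Reducing to a degree-zero test.} To apply the degree-zero viscosity statement, I would replace $\tilde\phi$ by its $0$-homogeneous extension $\hat\phi(x)=\tilde\phi(x_0|x|/|x_0|\cdot x/|x|)$, i.e.\ its restriction to the sphere through $x_0$, extended radially. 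Because $F$ is $0$-homogeneous, $\hat\phi$ still touches $F$ from above, now as a degree-zero function. The identity of Step 1 holds pointwise for smooth functions; at the touching point the radial derivatives match up to terms that only help the inequality, since $\partial_r$ of $\tilde\phi - \hat\phi$ vanishes and its second radial derivative is nonnegative.
  \item \emph{Sign of the radial terms.} This last point needs care. The main obstacle is checking that the radial second-order and first-order contributions of $\tilde\phi$ enter with favourable sign. I would try first to argue directly in distributional form instead: test the distributional inequality for $G$ against $\zeta$, write $\zeta r^{-p}$ as a test function for $F$, and verify that the extra terms integrate to exactly $(p(p-1)-p\apalpha)\int v^\apalpha F\zeta r^{-p-2}$. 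This uses integration by parts in $r$ with the weight $v^\apalpha$, which is homogeneous of degree $\apalpha$, and the fact that $F$ has no radial derivative.
\end{itemize}
This weak computation is routine and avoids viscosity subtleties. I expect it to be the cleanest route, with the only bookkeeping being the radial integration by parts.
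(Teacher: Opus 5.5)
Your proposal is correct and follows the paper's proof: the paper uses exactly the identity $\Lin(r^{-p}F)=r^{-p-2}\{\Jh F-p(1+\apalpha-p)F\}$ (your coefficient $p(p-1)-p\apalpha$) combined with Lemma~\ref{lem:largest-eigenvalue-no-negative-defect}. Your Step~3 weak computation is a correct explicit version of the ``standard local approximation'' the paper invokes: it tests the inequality for $F$ against $\xi=r^{-p}\zeta$, uses $x\cdot\grad F=0$ a.e., and integrates $\int v^\apalpha F\,\grad r^{-p}\cdot\grad\zeta$ by parts. Keep that distributional route and drop the viscosity detour.
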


\begin{proof}
On flat components the claim is trivial.  On a nonflat component, set $F=\lambda_M^p$.  By Lemma~\ref{lem:largest-eigenvalue-no-negative-defect},
\[
  \Jh F\ge p(2+\apalpha)F
\]
locally in the distributional sense.  For a smooth degree-zero function $F$, $x\cdot \nabla F = 0$ and hence
\[
  \Lin(r^{-p}F) =r^{-p-2}\{\Jh F-p(1+\apalpha-p)F\}.
\]
Applying this identity distributionally to $F=\lambda_M^p$, by a standard local approximation gives the desired result.
\end{proof}

We finally combine this test with the Hardy obstruction.

\begin{prop} 
\label{prop:nonflat-stability-contradiction}
Let $n=3$, $\gamma\in(0,2/3)$. Every classical stable cone satisfies $\HH\equiv0$.
\end{prop}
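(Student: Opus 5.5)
We argue by contradiction: assume some connected component $\mathcal V$ of $\Om=\{v>0\}$ is nonflat, so that $\HH\not\equiv0$ on $\mathcal V$. If $\partial\Om\setminus\{0\}=\varnothing$, the cone has full support. Proposition~\ref{prop:full-support-exclusion} with $n=3$ then requires $2\le(1+\apalpha)^2/4$, i.e.\ $\apalpha\ge 2\sqrt2-1>1$, which is impossible since $\apalpha<1$. So we may assume the setting of Lemma~\ref{lem:interior-trace-zero-flatness}. There $\lambda_M>0$ on $\overline{\mathcal V}$, and in particular $\lambda_M$ is bounded away from zero on the compact closure $\overline{\mathcal V_S}$, because it extends continuously to the boundary with value $h>0$.

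Next we choose $p$. Since $0<\apalpha<1$, we have $\lambda_\apalpha=(1+\apalpha)^2/4<1$, and also $\apalpha\le\lambda_\apalpha$ because $(1-\apalpha)^2\ge0$. The window
\[
  \frac{\lambda_\apalpha}{2+\apalpha}<p<\frac{1}{2+\apalpha}
\]
is therefore nonempty, and it lies inside the admissible range $[\apalpha/(2+\apalpha),1/(2+\apalpha)]$ of Proposition~\ref{prop:largest-eigenvalue-test}. We define $G=\Lambda^p=r^{-p}\lambda_M^p$ on $\mathcal V$ and $G=0$ on all other components. This function is nonnegative, not identically zero, and homogeneous of degree $-p$. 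It is locally Lipschitz up to $\partial\Om\setminus\{0\}$: $\lambda_M$ is Lipschitz, being the largest eigenvalue of the smooth matrix field $\HH|_{x^\perp}$, which is smooth up to the free boundary by Proposition~\ref{prop:higher-regularity-boundary-calculus}, and $t\mapsto t^p$ is smooth on the range of $\lambda_M$, which is bounded away from $0$. Proposition~\ref{prop:largest-eigenvalue-test} gives $\Lin G\ge p(1+p)G/r^2$ distributionally in $\Om$; componentwise this suffices, since the components are open and disjoint.

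Proposition~\ref{prop:stability-hardy-reduction} with $n=3$ and $\mu=p(1+p)$ now yields
\[
  p(1+p)\le\lambda_{\apalpha,p}=\frac{(1+\apalpha-2p)^2}{4}=\lambda_\apalpha-p(1+\apalpha)+p^2,
\]
which is equivalent to $p(2+\apalpha)\le\lambda_\apalpha$. This contradicts the choice $p>\lambda_\apalpha/(2+\apalpha)$. Hence every component is flat, i.e.\ $\HH\equiv0$.

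The main point requiring care is checking that $G$ meets the hypotheses of the Hardy reduction. The key issue is the Lipschitz regularity of $\lambda_M^p$ up to the free boundary, and this rests on the boundary limit $\lambda_M\to h>0$. We must also check that the distributional inequality, which was proved only locally in the interior, is in exactly the form used in Step~1 of Proposition~\ref{prop:stability-hardy-reduction}. That step only tests against functions compactly supported in $\Om$, so this is consistent. The arithmetic is straightforward once $p$ is chosen in the window.
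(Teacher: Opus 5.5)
Your proposal is correct and follows the paper's proof step for step. You exclude full support via Proposition~\ref{prop:full-support-exclusion}, pick $p\in(\lambda_\apalpha/(2+\apalpha),1/(2+\apalpha))$ inside the admissible range, apply Proposition~\ref{prop:largest-eigenvalue-test} to $G=\Lambda^p$, and let Proposition~\ref{prop:stability-hardy-reduction} force $p(2+\apalpha)\le\lambda_\apalpha$. Your extra justifications fill in details the paper leaves implicit: the explicit bound $\apalpha\ge 2\sqrt2-1$ in the full-support case, the inequality $\apalpha\le\lambda_\apalpha$, setting $G=0$ off $\mathcal V$, and the Lipschitz regularity of $\lambda_M^p$ from $\lambda_M\ge c>0$ on $\overline{\mathcal V_S}$ together with smoothness of $v$ up to the free boundary.
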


\begin{proof}
If $\Omega = \R^3 \setminus\{0\}$, it follows from Proposition~\ref{prop:full-support-exclusion}. Suppose then $\partial\Omega_S\neq\varnothing$. Since $0<\apalpha<1$,
\[
  \lambda_\apalpha=\frac{(1+\apalpha)^2}{4}<1.
\]
Choose (recalling the condition on $p$ from Lemma~\ref{lem:largest-eigenvalue-no-negative-defect})
\[
  \frac{\apalpha}{2+\apalpha} < \frac{\lambda_\apalpha}{2+\apalpha}<p<\frac{1}{2+\apalpha}.
\]
Assume, toward a contradiction, that $\HH\not\equiv0$, and choose a nonflat connected component.  By Lemma~\ref{lem:interior-trace-zero-flatness}, $\lambda_M>0$ on that component (up to the boundary), so $\Lambda=\lambda_M/r$ is nonzero there.  Set $G=\Lambda^p$.  By Proposition~\ref{prop:largest-eigenvalue-test},
\[
  \Lin G\ge p(1+p)\frac{G}{r^2}
\]
holds in the distributional sense. But then, Proposition~\ref{prop:stability-hardy-reduction} (notice that $G$ is positive and  Lipschitz up to the boundary) forces
\[
  p(1+p)\le\lambda_{\apalpha,p}.
\]
This is a contradiction, since
\[
  p(1+p)>\lambda_{\apalpha,p} \quad\Longleftrightarrow\quad p(2+\apalpha)>\lambda_\apalpha.
\]
Hence, no nonflat component exists, and $\HH\equiv0$. Since $v$ is a classical cone, it must be a half-space solution.
\end{proof}

Let us now deal with the endpoint case, corresponding to $\gamma = 2/3$, equivalently $\alpha = 1$.

\begin{prop} 
\label{prop:endpoint-largest-eigenvalue-contradiction}
Let $n=3$ and $\gamma= 2/3$. Every classical stable cone satisfies $\HH\equiv0$.
\end{prop}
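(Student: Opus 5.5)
At the endpoint $\apalpha=1$ the admissible range of Lemma~\ref{lem:largest-eigenvalue-no-negative-defect} degenerates to the single value $p=1/3$. With this choice the Hardy comparison is exactly borderline:
\[
p(1+p)=\tfrac49=\lambda_{1,1/3}=\frac{(2-2/3)^2}{4}.
\]
So Proposition~\ref{prop:stability-hardy-reduction} alone gives no contradiction. The plan is to keep the positive remainder $\mathcal R$ from Remark~\ref{rem:endpoint-case} and show that it breaks the equality. We argue by contradiction: assume $\HH\not\equiv0$. The full-support case is excluded by Proposition~\ref{prop:full-support-exclusion}, since $\gamma_3=2(2\sqrt2-1)/(2\sqrt2+1)>2/3$. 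So we may fix a nonflat component $\mathcal V$ with nonempty free boundary, and set $F=\lambda_M^{1/3}$ and $G=r^{-1/3}F$. By Lemma~\ref{lem:interior-trace-zero-flatness}, $F$ is positive and Lipschitz up to $\partial\mathcal V\setminus\{0\}$.

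\textbf{Step 1: a refined subsolution inequality.} Arguing as in Proposition~\ref{prop:largest-eigenvalue-test}, we use \eqref{eq:endpoint-largest-defect-with-R} to get, distributionally in $\mathcal V$,
\[
\Lin G\ge \frac49\frac{G}{r^2}+\frac{r^{-1/3}\mathcal R}{r^2}.
\]
This uses $\Jh F - p(1+\apalpha-p)F=\Jh F-\tfrac59F$ and $\tfrac59+\tfrac49=1$.

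\textbf{Step 2: the Hardy argument with the remainder kept.} We repeat Steps 1–2 of the proof of Proposition~\ref{prop:stability-hardy-reduction} with the test function $\eta(r)G$. The cutoff justification near $\partial\mathcal V$ goes through unchanged once we check that $v^\apalpha\,r^{-1/3}\mathcal R\,G$ is locally integrable. Near the boundary $\mathcal R\sim F\lambda_M/\psi$, and $v^1/\psi=r$ is bounded, so this holds. We obtain
\[
\frac49 A\int_0^\infty r^{4/3}\eta^2\,dr + B\int_0^\infty r^{4/3}\eta^2\,dr\le A\int_0^\infty r^{10/3}|\eta'|^2\,dr,
\]
where
\[
A=\int_{\mathcal V_S}\psi F^2\,d\omega>0,\qquad B=\int_{\mathcal V_S}\psi\, F\,\mathcal R\,d\omega=\frac13\int_{\mathcal V_S}F^2(2T-\lambda_M)\,d\omega .
\]
Both integrals are finite, since $F$ and $T$ are bounded on the compact closure. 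Proposition~\ref{prop:pinching-inequality} gives $2T-\lambda_M>0$ in $\mathcal V_S$, so $B>0$. The exponents are $n-1+\apalpha-2p=10/3$ and $n-3+\apalpha-2p=4/3$. We then take the infimum over $\eta$; the sharp 1D Hardy constant for these weights is $(4/3)^2/4=4/9$. This yields $\tfrac49 A+B\le\tfrac49A$, contradicting $B>0$.

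\textbf{Conclusion and main obstacle.} Hence $\HH\equiv0$. The step I expect to need the most care is Step 1, namely justifying the refined inequality \eqref{eq:endpoint-largest-defect-with-R} in the distributional sense with the remainder retained, including at nonsimple points. There the viscosity argument of Lemma~\ref{lem:largest-eigenvalue-no-negative-defect} must keep the term $(2-p(2+\apalpha))\lambda_M/v=\lambda_M/v$ from \eqref{eq:nonsimple-no-discard}. At such points $T=2\lambda_M$, so this term equals $F(2T-\lambda_M)/(3\psi)\cdot$(normalization), consistent with $\mathcal R$. The passage from viscosity to distributional inequalities with a continuous positive right-hand side is then standard, as in \cite{Ishii1995WeakViscosityDistributionSolutions}. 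Once this is in place, the Hardy step is a straightforward variant of Proposition~\ref{prop:stability-hardy-reduction}.
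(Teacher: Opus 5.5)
Your proposal is correct and is essentially the paper's proof. You keep the remainder $\mathcal R$ of Remark~\ref{rem:endpoint-case} in $\Lin G\ge \frac49 G/r^2+r^{-7/3}\mathcal R$ and use its positivity, which comes from the pinching inequality, to strictly beat the borderline Hardy constant $4/9$. The only cosmetic difference is that the paper localizes the remainder with a cutoff $\vartheta\in C_c^\infty(\mathcal V_S)$, while you instead check that $v\mathcal R=\frac{rF}{3}(2T-\lambda_M)$ is bounded near $\partial\mathcal V$; both work.

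Fix one arithmetic slip. The radial exponents are $n-1+\apalpha-2p=7/3$ and $n-3+\apalpha-2p=1/3$, not $10/3$ and $4/3$. It is for these weights that the sharp constant is $(4/3)^2/4=4/9$; for $r^{10/3}$ against $r^{4/3}$ it would be $49/36$, and no contradiction would follow.
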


\begin{proof}
If $\Omega = \R^3 \setminus\{0\}$, it follows from Proposition~\ref{prop:full-support-exclusion}. Suppose then $\partial \Omega_S\neq\varnothing$. 

Let us argue by contradiction. Assume it is not true, and choose a nonflat connected component $\mathcal V$. Let us denote 
\[
  F:=\lambda_M^{1/3},
  \qquad
  G:=r^{-1/3}F=\left(\frac{\lambda_M}{r}\right)^{1/3}=\Lambda^{1/3}
\]
in $\Omega$. As in Proposition~\ref{prop:largest-eigenvalue-test} and thanks to Remark~\ref{rem:endpoint-case}, locally on the cone over $\mathcal V_S$,
\begin{equation}\label{eq:endpoint-largest-LG-with-R}
  \Lin G
  \ge
  \frac49\frac{G}{r^2}+r^{-7/3}\mathcal R(\omega)\ge \frac49\frac{G}{r^2}+r^{-7/3}\vartheta(\omega)\mathcal R(\omega)
\end{equation}
distributionally in $\Omega$,  where we have chosen some $\vartheta\in C_c^\infty(\mathcal V_S)$ with $0\le\vartheta\le1$ and
$\vartheta\not\equiv0$, and $\mathcal R$ is given by \eqref{eq:endpoint-largest-R-definition}. In particular, since $\mathcal R>0$, $F>0$,
and $\psi>0$ on $\supp\vartheta$ (recall $v = r\psi$), 
\begin{equation}\label{eq:endpoint-largest-Btheta}
  B_\vartheta:=\int_{\mathcal V_S}\psi F\vartheta\mathcal R\,d\omega> 0. 
\end{equation}

We now repeat the stability computation of Step 2 in the proof of 
Proposition~\ref{prop:stability-hardy-reduction}, using
\eqref{eq:endpoint-largest-LG-with-R}.   For the additional term, no boundary limit has
to be estimated: on every fixed annulus the cone over $\supp\vartheta$ stays a
positive distance from $\partial\Omega$.  We obtain
\begin{equation}\label{eq:endpoint-before-radial-hardy-largest}
  \frac49A_G\int_0^\infty r^{1/3}\eta^2\,dr
  +B_\vartheta\int_0^\infty r^{1/3}\eta^2\,dr
  \le
  A_G\int_0^\infty r^{7/3}|\eta'|^2\,dr,
\end{equation}
where
$
  A_G:=\int_{\Omega_S}\psi F^2\,d\omega>0
$.

The sharp one-dimensional Hardy constant for these radial weights is
\begin{equation}\label{eq:endpoint-radial-hardy-largest}
  \inf_{\eta\in C_c^\infty((0,\infty))\setminus\{0\}}
  \frac{\displaystyle\int_0^\infty r^{7/3}|\eta'|^2\,dr}
       {\displaystyle\int_0^\infty r^{1/3}\eta^2\,dr}
  =\frac49 .
\end{equation}
Taking a minimizing sequence in \eqref{eq:endpoint-before-radial-hardy-largest} gives
\[
  \frac49A_G+B_\vartheta\le \frac49A_G,
\]
contradicting $B_\vartheta>0$.  Hence, no nonflat component exists, and
$\HH\equiv0$.
\end{proof}

We finally have:
\begin{proof}[Proof of Theorem~\ref{thm:stable-homogeneous-cone-flatness}]
It follows from Proposition~\ref{prop:nonflat-stability-contradiction} or Proposition~\ref{prop:endpoint-largest-eigenvalue-contradiction}.
\end{proof}

As well as:
\begin{proof}[Proof of Corollary~\ref{cor:free-boundary-regularity}]
This is a classical consequence of   Theorem~\ref{thm:stable-homogeneous-cone-flatness} (see \cite{Phillips1983MinimizationFreeBoundary, AltPhillips1986FreeBoundarySemilinear, DeSilvaSavin2021DegenerateOnePhase,Bonorino2001RegularityFreeBoundaryII, RestrepoRosOton2025CinftyRegularitySemilinear}). The bound on the size of the singular set is given in \cite[Appendix A]{FernandezRealYu2023GenericPropertiesFreeBoundary}.
\end{proof}

\bibliographystyle{abbrv}
\bibliography{flat_alt_phillips_refs}

\end{document}